\newcommand\pf{\begin{proof}}
\newcommand\epf{\end{proof}}
\newcommand\Gr{{G}}
\newcommand\Gal{\operatorname{Gal}}
\newcommand\End{\operatorname{End}}
\newcommand\Hom{\operatorname{Hom}}
\newcommand\Aut{\operatorname{Aut}}
\newcommand\Inn{\operatorname{Inn}}
\newcommand\Int{\operatorname{Int}}
\newcommand\Ker{\operatorname{Ker}}
\newcommand\Reg{\operatorname{Reg}}
\newcommand\Lie{\operatorname{Lie}}
\renewcommand\Im{\operatorname{Im}}
\newcommand\CoInn{\operatorname{CoInn}}
\newcommand\CoInt{\operatorname{CoInt}}
\newcommand\id{{\operatorname{id}}}
\newcommand\ad{{\operatorname{ad}}}
\newcommand\op{{\operatorname{op}}}
\newcommand\cop{{\operatorname{cop}}}
\newcommand\tw{{\operatorname{tw}}}
\renewcommand\bigcup{{\operatorname{colim}\ }}
\newcommand\GL{{\operatorname{GL}}}
\newcommand\SL{{\operatorname{SL}}}
\newcommand\PSL{{\operatorname{PSL}}}
\newcommand\ZZ{\mathbb Z}
\renewcommand\H{\mathrm{H}}
\newcommand\OO{\mathcal O}
\newcommand\eps{\varepsilon}
\newcommand{\kg}{k[G]}
\newcommand{\Z}{\mathbb{Z}}
\newcommand{\C}{\mathbb{C}}
\newcommand{\F}{\mathbb{F}}
\newcommand{\B}{\mathcal{B}}
\newtheoremstyle{pedro}{}{}{\itshape}{}{\sc}{~--}{ }{\thmname{#1}\thmnumber{ #2}\thmnote{ (#3)}}
\newtheoremstyle{pedroex}{}{}{}{}{\sc}{~--}{ }{\thmname{#1}\thmnumber{ #2}\thmnote{ (#3)}}
\theoremstyle{pedro}
\newtheorem{lem}{Lemma}[section]
\newtheorem{thm}[lem]{Theorem}
\newtheorem{prop}[lem]{Proposition}
\newtheorem{coro}[lem]{Corollary}
\newtheorem{Lem}[lem]{Lemma}
\newtheorem{Prop}[lem]{Proposition}
\newtheorem{Thm}[lem]{Theorem}
\newtheorem{Def}[lem]{Definition}
\theoremstyle{remark}
\theoremstyle{pedroex}
\newtheorem{Rem}[lem]{Remark}
\newtheorem{Expl}[lem]{Example}
\newtheorem{Not}[lem]{Notation}
\numberwithin{equation}{section}
\title[Invariant Drinfeld twists]
{Cohomology of invariant Drinfeld twists\\
on group algebras}
\author{Pierre Guillot and Christian Kassel}
\address{
Universit\'{e} de Strasbourg \& CNRS\\
Institut de Recherche Math\'{e}matique Avanc\'{e}e\\
7~Rue Ren\'{e} Descartes\\
67084 Strasbourg, France}
\email{guillot@math.u-strasbg.fr, kassel@math.u-strasbg.fr}
\urladdr{www-irma.u-strasbg.fr/\raise-2pt\hbox{\~{}}guillot/,
www-irma.u-strasbg.fr/\raise-2pt\hbox{\~{}}kassel/}
\keywords{Finite group, cohomology, Hopf algebra, Drinfeld twist}
\subjclass[2000]{Primary 
(20D25, 
20J06, 
16S34, 
16W30)
}
\begin{document}

\begin{abstract}
We show how to compute a certain group~$\H^2_{\ell}(G)$ 
of equivalence classes of invariant Drinfeld twists on the algebra
of a finite group~$G$ over a field~$k$ of characteristic zero.
This group is naturally isomorphic to
the second lazy cohomology group $\H_{\ell}^2(\OO_k(G))$
of the Hopf algebra~$\OO_k(G)$ of $k$-valued functions on~$G$.
When~$k$ is algebraically closed, the answer involves
the group of outer automorphisms of~$G$ induced by conjugation in the group algebra
as well as the set of all pairs~$(A, b)$, 
where $A$ is an abelian normal subgroup of~$G$ and 
$b : \widehat{A} \times \widehat{A} \to k^{\times}$ is a $k^\times$-valued
$G$-invariant non-degenerate alternating bilinear form on the dual~$\widehat{A}$.
When the ground field~$k$ is not algebraically closed, 
we use algebraic group techniques
to reduce the computation of~$\H_{\ell}^2(G)$ 
to a computation over the algebraic closure.
As an application of our results, we compute $\H^2_{\ell}(G)$ for a number of groups.
\end{abstract}

\maketitle

\section*{Introduction}

The aim of this paper is to show how to compute a certain group~$\H^2_{\ell}(G)$ 
of equivalence classes of invariant Drinfeld twists on the group algebra
of a finite group~$G$. 
Drinfeld twists were introduced by Drinfeld in his work~\cite{Dr} on quasi-Hopf algebras.
They were used by Etingof and Gelaki~\cite{EG}
to classify the triangular semisimple cosemisimple Hopf algebras.
There is an abundant literature on the subject, starting
with Movshev's article~\cite{Mov} and including several papers by Davydov, Etingof, Gelaki and others;
see \cite{AEGN, Da1, Da2, Da3, EG0, EG1, EG, EG2, Ge}.

There are mainly two reasons why we restrict to \emph{invariant} Drinfeld twists:

(a) firstly, invariant twists form a group under multiplication 
whereas general twists do not;

(b) secondly, the group~$\H^2_{\ell}(G)$ we consider
identifies naturally with the second \emph{lazy cohomology} group~$\H_{\ell}^2(\OO_k(G))$
of the Hopf algebra~$\OO_k(G)$ of $k$-valued functions on~$G$:
\[
\H^2_{\ell}(G) \cong \H_{\ell}^2(\OO_k(G)) \, .
\]

The lazy cohomology group~$\H_{\ell}^2(H)$ was defined by Schauenburg~\cite{Scb}
for any Hopf algebra~$H$; 
it was systematically investigated by Bichon and Carnovale in~\cite{BC}.
The group~$\H_{\ell}^2(H)$ has been used to compute the Brauer group of a Hopf algebra
and to equip the category of projective representations of~$H$ 
with the structure of a crossed $\pi$-category in the sense of Turaev~\cite{Tu}.

When $H$ is a cocommutative Hopf algebra, then $\H_{\ell}^2(H)$
coincides with the Hopf algebra cohomology group~$H^2(H,k)$ 
introduced by Sweedler in~\cite{Sw1}.
In the special case where $H = k[G]$ is a group algebra,
$\H_{\ell}^2(H)$ is isomorphic to the cohomology group~$H^2(G,k^{\times})$
of the group~$G$ acting trivially on~$k^{\times}$.
There are very few examples of non-cocommutative Hopf algebras 
for which the lazy cohomology has been determined.
So the computation of the lazy cohomology of the function algebras~$\OO_k(G)$, 
which are in general not cocommutative, was our prime motivation. 
We also felt that such a computation might lead to a better understanding
of the corresponding ``versal torsors" constructed in~\cite{AK}.

Let us summarize our results. 
Given a group~$G$, let $\Int_k(G)$ be the group of automorphisms of~$G$ 
induced by conjugation in the group algebra~$k[G]$; 
this group contains the group~$\Inn(G)$ of inner automorphisms of~$G$.
We first show that the quotient group $\Int_k(G)/\Inn(G)$ embeds into~$\H^2_{\ell}(G)$.
As a consequence, there are finite groups~$G$ for which 
$\H^2_{\ell}(G)$ is non-abelian. This settles an open question:
there are indeed (finite-dimensional) Hopf algebras~$H$ such that
the lazy cohomology group~$\H_{\ell}^2(H)$ is \emph{not} abelian.

Next, let~$\B(G)$ be the set of pairs~$(A, b)$, 
where $A$ is an abelian normal subgroup of~$G$ and 
$b : \widehat{A} \times \widehat{A} \to k^{\times}$ is a $k^\times$-valued
$G$-invariant non-degenerate alternating bilinear form 
on the Pontryagin dual~$\widehat{A}$ of~$A$. 
When the ground field~$k$ is algebraically closed and
of characteristic zero, we construct a map
\[
\Theta : \H^2_{\ell}(G) \to \B(G)
\]
whose fibres are the left cosets of~$\Int_k(G)/\Inn(G)$; 
we show that this map is surjective if in addition the order of~$G$ is odd.
In case $\B(G)$ is trivial, we have
\[
\H^2_{\ell}(G) \cong \Int_k(G)/\Inn(G) \, .
\]
We also identify the set $\B(G)$ with a colimit (in the category of sets)
of~$H^2(\widehat{A},k^{\times})^G$
over all abelian normal subgroups~$A$ of~$G$.
As an application, if $G$ is a group of odd order with $\Int_k(G)=\Inn(G)$ and with a unique 
maximal abelian normal subgroup~$A$, then
$\H^2_{\ell}(G)$ has the following simple expression:
\[
\H^2_{\ell}(G) \cong H^2(\widehat{A},k^{\times})^G \, .
\]
Note that although these results can be expressed purely in group-theoretical terms,
their proofs rely on techniques originating in the theory of quantum groups.

We use the previous results to compute~$\H^2_{\ell}(G)$ in a number of examples.
We first give a list of groups for which $\H_{\ell}^2(G)$ is trivial:
this includes all simple groups and all symmetric groups.
We also exhibit groups for which $\H^2_{\ell}(G)$ is non-trivial:
if $G$ is the wreath product $\Z/p \wr \Z/p$ with $p$ an odd prime, then 
$\H_{\ell}^2(G)$ is an elementary abelian $p$-group of rank~$(p-1)/2$.
For the alternating group~$A_4$, we have $\H^2_{\ell}(A_4) \cong \ZZ/2$
and we give explicitly an invariant twist generating this group.
We also consider a group of order~$32$ with non-trivial $\Int_k(G) / \Inn(G)$;
we write down an invariant twist generating the subgroup of~$\H^2_{\ell}(G)$
coming from $\Int_k(G) / \Inn(G)$.

We also present a general sufficient condition for $\H^2_{\ell}(G)$ to be abelian, namely
it is so if the tensor product of any two irreducible representations
of~$G$ is a multiplicity-free direct sum of irreducible representations.

When the ground field~$k$ is not algebraically closed, 
we use Lie algebra techniques to compare the group $\H^2_{\ell}(G/k) = \H^2_{\ell}(G)$ 
to the group~$\H^2_{\ell}(G/\bar k)$ of gauge equivalence classes of invariant Drinfeld twists 
with coefficients in the algebraic closure~$\bar k$ of~$k$. When $k$ is large enough, we 
obtain an exact sequence of the form
\[
1 \longrightarrow H^1(k, Z(G)) \longrightarrow \H^2_{\ell}(G/k) 
\longrightarrow \H^2_{\ell}(G/\bar k) \longrightarrow 1 \, ,
\]
where $H^1(k, Z(G))$ is the first Galois cohomology of~$k$
with coefficients in the centre of~$G$.
This reduces the computation of~$\H_{\ell}^2(G/k)$ over an arbitrary field 
to a computation over its algebraic closure.

The paper is organized as follows.
In Section~\ref{inv-twist} we recall the definition of Drinfeld twists and
we define what we call the twist cohomology of a Hopf algebra~$H$; when $H$
is finite-dimensional we show that the twist cohomology of~$H$ is isomorphic
to the lazy cohomology of the dual Hopf algebra.

In Section~\ref{grings} we apply the content of the previous section to the case when
$H$ is the algebra of a finite group~$G$. 
We prove that $\Int_k(G)/\Inn(G)$ embeds into~$\H^2_{\ell}(G)$,
from which we deduce that $\H^2_{\ell}(G)$ is not always abelian.

In Section~\ref{abelian} we deal with the case where $G$ is a finite abelian group and 
compute $\H_{\ell}^2(G)$ in terms of alternating bilinear forms.
We also show that any abelian group with a non-degenerate alternating bilinear form
is of the form~$A \times A$.

In Section~\ref{sec_support} we present our main results involving the map~$\Theta$.
We give its main properties and identify $\B(G)$ with a colimit.

In Section~\ref{sufficient} we exploit the semi-simplicity of~$k[G]$
to deduce the above-mentioned 
condition that ensures that $\H_{\ell}^2(G)$ is abelian.

In Section~\ref{sec_rationality} we remark that the lazy cohomology of~$\H_{\ell}^2(G)$
can be computed in terms of algebraic groups. 
We determine the corresponding Lie algebras and establish the exact sequence above.

In Section~\ref{examples} we use our main theorem to 
compute $\H_{\ell}^2(G)$ for a number of specific groups.

\subsection*{Convention}

Throughout the paper, we fix a field~$k$ over which all
our constructions are defined. 
In particular, all linear maps are supposed to be $k$-linear
and unadorned tensor products mean tensor products over~$k$.

For simplicity, we assume that the ground field~$k$ is of 
\emph{characteristic zero}. 
The reader can check that all our results hold, 
except possibly those of Section~\ref{sec_rationality},
under the more general hypothesis that the characteristic of~$k$ does not
divide the orders of the groups we consider.

For any algebra~$A$ we denote the group of invertible elements of~$A$ by~$A^{\times}$.

\section{Invariant Drinfeld twists}\label{inv-twist}

In this section we first recall what Drinfeld twists on a Hopf algebra are.
Next we use the Drinfeld twists that are invariant 
to attach to each Hopf algebra~$H$ 
two ``cohomology groups" $\H^1_{\tw}(H)$ and~$\H^2_{\tw}(H)$
in such a way that, when $H$ is finite-dimensional, these groups
are isomorphic to the lazy cohomology groups of the dual Hopf algebra.

\subsection{Twists}\label{twist}

Let $H$ be a Hopf algebra. 
A \emph{Drinfeld twist} on~$H$, or simply a \emph{twist} on~$H$, 
is an invertible element~$F$ of~$H \otimes H$ satisfying the condition
\begin{equation}\label{twist1}
(F \otimes 1) \, (\Delta \otimes \id_H)(F) = 
(1 \otimes F) \, (\id_H \otimes \Delta)(F) \, ,
\end{equation}
where $\Delta : H \to H \otimes H$ denotes systematically the coproduct of~$H$.
Twists were first defined by Drinfeld~\cite{Dr}.

The main feature of a twist~$F$ lies in the fact that it endows $H$ with a new Hopf algebra structure:
this new Hopf algebra, which we denote by~$H^F$, is equal to~$H$ as an algebra, 
has the same counit as~$H$, but has a new coproduct $\Delta^F$ given for all $a\in H$ by
\begin{equation}\label{twisted-Delta}
\Delta^F(a) = F \, \Delta(a) \, F^{-1} \, .
\end{equation}

The group $H^{\times}$ of invertible elements of~$H$ acts on the set of twists by 
\begin{equation}\label{act-on-twist}
a\cdot F= (a\otimes a) \, F \, \Delta(a ^{-1}) \, ,
\end{equation}
where $a\in H^{\times}$ and $F$ is a twist.
Using standard terminology, we say that twists belonging to the same orbit under~$H^\times$ 
are \emph{gauge equivalent}. 
Gauge equivalent twists $F, F'$ give rise to isomorphic Hopf algebras $H^F$ and~$H^{F'}$;
more precisely, if $F' = a\cdot F$, then the map $x \mapsto axa^{-1}$ induces an
isomorphism $H^F \to H^{F'}$ of Hopf algebras. 
For details, see~\cite{Dr, Mov, EG, AEGN}.

In general the product in $(H\otimes H)^{\times}$ of two twists is not a twist. 
To turn the set of twists into a group, we restrict to a special class of twists.
A twist $F$ is called \emph{invariant} if 
\begin{equation}\label{A2G}
\Delta(a) \, F = F \, \Delta(a)
\end{equation}
for all $a\in H$.
We will show below that invariant twists form a group.
Let us observe beforehand that if $F$ is an invariant twist, then 
$\Delta^F = \Delta$, where $\Delta^F$ is the twisted coproduct defined by~\eqref{twisted-Delta}.
Hence $H^F = H$ for such a twist
(nevertheless an invariant twist may change the possibly existing quasi-triangular structure of~$H$,
as we shall see in Section~\ref{subsec_braided}). 
Note also that if $a \in H$ is a group-like element, i.e., such that $\Delta(a) = a \otimes a$,
and if $F$ is an invariant twist, then $a\cdot F= F$.

In the literature twists are sometimes assumed to satisfy the additional condition
\begin{equation}\label{twist2}
(\eps \otimes \id)(F) = 1 = (\id \otimes \eps)(F) \, ,
\end{equation}
where $\eps: H\to k$ is the counit of~$H$.
We say that a twist is \emph{normalized} if Equations~\eqref{twist2} hold.

\begin{Lem}\label{lem-norm}
A twist $F$ is normalized if and only if it satisfies the condition
\begin{equation}\label{twist2bis}
(\eps \otimes \eps)(F) = 1 \, .
\end{equation}
\end{Lem}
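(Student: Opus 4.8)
The plan is to reduce everything to a single clean identity obtained by feeding the counit into the \emph{middle} tensor slot of the twist equation~\eqref{twist1}. One implication is immediate: if $F$ is normalized then $(\eps\otimes\id)(F)=1$, so applying $\eps$ gives $(\eps\otimes\eps)(F)=\eps(1)=1$, which is~\eqref{twist2bis}.

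For the converse I would introduce the two elements $u=(\eps\otimes\id)(F)$ and $v=(\id\otimes\eps)(F)$ of~$H$. Since $\eps\otimes\id$ and $\id\otimes\eps$ are algebra homomorphisms $H\otimes H\to H$ and $F$ is invertible, both $u$ and $v$ are invertible; moreover $\eps(u)=\eps(v)=(\eps\otimes\eps)(F)$. The goal is then to show that the hypothesis $(\eps\otimes\eps)(F)=1$ forces $u=v=1$, which is exactly condition~\eqref{twist2}.

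The key step is to apply the algebra homomorphism $\id\otimes\eps\otimes\id\colon H\otimes H\otimes H\to H\otimes H$ to both sides of~\eqref{twist1}. Using the counit axioms $(\eps\otimes\id)\Delta=\id$ and $(\id\otimes\eps)\Delta=\id$ to collapse the terms $(\Delta\otimes\id)(F)$ and $(\id\otimes\Delta)(F)$ back to~$F$, the left-hand side becomes $(v\otimes 1)\,F$ while the right-hand side becomes $(1\otimes u)\,F$. Cancelling the invertible element~$F$ then yields the identity $v\otimes 1=1\otimes u$ in $H\otimes H$.

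Finally I would apply $\eps\otimes\id$ and $\id\otimes\eps$ to $v\otimes 1=1\otimes u$. Writing $\lambda=(\eps\otimes\eps)(F)$, the first gives $u=\lambda\,1$ and the second gives $v=\lambda\,1$, so the hypothesis $\lambda=1$ forces $u=v=1$, as desired. The computations are routine; the only real point, and the mild obstacle, is recognizing that it is the middle counit slot that produces the usable relation $v\otimes 1=1\otimes u$, whereas applying $\eps$ to either outer slot only yields identities entangling $u$ and $v$ with the coproduct, which are harder to exploit directly.
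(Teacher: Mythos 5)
Your proof is correct and follows essentially the same route as the paper: both apply $\id\otimes\eps\otimes\id$ to the cocycle identity~\eqref{twist1} to obtain $(v\otimes 1)\,F=(1\otimes u)\,F$, cancel~$F$, and then apply the outer counits to conclude that $u$ and $v$ are both equal to the scalar $(\eps\otimes\eps)(F)$ times the unit. No gaps.
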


\pf
Let $a = (\eps \otimes \id)(F)$ and $b = (\id \otimes \eps)(F)$; these are invertible elements of~$H$.
Applying $\id\otimes \eps \otimes \id$ to both sides of~\eqref{twist1}, we obtain
$(b \otimes 1) \, F = (1 \otimes a) \, F$.
Multiplying by~$F^{-1}$ and applying $\eps \otimes \id$, we obtain
$a = \eps(b) \, 1$. Similarly, applying $\id\otimes \eps$ yields
$b = \eps(a) \, 1$. 
It follows that $a, b$ are equal scalar multiples of the unit~$1$ and we have
\begin{equation*}
a = b = (\eps \otimes \eps)(F)\, .
\end{equation*}
From this the desired equivalence can be deduced easily.
\epf

\subsection{Twist cohomology}\label{twist-coho}

We now show how to construct two cohomology groups out of invariant twists. 
To this end, we consider two groups $A^1(H)$ and~$A^2(H)$ defined as follows:
$A^1(H)$ is the centre of~$H^{\times}$ and
$A^2(H)$ consists of all invertible elements of~$H \otimes H$ satisfying~\eqref{A2G}.

For $a\in A^1(H)$, set 
\begin{equation}\label{d1}
\delta^1(a)  = (a \otimes a)  \, \Delta(a^{-1}) \in (H \otimes H)^{\times} \, .
\end{equation}
For $F \in A^2(H)$, set
\begin{equation}\label{d2L}
\delta^2_L(F)  = (F \otimes 1) \,  (\Delta \otimes \id)(F)
\in (H \otimes H \otimes H)^{\times}
\end{equation}
and 
\begin{equation}\label{d2R}
\delta^2_R(F)  = (1 \otimes F) \, (\id \otimes \Delta)(F)
\in (H \otimes H \otimes H)^{\times} \, .
\end{equation}

It follows from the definitions that the equalizer
\[
Z^2(H) = \{F \in A^2(H) \, |\,  \delta^2_L(F) =  \delta^2_R(F)\}
\]
is the set of invariant twists on~$H$.

Let $A^3(H)$ be the group of invertible elements $\omegaÊ\in H \otimes H \otimes H$ satisfying
\begin{equation}\label{A3G}
\Delta^{(2)}(a) \, \omega = \omega \, \Delta^{(2)}(a)
\end{equation}
for all $a\in H$. Here $\Delta^{(2)} = (\Delta \otimes \id) \Delta  = (\id \otimes \Delta) \Delta$
stands for the iterated coproduct.

\begin{Lem}\label{lem-deltas}
(a) The map $\delta^1 : A^1(H) \to (H \otimes H)^{\times}$ is a group homomorphism
whose image~${\Im}(\delta^1)$ is a central subgroup of~$A^2(H)$.

(b) The maps $\delta^2_L, \delta^2_R : A^2(H) \to (H \otimes H \otimes H)^{\times}$
are group homomorphisms whose images are subgroups of~$A^3(H)$.

(c) The group $Z^2(H)$ is a subgroup of~$A^2(H)$ containing~${\Im}(\delta^1)$ as a central subgroup.
\end{Lem}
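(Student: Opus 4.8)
The plan is to prove the three parts in order, the whole argument resting on two elementary facts. First, the defining condition of $A^2(H)$ says exactly that an element $F \in A^2(H)$ commutes with $\Delta(c)$ for every $c \in H$, and similarly for $A^3(H)$ with $\Delta^{(2)}(c)$ in place of $\Delta(c)$. Second, $\Delta$, $\Delta \otimes \id$ and $\id \otimes \Delta$ are algebra homomorphisms, so they transport any such commutation relation to a new one. Once $\delta^2_L$ and $\delta^2_R$ are known to be group homomorphisms, part~(c) will follow almost for free, since $Z^2(H)$ is by construction their equalizer.

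For part~(a), I would first record that $a \otimes a$ is central in $H \otimes H$ whenever $a$ lies in $A^1(H)$, the centre of~$H^\times$. Writing $\delta^1(a) = (a \otimes a)\,\Delta(a)^{-1}$, the map $a \mapsto a \otimes a$ is a homomorphism with central image and $a \mapsto \Delta(a)$ is a homomorphism; since $A^1(H)$ is abelian, the pointwise product of the first with the inverse of the second is again a homomorphism, which is $\delta^1$. That $\Im(\delta^1) \subseteq A^2(H)$, and is central there, then reduces to checking that each of the two factors of $\delta^1(a)$ commutes with every $\Delta(c)$ and with every $F \in A^2(H)$: the factor $a \otimes a$ does so because it is central in $H \otimes H$, while $\Delta(a)^{-1} = \Delta(a^{-1})$ commutes with all $\Delta(c)$ because $a^{-1}$ is central, and commutes with every $F \in A^2(H)$ by the very definition of $A^2(H)$.

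For part~(b), to see that $\delta^2_L$ is a homomorphism I would expand $\delta^2_L(FG)$ using that $\Delta \otimes \id$ is an algebra map; the claim then reduces to the single commutation $(G \otimes 1)\,(\Delta \otimes \id)(F) = (\Delta \otimes \id)(F)\,(G \otimes 1)$. This holds because the third tensor leg of $G \otimes 1$ is trivial, while on the first two legs one is comparing $G\,\Delta(c)$ with $\Delta(c)\,G$ for $c$ ranging over the left legs of $F$, and these agree since $G \in A^2(H)$. The case of $\delta^2_R$ is symmetric, using the trivial first leg of $1 \otimes G$ and the relation on the last two legs. For membership in $A^3(H)$, I would use $\Delta^{(2)} = (\Delta \otimes \id)\Delta = (\id \otimes \Delta)\Delta$ and show that both factors of $\delta^2_L(F)$ commute with $\Delta^{(2)}(c)$: for $(\Delta \otimes \id)(F)$ this comes from applying the algebra homomorphism $\Delta \otimes \id$ to the relation $F\,\Delta(c) = \Delta(c)\,F$, and for $F \otimes 1$ it follows from the same relation read off on the first two legs of $\Delta^{(2)}(c)$, the third leg being inert. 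The argument for $\delta^2_R(F)$ is again symmetric.

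For part~(c), since $\delta^2_L$ and $\delta^2_R$ are group homomorphisms by~(b), their equalizer $Z^2(H) = \{F \mid \delta^2_L(F) = \delta^2_R(F)\}$ is automatically a subgroup of $A^2(H)$. To show $\Im(\delta^1) \subseteq Z^2(H)$, I would observe that by~\eqref{act-on-twist} the element $\delta^1(a)$ equals the gauge transform $a \cdot (1 \otimes 1)$ of the trivial twist $1 \otimes 1$; since the action preserves the set of twists, $\delta^1(a)$ is a twist, and as it lies in $A^2(H)$ by~(a) it is an \emph{invariant} twist, that is, an element of $Z^2(H)$. (Alternatively one checks $\delta^2_L(\delta^1(a)) = \delta^2_R(\delta^1(a))$ directly, using that $a \otimes a$ and $a \otimes a \otimes a$ are central.) Finally, $\Im(\delta^1)$ is central in $Z^2(H)$ because it is already central in the larger group $A^2(H)$ by part~(a). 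The only real difficulty anywhere is the bookkeeping of tensor legs in part~(b); the conceptual content is simply that elements of $A^2(H)$ commute with all coproducts and that algebra homomorphisms preserve commutativity.
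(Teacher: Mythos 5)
Your proof is correct; the paper gives no argument here (its proof reads ``This is straightforward and left to the reader''), and your verification supplies exactly the routine details the authors intend, including the key observations that elements of $A^2(H)$ commute with all coproducts and that $\Delta$, $\Delta\otimes\id$, $\id\otimes\Delta$ transport commutation relations. The one implicit point, already built into the paper's formulation of the lemma rather than introduced by you, is that elements of $A^1(H)$, defined as the centre of $H^{\times}$, are treated as central in the algebra $H$ itself (so that $a\otimes a$ is central in $H\otimes H$); this identification holds in the finite-dimensional setting over an infinite field, which is the case of interest throughout the paper.
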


\pf
This is straightforward and left to the reader.
\epf

In view of Lemma~\ref{lem-deltas}, we may define the following groups.

\begin{Def}\label{def-twist-coho}
The twist cohomology groups of a Hopf algebra~$H$ are given by
\begin{equation}\label{H1G}
\H^1_{\tw}(H) = \Ker\bigl(\delta^1 : A^1(H) \to A^2(H) \bigr)
\end{equation}
and 
\begin{equation}\label{H2G}
\H^2_{\tw}(H) = Z^2(H)/{\Im}(\delta^1) \, .
\end{equation}
\end{Def}

\emph{A~priori} there is no reason why $\H^2_{\tw}(H)$ should be abelian,
and indeed in Section~\ref{Out} we shall prove the following.

\begin{prop}\label{nonabelian-Htw}
There are finite-dimensional
Hopf algebras~$H$ for which the group~$\H_{\tw}^2(H)$ is not abelian.
\end{prop}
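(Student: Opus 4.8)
The plan is to exhibit a concrete finite-dimensional Hopf algebra $H$ whose twist cohomology group $\H^2_{\tw}(H)$ is non-abelian, and the most natural candidate is the dual of a group algebra, $H = \OO_k(G)$ for a suitable finite group~$G$. Indeed, the abstract said the paper shows $\Int_k(G)/\Inn(G)$ embeds into $\H^2_{\ell}(G) \cong \H^2_{\tw}(\OO_k(G))$, so the whole game is to produce a single group~$G$ for which the quotient $\Int_k(G)/\Inn(G)$ is non-abelian. Since an embedding of a non-abelian group into $\H^2_{\tw}(H)$ forces the latter to be non-abelian, this reduces the proposition to a purely group-theoretic statement.

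First I would recall (or grant, since the embedding $\Int_k(G)/\Inn(G) \hookrightarrow \H^2_{\ell}(G)$ is a result promised earlier in the paper) that it suffices to find a finite group~$G$ with $\Int_k(G)/\Inn(G)$ non-abelian. Here $\Int_k(G)$ is the group of automorphisms of~$G$ realized by conjugation by units of the group algebra $k[G]$, and $\Inn(G)$ is the subgroup coming from conjugation by group elements. The quotient measures the ``extra'' automorphisms visible only through the algebra. The key point is that $\Int_k(G)$ can be strictly larger than $\Inn(G)$, and the structure of the quotient can be as complicated as we like if we choose~$G$ cleverly. I would look for a group where a large chunk of the full automorphism group $\Aut(G)$, or at least a non-abelian subgroup of the outer automorphism group, is induced by algebra-conjugation.

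The main obstacle will be controlling $\Int_k(G)$ precisely: it is generally hard to decide which outer automorphisms of~$G$ are actually induced by conjugation in $k[G]$. The natural strategy is to exploit the representation theory of~$G$, since an automorphism~$\varphi$ lies in $\Int_k(G)$ exactly when it fixes every character of~$G$ (equivalently, permutes conjugacy classes in a way compatible with the central primitive idempotents of $k[G]$ so that it becomes inner in each matrix block). Thus I would seek a group~$G$ possessing many automorphisms that preserve all irreducible characters but are genuinely outer, and such that these automorphisms, modulo $\Inn(G)$, generate a non-abelian group. A concrete route is to pick~$G$ among small groups (the abstract mentions an explicit group of order~$32$ with non-trivial $\Int_k(G)/\Inn(G)$) and compute $\Int_k(G)/\Inn(G)$ by hand or by character-theoretic criteria, verifying non-commutativity directly.

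Concretely, the steps would be: \emph{(i)} invoke the embedding $\Int_k(G)/\Inn(G) \hookrightarrow \H^2_{\tw}(\OO_k(G))$; \emph{(ii)} select a specific finite group~$G$; \emph{(iii)} identify the subgroup of $\Aut(G)$ consisting of automorphisms fixing all characters, using the criterion that such automorphisms are induced by conjugation by units; \emph{(iv)} check that this subgroup surjects onto a non-abelian quotient of $\Aut(G)/\Inn(G)$; and \emph{(v)} conclude that $\H^2_{\tw}(\OO_k(G))$ contains a non-abelian subgroup, hence is itself non-abelian. The delicate part is step~\emph{(iii)}--\emph{(iv)}, where one must verify both that the relevant automorphisms genuinely come from units of $k[G]$ (not merely abstract outer automorphisms) and that they fail to commute modulo $\Inn(G)$; this is exactly the computation the paper defers to its later section on a group of order~$32$, and I would follow that example rather than attempt a general construction.
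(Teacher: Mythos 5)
Your reduction is exactly the one the paper uses: by Corollary~\ref{coro_int_inn}\,(c) (equivalently, Proposition~\ref{prop-Hgaugequiv}\,(c) plus Theorem~\ref{HlH}), the quotient $\Int_k(G)/\Inn(G)$ embeds into $\H^2_{\tw}(\OO_k(G)) \cong \H^2_{\ell}(G)$, and by the identification $\Int_{\bar k}(G) = \Aut_{\rm c}(G)$ over a large enough field the problem becomes: find a finite group whose group of class-preserving outer automorphisms is non-abelian. Up to that point your argument is sound and is the paper's argument.

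The genuine gap is in your step~(ii): you never actually produce such a group, and the candidate you gesture at cannot work. The group of order~$32$ mentioned in the abstract is Wall's example, for which $\Aut_{\rm c}(G)/\Inn(G) \cong \ZZ/2$ --- an \emph{abelian} group. It witnesses non-triviality of the quotient, not non-commutativity, so ``following that example'' proves nothing about abelianness of $\H^2_{\tw}$. Producing a group with \emph{non-abelian} $\Aut_{\rm c}(G)/\Inn(G)$ is a substantially harder group-theoretic problem, and it is the actual content of the proposition: the paper resolves it by citing Sah's construction of $p$-groups of order $p^{5m}$ ($m \ge 3$) with this property, the smallest instance having order $2^{15}$. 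Without that input (or some equivalent construction), your proof does not close; a hand computation on small groups in the spirit you describe would not find an example, since none exists at such small orders.
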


By contrast, $\H^1_{\tw}(H)$ is abelian since $A^1(H)$ is.
The following result is an immediate consequence of the definitions.

\begin{Prop}\label{H1lG}
The group $\H^1_{\tw}(H)$ is the multiplicative group of central group-like elements of~$H$.
\end{Prop}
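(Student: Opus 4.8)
The plan is to unwind both sides of the asserted identity directly from the definitions. Recall that $\H^1_{\tw}(H) = \Ker(\delta^1 : A^1(H) \to A^2(H))$, where $A^1(H)$ is the centre of~$H^{\times}$ and $\delta^1(a) = (a \otimes a)\, \Delta(a^{-1})$. So an element $a$ lies in $\H^1_{\tw}(H)$ precisely when $a$ is central and invertible \emph{and} satisfies $(a \otimes a)\,\Delta(a^{-1}) = 1$. First I would rewrite this last equation, multiplying on the right by~$\Delta(a)$, into the equivalent form
\begin{equation*}
(a \otimes a) = \Delta(a) \, .
\end{equation*}
This is exactly the condition that $a$ be a group-like element of~$H$. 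Thus the kernel consists of those central invertible elements that are also group-like, which is precisely the set of central group-like elements.

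Next I would verify that this set is in fact the \emph{group} claimed, under the multiplication inherited from~$H$. The group-like elements of any Hopf algebra form a group under multiplication (the product of two group-likes is group-like, since $\Delta$ is an algebra map, and the inverse of a group-like is group-like, given by its antipode); intersecting with the centre of~$H^{\times}$ preserves this group structure, since the centre is itself a subgroup. One should also note that every group-like element is automatically invertible, its inverse being~$S(a)$ where $S$ is the antipode, so the phrase ``central group-like elements'' is unambiguous and already sits inside~$H^{\times}$. I would record that the group operation on $\H^1_{\tw}(H)$, inherited from~$A^1(H) \subseteq H^{\times}$, agrees with multiplication of group-like elements, so the identification is one of groups and not merely of sets.

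Since this is ``an immediate consequence of the definitions,'' there is no real obstacle; the only point requiring the slightest care is the algebraic manipulation showing that $\delta^1(a) = 1$ is equivalent to group-likeness, which hinges on the invertibility of~$a$ so that one may pass freely between $\Delta(a^{-1})$ and $\Delta(a)^{-1}$ (these coincide because $\Delta$ is an algebra homomorphism). I would therefore present the argument in a single short paragraph: translate $\delta^1(a) = 1$ into $\Delta(a) = a \otimes a$, observe that centrality is the remaining defining condition, and conclude that $\H^1_{\tw}(H)$ is the multiplicative group of central group-like elements of~$H$.
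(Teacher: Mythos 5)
Your proposal is correct and is precisely the unwinding that the paper has in mind when it declares the proposition ``an immediate consequence of the definitions'': the condition $\delta^1(a)=1\otimes 1$ for an invertible central $a$ is equivalent to $\Delta(a)=a\otimes a$, and the central group-likes form a group under the multiplication of $H$. No further comment is needed.
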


\begin{Rem}
Twist cohomology can also be defined using \emph{normalized} invariant twists. 
Indeed, first observe that the
normalized invariant twists form a subgroup~$\bar{Z}^2(H)$ of~$Z^2(H)$.
In view of Lemma~\ref{lem-norm}, we have the group decomposition
\[
Z^2(H) = k^{\times} (1\otimes 1) \times \bar{Z}^2(H) \, .
\]
On the other hand, we can decompose $A^1(H)$ as
\[
A^1(H) = k^{\times} 1 \times \bar{A}^1(H) \, ,
\]
where $\bar{A}^1(H)$ consists of the elements $a \in A^1(H)$ such that $\eps(a) = 1$.
It is easy to check that the homomorphism sends $\bar{A}^1(H)$ into~$\bar{Z}^2(H)$
and maps the trivial factor~$k^{\times} 1$ isomorphically onto~$k^{\times} (1\otimes 1)$.
Therefore, $\H^1_{\tw}(H)$ is equal to the kernel of $\delta^1 : \bar{A}^1(H) \to \bar{Z}^2(H)$
and 
\[
\H^2_{\tw}(H) \cong \bar{Z}^2(H)/\delta^1(\bar{A}^1(H)) \, .
\]
\end{Rem}

\subsection{Gauge equivalence for invariant twists}\label{Hopf-gauge}

Recall from Section~\ref{twist} that the group~$H^\times$ 
acts on the set of twists by~\eqref{act-on-twist}
and that twists belonging to the same orbit are called gauge equivalent. 

We may wonder which invertible elements of~$H$ preserve the set of \emph{invariant twists}.
To answer this question, 
we introduce the group~$N(H)$ of  elements $a\in H^\times$
such that
\begin{equation}\label{Hopf-auto}
\Delta(axa^{-1}) = (a \otimes a) \, \Delta(x) \, (a^{-1} \otimes a^{-1})
\end{equation}
for all $x \in H$.
For each $a\in H$ consider the algebra automorphism $\ad(a)$ of~$H$
sending~$x$ to~$axa^{-1}$. It is easy to check that an element $a\in H^\times$
satisfies Condition~\eqref{Hopf-auto} if and only if $\ad(a)$
is an automorphism of Hopf algebras.

It is easily verified that the group~$N(H)$ contains the centre~$A^1(H)$ of~$H^\times$,
the group~$\Gr(H)$ of group-like elements of~$H$, 
and their product $A^1(H) \,  \Gr(H)$ as normal subgroups.

\begin{prop}\label{prop-Hgaugequiv}
(a) For any invariant twist~$F$, the twist $a\cdot F$ is invariant
if and only if $a\in N(H)$.

(b) Let $F$ be an invariant twist.
If $F'= a \cdot F$ for some $a\in N(H)$, then $F$ and~$F'$ differ by an element 
of $\Im(\delta^1) = \delta^1(A^1(H))$ 
if and only if $a\in A^1(H) \, \Gr(H)$.

(c) The map $a \mapsto a \cdot (1\otimes 1)$ from $N(H)$ to~$Z^2(H)$
induces an injective group homomorphism
\[
N(H) / A^1(H) \,  \Gr(H) \hookrightarrow \H^2_{\tw}(H) \, .
\]
\end{prop}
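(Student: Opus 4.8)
The plan is to treat the three parts in order, exploiting throughout the single fact that an invariant twist commutes with every~$\Delta(x)$, which is just the defining condition~\eqref{A2G}; this observation drives all three computations.

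For part~(a) I would simply compute. Writing $a\cdot F=(a\otimes a)\,F\,\Delta(a^{-1})$, the twist $a\cdot F$ is invariant exactly when it commutes with $\Delta(x)$ for all $x\in H$. Conjugating $\Delta(x)$ by $a\cdot F$ and repeatedly using that the invariant $F$ commutes with every $\Delta(y)$, the condition collapses to
\[
(a^{-1}\otimes a^{-1})\,\Delta(x)\,(a\otimes a)=\Delta(a^{-1}xa)\qquad(x\in H),
\]
which, after replacing $x$ by $axa^{-1}$, is precisely the defining relation~\eqref{Hopf-auto} of $N(H)$. Both implications read off from this equivalence, so no genuine obstacle arises beyond bookkeeping; note that the particular $F$ never enters, only its invertibility and invariance.

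For part~(b) the engine is an elementary lemma: for $b\in H^\times$ and $F$ an invariant twist, one has $b\cdot F=F$ if and only if $b$ is group-like. Indeed $b\cdot F=F$ rearranges to $(b\otimes b)\,F=F\,\Delta(b)=\Delta(b)\,F$, and cancelling the invertible $F$ gives $\Delta(b)=b\otimes b$. Granting this, the reverse implication of~(b) is direct: if $a=cg$ with $c\in A^1(H)$ and $g\in\Gr(H)$, then $g\cdot F=F$ (the group-like case already recorded in Section~\ref{twist}) while $c\cdot F=\delta^1(c)\,F$ because the central element $c\otimes c$ commutes with $F$; hence $F'=a\cdot F$ differs from $F$ by $\delta^1(c)\in\Im(\delta^1)$. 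For the forward implication, if $a\cdot F=\delta^1(c)\,F=c\cdot F$ for some $c\in A^1(H)$, then $(c^{-1}a)\cdot F=F$, so $b:=c^{-1}a$ is group-like by the lemma and $a=cb\in A^1(H)\,\Gr(H)$.

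For part~(c) set $\phi(a)=a\cdot(1\otimes 1)$. Since $1\otimes 1$ is an invariant twist, part~(a) shows that $\phi$ maps $N(H)$ into $Z^2(H)$. The key point—the one step I expect to require care—is that $\phi$ is an honest group homomorphism: from the left-action identity $\phi(ab)=a\cdot\phi(b)=(a\otimes a)\,\phi(b)\,\Delta(a^{-1})$, together with the fact that the \emph{invariant} twist $\phi(b)$ commutes with $\Delta(a^{-1})$, one gets $\phi(ab)=(a\otimes a)\,\Delta(a^{-1})\,\phi(b)=\phi(a)\,\phi(b)$. Composing $\phi$ with the quotient $Z^2(H)\to\H^2_{\tw}(H)$ then yields a homomorphism whose kernel, by part~(b) applied to the base twist $F=1\otimes 1$, is exactly $A^1(H)\,\Gr(H)$; the induced map on the quotient is the desired injection. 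The main obstacle is really just recognizing that invariance of $\phi(b)$ upgrades the crossed-homomorphism-looking formula $\phi(ab)=(a\otimes a)\,\phi(b)\,\Delta(a^{-1})$ into a strict homomorphism—once that is seen, (c) follows formally from (a) and (b).
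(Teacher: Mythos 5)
Your argument is correct and follows essentially the same route as the paper: part (a) by cancelling the invertible invariant $F$ to reduce to Condition~\eqref{Hopf-auto}, part (b) by comparing $a\cdot F$ with $\delta^1(z)\,F$ and recognizing a group-like element (your stabilizer lemma is just the paper's cancellation step packaged separately), and part (c) via the identity $\phi(ab)=(a\otimes a)\,\phi(b)\,\Delta(a^{-1})=\phi(a)\,\phi(b)$, which is the paper's Equation~\eqref{abF} specialized to $F=1\otimes 1$. The only cosmetic remark: in (b) it is cleaner to justify $c\cdot F=\delta^1(c)\,F$ by the invariance of $F$ (i.e.\ $F\,\Delta(c^{-1})=\Delta(c^{-1})\,F$) rather than by centrality of $c\otimes c$ in $H\otimes H$, though the conclusion is the same since $\Im(\delta^1)$ is central in $Z^2(H)$.
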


\pf
(a) By definition, the twist $a\cdot F$ is invariant if and only if
\[
\Delta(x) \, (a\otimes a) \, F \, \Delta(a ^{-1}) = (a\otimes a) \, F \, \Delta(a ^{-1}) \, \Delta(x)
\]
for all $x \in H$. In view of the invariance and the invertibility of~$F$, this is equivalent to
\[
\Delta(x) (a\otimes a) \, \Delta(a ^{-1}) = (a\otimes a) \, \Delta(a ^{-1}) \, \Delta(x) \,,
\]
which in turn is equivalent to Condition~\eqref{Hopf-auto}, in which $a$ has been replaced by~$a^{-1}$;
equivalently,  $a^{-1}$, hence~$a$, belongs to~$N(H)$.

(b) There is $z \in A^1(H)$ such that $F' = \delta^1(z) \, F$ if and only if
\[
(a\otimes a) \, \Delta(a ^{-1}) = (z\otimes z) \, \Delta(z ^{-1}) \, ,
\]
which is equivalent to
$\Delta(z ^{-1} a) = z ^{-1} a\otimes z ^{-1} a$, hence to $z ^{-1} a \in \Gr(H)$.
The conclusion follows.

(c) In view of (a) and~(b) it is enough to check that the map $a \mapsto a \cdot (1\otimes 1)$
preserves products. 
We note the following equality for any $a,b \in N(H)$ and any invariant twist~$F$:
\begin{equation}\label{abF}
\bigl( a \cdot (1\otimes 1)\bigr) \, ( b \cdot F ) = (ab) \cdot F\, .
\end{equation}
This equality follows from an easy computation using the invariance of~$b \cdot F$.
To conclude, apply~\eqref{abF} to~$F = 1 \otimes 1$.
\epf

Let us consider the following subgroups of the group of Hopf algebra automorphisms of~$H$:
\[
\Int(H) = \ad\bigl(N(H)\bigr) 
\quad\text{and}\quad
\Inn(H) = \ad\bigl(\Gr(H)\bigr) \, .
\]
The group~$\Inn(H)$ is a normal subgroup of~$\Int(H)$.
Since the kernel of~$\ad$ is $A^1(H)$, 
it follows that $\ad(a)$, where $a \in N(H)$,
belongs to~$\Inn(H)$ if and only if $a$ belongs to $A^1(H) \,  \Gr(H)$.
Therefore, we have a natural isomorphism of groups
\begin{equation}\label{eq-NH-Int}
N(H) / A^1(H) \,  \Gr(H) \cong \Int(H)/\Inn(H) \, .
\end{equation}

In view of this isomorphism, 
Proposition~\ref{prop-Hgaugequiv} can be summarized as follows.

\begin{coro}\label{coro_int_innH}
(a) The group $\Int(H)/\Inn(H)$ acts freely on~$\H^2_{\tw}(H)$. 
Two elements of~$\H^2_{\tw}(H)$ belong to the same orbit under this action 
if and only if they can be represented by gauge equivalent invariant twists.

(b) Gauge equivalence and equivalence modulo~$\Im(\delta^1)$ define the same relation 
on invariant twists if and only if $\Int(H)/\Inn(H) = 1$.

(c) The group $\H^2_{\tw}(H)$ has a subgroup isomorphic to~$\Int(H)/\Inn(H)$.
\end{coro}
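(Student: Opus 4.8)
The plan is to prove part~(c) directly and then derive parts~(a) and~(b) from the action that part~(c) implicitly describes. Part~(c) is immediate: Proposition~\ref{prop-Hgaugequiv}(c) already produces an injective homomorphism $N(H)/A^1(H)\,\Gr(H) \hookrightarrow \H^2_{\tw}(H)$, and composing its source with the isomorphism~\eqref{eq-NH-Int} exhibits $\Int(H)/\Inn(H)$ as a subgroup of~$\H^2_{\tw}(H)$.

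For part~(a), I would first record the computation that converts gauge equivalence into multiplication. Writing $\varphi(a) = a \cdot (1\otimes 1) = (a\otimes a)\Delta(a^{-1})$ for $a \in N(H)$, the invariance of any twist~$F$ gives $a \cdot F = \varphi(a)\,F$, since the factor $\Delta(a^{-1})$ commutes past~$F$ by~\eqref{A2G}. Equation~\eqref{abF} then reads $\varphi(a)\,(b\cdot F) = (ab)\cdot F$, which upon setting $F = 1\otimes1$ shows that $\varphi$ is a homomorphism into~$Z^2(H)$. Note also that $\varphi$ restricted to $A^1(H)$ is exactly~$\delta^1$, by comparison of~\eqref{d1} with~\eqref{act-on-twist}, so $\Im(\delta^1) = \varphi(A^1(H))$.

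I then define the action of $\overline{a} \in N(H)/A^1(H)\,\Gr(H)$ on a class $[F] \in \H^2_{\tw}(H)$ by $\overline{a}\cdot[F] = [a\cdot F] = [\varphi(a)\,F]$. Well-definedness in~$F$ uses that $\Im(\delta^1)$ is central in~$Z^2(H)$ (Lemma~\ref{lem-deltas}(c)), so replacing $F$ by $\delta^1(z)F$ only multiplies the result by the same central factor; well-definedness in~$a$ uses Proposition~\ref{prop-Hgaugequiv}(b), which says $\varphi(g) \in \Im(\delta^1)$ precisely when $g \in A^1(H)\,\Gr(H)$, together with centrality again. That this is a left action is the homomorphism property $\varphi(ab) = \varphi(a)\varphi(b)$ rephrased through~\eqref{abF}. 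Freeness is immediate from Proposition~\ref{prop-Hgaugequiv}(b): $\overline{a}\cdot[F] = [F]$ means $a\cdot F$ and~$F$ differ by an element of~$\Im(\delta^1)$, hence $a \in A^1(H)\,\Gr(H)$ and $\overline{a} = 1$. For the orbit description, if $[F']$ and $[F]$ lie in one orbit then $F' = \delta^1(z)(a\cdot F)$ for some $a\in N(H)$ and $z\in A^1(H)$; since $\delta^1(z) = \varphi(z)$, another application of~\eqref{abF} rewrites this as $F' = (za)\cdot F$, a gauge transform of~$F$. Conversely, if invariant twists $F$ and~$F'$ are gauge equivalent, Proposition~\ref{prop-Hgaugequiv}(a) forces the gauge parameter into~$N(H)$, so $[F']$ and~$[F]$ share an orbit.

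Part~(b) is then a formal consequence of~(a). Equivalence modulo~$\Im(\delta^1)$ always refines gauge equivalence, because a relation $F' = \delta^1(z)F = z\cdot F$ is itself a gauge transformation; hence the two relations agree exactly when every gauge-equivalence class of invariant twists is a single $\Im(\delta^1)$-class, i.e.\ when every orbit of the free $\Int(H)/\Inn(H)$-action is a singleton, i.e.\ when $\Int(H)/\Inn(H) = 1$. I expect the only genuinely delicate point to be the twofold well-definedness of the action on the quotient $Z^2(H)/\Im(\delta^1)$; once the identity $a\cdot F = \varphi(a)\,F$ and the centrality of~$\Im(\delta^1)$ are in hand, everything else is bookkeeping layered on top of Proposition~\ref{prop-Hgaugequiv}.
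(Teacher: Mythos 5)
Your proposal is correct and follows essentially the same route as the paper: the paper presents Corollary~\ref{coro_int_innH} as a direct summary of Proposition~\ref{prop-Hgaugequiv} combined with the isomorphism~\eqref{eq-NH-Int}, and your argument is a careful unpacking of exactly that, with the key identity $a\cdot F=(a\otimes a)\Delta(a^{-1})\,F$ (valid by invariance of~$F$) and Equation~\eqref{abF} doing the same work they do implicitly in the paper. The well-definedness checks you flag as the delicate point are indeed the only details the paper leaves to the reader, and you handle them correctly via the centrality of~$\Im(\delta^1)$ from Lemma~\ref{lem-deltas}.
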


\subsection{Lazy cohomology}\label{lazy-cohom}

The definition of twist cohomology is motivated by lazy cohomology. 
Indeed, it is well known that the twists on a finite-dimensional Hopf algebra 
are in bijection with the two-cocycles on the dual Hopf algebra.
As we will see in Section~\ref{relating}, invariant twists correspond to
so-called \emph{lazy} two-cocycles. Using the latter,
Schauenburg~\cite{Scb} defined the lazy cohomology of a Hopf algebra.

Let us first recall the definition of lazy cohomology; we follow~\cite{BC}. 
On the dual vector space $H^* = \Hom(H,k)$ there is an associative product,
called the \emph{convolution product}. It is defined for $\lambda, \mu \in \Hom(H,k)$
by
\begin{equation}\label{convolution}
( \lambda \mu)(x) = \sum_{(x)} \, \lambda(x_1) \, \mu(x_2)
\end{equation}
for all $x\in H$. 
Here and in the sequel we make use of the Heyneman-Sweedler sigma-notation 
for the image 
\[
\Delta(x) = \sum_{(x)} \, x_1 \otimes x_2
\]
of an element~$x \in H$ under the coproduct.

Under the convolution product, $\Hom(H,k)$ becomes a monoid
whose unit is the counit~$\eps$ of~$H$.
The group of invertible elements of this monoid is denoted by~$\Reg(H)$.

An element $\lambda \in \Reg(H)$ is called \emph{lazy} if
\begin{equation}\label{lazy1}
\sum_{(x)} \, \lambda(x_1) \, x_2 = \sum_{(x)} \, \lambda(x_2) \, x_1
\end{equation}
for all $x\in H$.
The set $\Reg^1_{\ell}(H)$ of lazy elements of~$\Reg(H)$
is a central subgroup of the latter.

Since $H \otimes H$ is a Hopf algebra, we may consider the group $\Reg(H\otimes H)$
of invertible elements $\sigma \in \Hom(H\otimes H,k)$.
We define $\Reg_{\ell}^2(H)$ as the subgroup of~$\Reg(H\otimes H)$ 
consisting of all those $\sigma \in \Reg(H\otimes H)$ such that
\begin{equation}\label{lazy2}
\sum_{(x), (y)} \, \sigma(x_1 \otimes y_1)\, x_2y_2  
= \sum_{(x), (y)} \, \sigma(x_2 \otimes y_2) \, x_1y_1 \in H
\end{equation}
for all $x,y \in H$. The elements of~$\Reg_{\ell}^2(H)$ are called lazy.

A \emph{lazy left $2$-cocycle} of~$H$ is an element $\sigma \in \Reg^2_{\ell}(H)$ 
satisfying the equations
\begin{equation}\label{2cocycle}
\sum_{(x),(y)} \,\sigma(x_{1} \otimes  y_{1}) \,  \sigma(x_{2}y_{2} \otimes z) =
\sum_{(y),(z)} \, \sigma(y_{1} \otimes z_{1}) \, \sigma(x  \otimes y_{2} z_{2}) 
\end{equation}
for all $x,y,z \in H$.
We denote by~$Z^2_{\ell}(H)$ the set of lazy $2$-cocycles of~$H$.
Chen~\cite{Ch} was the first to observe that this set is a group
under the convolution product. 

For $\lambda \in \Reg(H)$, define $\partial(\lambda) \in \Reg(H \otimes H)$ for all $x,y \in H$ by
\begin{equation}\label{coboundary}
\partial(\lambda)(x\otimes y) = 
\sum_{(x),(y)} \, \lambda(x_1) \, \lambda(y_1) \, \lambda^{-1}(x_2 y_2) \, ,
\end{equation}
where $\lambda^{-1}$ is the convolution inverse of~$\lambda$.
When restricted to $\Reg^1_{\ell}(H)$, 
the map~$\lambda \mapsto \partial(\lambda)$ becomes
a group homomorphism
$$\partial : \Reg^1_{\ell}(H) \to \Reg^2_{\ell}(H) \, ,$$
whose image $B^2_{\ell}(H)$ is a central subgroup of $Z^2_{\ell}(H)$.

\begin{Def}\label{lazycoh2}
The lazy cohomology groups $\H_{\ell}^1(H)$ and $\H_{\ell}^2(H)$ are given by
\begin{equation*}
\H_{\ell}^1(H) = \Ker\bigl( \partial : \Reg^1_{\ell}(H) \to \Reg^2_{\ell}(H) \bigr)
\end{equation*}
and 
\begin{equation*}
\H_{\ell}^2(H) = Z^2_{\ell}(H)/B_{\ell}^2(H) \, .
\end{equation*}
\end{Def}

The group~$\H_{\ell}^1(H)$ is abelian since $\Reg^1_{\ell}(H)$ is.
The group $\H_{\ell}^2(H)$ classifies the
bicleft biGalois objects of~$H$ up to isomorphism, as was shown in~\cite[Th.~3.8]{BC}.

When the Hopf algebra $H$ is \emph{cocommutative}, 
then the lazy cohomology groups $\H_{\ell}^i(H)$ ($i=1,2$)
coincide with the Hopf algebra cohomology groups~$H^i(H,k)$ 
constructed by Sweedler in~\cite{Sw1}. 
In the special case where $H = k[G]$ is the algebra of a group~$G$,
it follows from~\cite[Th.~3.1]{Sw1} that for $i=1,2$, 
\begin{equation}\label{group-cohom}
\H_{\ell}^i(k[G]) \cong H^i(G, k^{\times})\, ,
\end{equation}
where $H^*(G, k^{\times})$ is the cohomology of the group~$G$ acting trivially on~$k^{\times}$.
Note that when the group $k^{\times}$ is divisible (e.g., if $k$ is algebraically closed), then
\[
\H^2_{\ell}(k[G]) \cong H^2(G, k^{\times}) \cong \Hom(H_2(G,\ZZ),k^{\times})  \, ,
\]
where $H_2(G,\ZZ)$ is the second integral homology group of~$G$.

\subsection{Relating twist cohomology to lazy cohomology}\label{relating}

We have already observed that for any Hopf algebra~$H$ 
the dual vector space~$H^*$ carries an algebra structure.
If in addition $H$ is finite-dimensional, then $H^*$ is a Hopf algebra
with coproduct given by
\[
\Delta(\lambda) (x \otimes y) = \lambda(xy) 
\]
for $\lambda \in H^*$ and $x,y \in H$.
(Here we identify $\Hom(H\otimes H,k)$ with $H^* \otimes H^*$.)
The counit of~$H^*$ is given by~$\eps(\lambda) = \lambda(1)$.

The following result relates twist cohomology to lazy cohomology.

\begin{Thm}\label{HlH}
For each finite-dimensional Hopf algebra~$H$ we have
$$\H_{\tw}^1(H) \cong \H_{\ell}^1(H^*) \quad \text{and} \quad
\H_{\tw}^2(H) \cong \H_{\ell}^2(H^*) \, .$$
\end{Thm}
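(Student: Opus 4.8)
The plan is to establish a pair of inverse bijections between the relevant objects on the $H$ side and the $H^*$ side, and then check that these bijections are group isomorphisms carrying coboundaries to coboundaries. The fundamental dictionary is the standard duality between $H\otimes H$ and $\Hom(H\otimes H,k) \cong H^*\otimes H^*$ for finite-dimensional~$H$: an invertible element $F\in (H\otimes H)^\times$ corresponds to an invertible $\sigma_F\in\Reg(H\otimes H)$, and more generally $(H\otimes H)^{\times}$ is canonically anti-isomorphic (or isomorphic, depending on conventions) to $\Reg(H\otimes H)$ via the convolution product. I would begin by spelling out this correspondence explicitly, writing $\sigma_F(\lambda\otimes\mu) = (\lambda\otimes\mu)(F)$ for $\lambda,\mu\in H^*$ under the identification $\Hom(H\otimes H,k)=H^*\otimes H^*$, and verifying that $F\mapsto\sigma_F$ sends the product in $(H\otimes H)^\times$ to the convolution product in $\Reg(H\otimes H)$.

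The key step is then a term-by-term translation of each defining condition. First I would show that the twist equation~\eqref{twist1} for $F$ translates exactly into the lazy $2$-cocycle equation~\eqref{2cocycle} for $\sigma_F$: both are built from the associativity constraint expressed through $\delta^2_L$ and $\delta^2_R$, and under the duality the iterated coproduct of $H$ pairs against the product of $H^*$, so \eqref{twist1} becomes the cocycle identity on $H^*$. Next, the invariance condition~\eqref{A2G}, namely $\Delta(a)F=F\Delta(a)$ for all $a\in H$, must be shown to correspond precisely to the laziness condition~\eqref{lazy2}: pairing $\Delta(a)F=F\Delta(a)$ against an arbitrary $\lambda\otimes\mu\in H^*\otimes H^*$ and using the definition of the coproduct on $H^*$ gives exactly the two sides of~\eqref{lazy2}. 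Thus the set $Z^2(H)$ of invariant twists maps bijectively onto the group $Z^2_\ell(H^*)$ of lazy $2$-cocycles. In the same spirit, the degree-one data match up: the central invertible elements $A^1(H)=Z(H^\times)$ dualize to the lazy elements $\Reg^1_\ell(H^*)$, and a direct computation should show that the coboundary map $\delta^1$ of~\eqref{d1} corresponds under the duality to the coboundary $\partial$ of~\eqref{coboundary}, so that $\Im(\delta^1)$ matches $B^2_\ell(H^*)$ and $\Ker(\delta^1)$ matches $\Ker(\partial)$.

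Once all four correspondences (the two groups in each degree, plus the two coboundary maps) are verified to be compatible group isomorphisms, both statements follow by passing to the indicated quotients and kernels: $\H^1_{\tw}(H)=\Ker(\delta^1)\cong\Ker(\partial)=\H^1_\ell(H^*)$ and $\H^2_{\tw}(H)=Z^2(H)/\Im(\delta^1)\cong Z^2_\ell(H^*)/B^2_\ell(H^*)=\H^2_\ell(H^*)$. I expect the main obstacle to be bookkeeping with conventions rather than any conceptual difficulty: one must pin down whether $F\mapsto\sigma_F$ is an isomorphism or an anti-isomorphism of the relevant multiplicative structures, and whether the correspondence between invariance~\eqref{A2G} and laziness~\eqref{lazy2} introduces an order reversal that must be absorbed (for instance by passing to $F^{-1}$ or to the co-opposite). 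The delicate point is to arrange these conventions consistently so that all three translations --- the cocycle identity, the laziness/invariance condition, and the coboundary --- use the \emph{same} duality map, yielding genuine (covariant) group isomorphisms in the end rather than a mismatched pair.
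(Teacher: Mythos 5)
Your proposal follows exactly the paper's route: identify $a\in H$ with $\lambda_a\in\Reg(H^*)$ and $F\in H\otimes H$ with $\sigma_F\in\Reg(H^*\otimes H^*)$, check that the twist equation, the invariance condition~\eqref{A2G}, centrality in $H^\times$, and the map $\delta^1$ translate respectively into the $2$-cocycle identity~\eqref{2cocycle}, laziness~\eqref{lazy2}, laziness in degree one, and the coboundary $\partial$, then pass to kernels and quotients. This matches the paper's proof (which delegates the same four verifications to a lemma stated without proof), so your plan is correct and essentially identical.
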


As a consequence of the theorem and of Proposition~\ref{nonabelian-Htw},
we obtain the following.

\begin{coro}\label{nonabelian}
There are finite-dimensional
Hopf algebras~$H$ for which the group~$\H_{\ell}^2(H)$ is not abelian.
\end{coro}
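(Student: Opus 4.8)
The plan is to deduce the statement formally from the two preceding results, which I am entitled to assume. By Proposition~\ref{nonabelian-Htw} there is a finite-dimensional Hopf algebra~$H_0$ for which the twist cohomology group~$\H^2_{\tw}(H_0)$ is not abelian. The first (and only genuine) step is to recall that the linear dual of a finite-dimensional Hopf algebra is again a finite-dimensional Hopf algebra, so that Theorem~\ref{HlH} may be applied to~$H_0$.

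The second step is to invoke Theorem~\ref{HlH}, which yields a group isomorphism
\[
\H^2_{\tw}(H_0) \cong \H^2_{\ell}(H_0^*) \, .
\]
Since the left-hand side is non-abelian by the choice of~$H_0$, the right-hand side is non-abelian as well. Setting $H = H_0^*$, we obtain a finite-dimensional Hopf algebra~$H$ whose lazy cohomology group~$\H^2_{\ell}(H)$ is not abelian, which is exactly what the statement asserts.

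Since the corollary is a formal consequence of the two quoted results, there is no real obstacle within this argument itself. The substantive content lies entirely elsewhere: in Proposition~\ref{nonabelian-Htw}, which must exhibit an explicit~$H_0$ with non-abelian twist cohomology (the natural candidate being a group algebra~$k[G]$ for a group~$G$ with non-abelian quotient $\Int_k(G)/\Inn(G)$, fed into the embedding of Proposition~\ref{prop-Hgaugequiv}(c)), and in the proof of Theorem~\ref{HlH}. The single point worth flagging is that Theorem~\ref{HlH} provides an isomorphism of \emph{groups}, not merely a bijection, so that non-commutativity is genuinely transported across it; this is already built into the statement of that theorem and so requires nothing further here.
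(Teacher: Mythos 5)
Your proposal is correct and follows exactly the paper's own route: the corollary is stated there as an immediate consequence of Theorem~\ref{HlH} and Proposition~\ref{nonabelian-Htw}, i.e.\ one takes an $H_0$ with non-abelian $\H^2_{\tw}(H_0)$ and transports the non-commutativity through the group isomorphism $\H^2_{\tw}(H_0)\cong\H^2_{\ell}(H_0^*)$. Your remark that the substantive content lives in those two prior results (with $H_0=k[G]$ for a group with non-abelian $\Int_k(G)/\Inn(G)$, via Sah's examples) matches the paper precisely.
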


\pf[Proof of Theorem~\ref{HlH}]
Identify an element $a\in H$ with $\lambda_a : H^* \to k$ given by
$\lambda_a(\alpha) = \alpha(a)$ for all $\alpha \in H^*$.
Similarly identify an element $F \in H \otimes H$ with $\sigma_F : H^* \otimes H^* \to k$
given by
$\sigma_F(\alpha\otimes \beta) = (\alpha\otimes \beta)(F)$ for all $\alpha, \beta \in H^*$.
It is easy to check that $a\in H$ is invertible if and only if $\lambda_a \in \Reg(H^*)$
and that $F\in H \otimes H$ is invertible if and only if $\sigma_F \in \Reg(H^* \otimes H^*)$.
It is also well known that $F \in H\otimes H$ is a twist if and only if
$\sigma_F$ satisfies~\eqref{2cocycle}.
We conclude the proof with the help of the following lemma
whose proof is straightforward.
\epf

\begin{Lem}
(a) Under the previous identifications,
the map 
\[
\partial : \Reg(H^*) \to \Reg(H^* \otimes H^*)
\]
of~\eqref{coboundary}
coincides with the map $\delta^1: H^{\times} \to (H \otimes H)^{\times}$.

(b) The linear map $\lambda_a \in \Reg(H^*)$ is lazy if and only if $a \in H^{\times}$ is central.

(c) The linear map $\sigma_F \in \Reg(H^*\otimes H^*)$ is lazy if and only if  $F$ is invariant.
\end{Lem}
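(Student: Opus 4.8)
The plan is to verify the three assertions (a), (b), (c) by unwinding the dual pairing between $H$ and~$H^*$ in each case, since all three amount to translating an algebraic identity on~$H$ into a convolution-theoretic identity on~$H^*$. The computations all rest on two basic compatibilities of the identifications $a \mapsto \lambda_a$ and $F \mapsto \sigma_F$: first, that the multiplication of~$H$ corresponds under duality to the coproduct of~$H^*$ (so that $\lambda_{ab}$ is the convolution product $\lambda_a \lambda_b$), and second, that the coproduct of~$H$ corresponds to the multiplication on~$H^*$ (so that the coproduct $\Delta$ of~$H$, paired against a product $\alpha\beta$ in~$H^*$, records $\Delta(\alpha\beta)$ evaluated on elements). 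These are exactly the statements recalled just before the theorem, namely $\Delta(\lambda)(x\otimes y) = \lambda(xy)$ and $\eps(\lambda)=\lambda(1)$.

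For part~(a), the plan is to compute $\partial(\lambda_a)(\alpha\otimes\beta)$ directly from the definition~\eqref{coboundary}, using that the Sweedler components $(\lambda_a)_1, (\lambda_a)_2$ of the coproduct on~$H^*$ pair with $\alpha,\beta$ so as to produce $\alpha(a_1)\beta(a_2)$, and that the convolution inverse $\lambda_a^{-1}$ equals $\lambda_{a^{-1}}$. One then checks this agrees with $\sigma_{\delta^1(a)}(\alpha\otimes\beta)$, where $\delta^1(a) = (a\otimes a)\Delta(a^{-1})$ from~\eqref{d1}. This is a mechanical matching of the two three-fold evaluations. For part~(b), the laziness condition~\eqref{lazy1} for $\lambda_a$ says $\sum \lambda_a((\,\cdot\,)_1)(\,\cdot\,)_2 = \sum \lambda_a((\,\cdot\,)_2)(\,\cdot\,)_1$ as elements of~$H^*$; evaluating both sides against an arbitrary $\alpha\in H^*$ and using $\Delta(\alpha)(x\otimes y)=\alpha(xy)$ turns this into the statement $\alpha(xa)=\alpha(ax)$ for all $\alpha,x$ — i.e.\ $xa=ax$ for all~$x$, which is precisely centrality of~$a$. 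For part~(c), the laziness condition~\eqref{lazy2} for $\sigma_F$ on $H^*\otimes H^*$ should, after the analogous evaluation against a pair $\alpha\otimes\beta$, reduce to the equality $\Delta(x)\,F = F\,\Delta(x)$ for all~$x$, which is the invariance condition~\eqref{A2G}.

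The genuine work is bookkeeping rather than conceptual: the main obstacle will be keeping the Sweedler indices straight across the two Hopf algebras, because the coproduct on~$H^*$ that appears in~\eqref{lazy1} and~\eqref{lazy2} is defined by duality against the \emph{multiplication} of~$H$, so a single $\Delta$ on~$H^*$ unpacks into a product inside an evaluation on~$H$. I would therefore organize each of (a), (b), (c) as: write the left-hand side, insert the dual pairing, collapse the $H^*$-coproducts into $H$-multiplications, and read off the resulting identity in~$H$. Once the identifications are set up carefully this is routine, which is exactly why the paper defers it with ``whose proof is straightforward'' — and given that Theorem~\ref{HlH} was already reduced to this lemma in the preceding proof, establishing these three equivalences completes the argument that $\H^2_{\tw}(H) \cong \H^2_\ell(H^*)$ and likewise in degree~$1$.
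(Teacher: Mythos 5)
Your proposal is correct and is exactly the dualization bookkeeping the paper intends when it dismisses the proof as ``straightforward'': identifying $\lambda_a\lambda_b=\lambda_{ab}$, collapsing the $H^*$-coproducts into products in $H$, and reading off $\delta^1(a)=(a\otimes a)\Delta(a^{-1})$, centrality of $a$, and invariance of $F$ respectively. The only nitpick is a wording slip in part~(b): both sides of~\eqref{lazy1} applied to an input $\alpha\in H^*$ are elements of $H^*$, so one evaluates them at an arbitrary $x\in H$ (not ``against $\alpha$''), but your stated conclusion $\alpha(ax)=\alpha(xa)$ for all $\alpha,x$ is the right one.
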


\begin{Rem}
Bichon and Carnovale \cite[Sect.~1]{BC} associated to each Hopf algebra~$H$ 
certain groups $\CoInn(H)$ and $\CoInt(H)$ of Hopf algebra automorphisms of~$H$.
If $H$ is finite-dimensional and $H^*$ is the dual Hopf algebra,
then $\CoInt(H) \cong \Int(H^*)$ and $\CoInn(H) \cong \Inn(H^*)$
(the groups of Hopf algebra automorphisms of~$H$ and~$H^*$ are
naturally isomorphic). 
There is a version of Corollary~\ref{coro_int_innH} in this context; 
see~\cite[Lemmas~1.11--1.13]{BC}.
In~\cite[Ex.~7.5]{BC} a $32$-dimensional (non-commutative, non-cocommutative)
Hopf algebra with non-trivial~$\CoInt(H)/\CoInn(H)$ was constructed.
\end{Rem}

\section{Hopf algebras of groups}\label{grings}

In this section we apply the content of Section~\ref{inv-twist}
to the main case of interest to us, namely when $H$ is the algebra of a group.

\subsection{The Hopf algebra~$k[G]$}\label{AG}

Let $G$ be a group and $k[G]$ the corresponding group algebra over the ground field~$k$.
It is well known that~$H = k[G]$ carries the structure of a Hopf algebra
with coproduct~$\Delta$, counit~$\eps$, and antipode~$S$ given by
\begin{equation*}\label{HopfkG}
\Delta(g) = g\otimes g\, , \quad
\eps(g) = 1\, , \quad S(g) = g^{-1}
\end{equation*}
for all $g\in G$.
The only group-like elements of~$H$ are the elements of~$G$.

In this case $A^1(H)$ is equal to the centre of~$k[G]^{\times}$ and $A^2(H)$ is the group
of invertible elements of the algebra $(k[G] \otimes k[G])^G$ of elements of~$k[G] \otimes k[G]$
fixed under the diagonal conjugation action, i.e., 
$A^2(H)$ consists of the invertible $G$-invariant elements of~$k[G] \otimes k[G]$.
Similarly, $A^3(H)$ is the group of invertible $G$-invariant elements 
of~$k[G] \otimes k[G] \otimes k[G]$.

\begin{Not}
We shall henceforth denote $\H_{\tw}^i(k[G])$ 
($i=1,2$) of Definition~\ref{def-twist-coho}
by~$\H_{\ell}^i(G)$, or by~$\H_{\ell}^i(G/k)$ if we need to stress the ground field~$k$,
as will be the case in Section~\ref{sec_rationality}.
\end{Not}

In view of Proposition~\ref{H1lG},
$\H_{\ell}^1(G)$ coincides with the centre of~$G$;
in particular, it is independent of the ground field~$k$.
So the main question is to determine the group~$\H^2_{\ell}(G)$.

\begin{Expl}
Let us consider a situation where $\H^2_{\ell}(G)$ can be completely determined
(and in fact is trivial).
We say that a group algebra~$k[G]$ has \emph{trivial units} if all invertible elements
of~$k[G]$ are of the form~$\lambda g$, where $\lambda \in k^{\times}$ and $g\in G$.
In other words, there is a group isomorphism $k[G]^{\times} \cong k^{\times} \times G$.
Using the natural projections $G\times G \to G$, one easily checks that
$k[G] \otimes k[G] = k[G \times G]$ has trivial units if $k[G]$ has trivial units.
The class of groups whose group algebras have trivial units includes all
left-orderable groups such as the free abelian groups, the free groups, the (pure) braid groups.

We now claim that if $k[G]$ has trivial units, then 
\[
\H^2_{\ell}(G) = 1 \, .
\]
Indeed, 
since $k[G] \otimes k[G] = k[G \times G]$ has trivial units, any element~$F \in A^2(k[G])$
is of the form $F = \lambda \, g \otimes h$, where $\lambda \in k^{\times}$ and $g,h\in G$.
One checks that
\begin{equation*}
\delta^2_L(F) = \lambda^2 \, g^2 \otimes hg \otimes h 
\quad\text{and} \quad
\delta^2_R(F) = \lambda^2 \, g \otimes gh \otimes h^2 \, .
\end{equation*}
It follows that any element of~$Z^2(k[G])$ is of the form $\lambda \, 1 \otimes 1$,
which belongs to the image of~$\delta^1$ since it is equal to $\delta^1(\lambda \, 1)$.

Observe that, if $k[G]$ has trivial units, then $G$ is necessarily torsion free, 
hence infinite. 
\end{Expl}

If $G$ is a \emph{finite} group, then
the Hopf algebra~$k[G]$ is finite-dimensional and we may consider its dual Hopf algebra.
The latter is the algebra~$\OO_k(G)$ of $k$-valued functions on~$G$. 
For each $g\in G$ let $e_g$ be the characteristic function of the singleton~$\{g\}$. 
The set $(e_g)_{g\in G}$ is a basis of~$\OO_k(G)$, 
dual to the basis $(g)_{g\in G}$ of~$k[G]$; 
it is also an orthogonal family of primitive idempotents.
The coproduct~$\Delta$, counit~$\eps$, and antipode~$S$ of~$\OO_k(G)$ are given by
\begin{equation}\label{HopfkGdual}
\Delta(e_g) = \sum_{h\in G} \, e_h\otimes e_{h^{-1}g}\, , \quad
\eps(e_g) = \delta_{g,1} \, , \quad S(e_g) = e_{g^{-1}}
\end{equation}
for all $g\in G$.

By Theorem~\ref{HlH} we have
\begin{equation}\label{HlG}
\H_{\ell}^i(G)  = \H_{\tw}^i(k[G]) \cong \H_{\ell}^i(\OO_k(G)) \qquad (i=1,2) \, .
\end{equation}

The constructions of Section~\ref{twist-coho} applied to the group algebra~$k[G]$
thus give a direct definition of the lazy cohomology of the function Hopf algebra~$\OO_k(G)$;
we shall use this direct definition to compute~$\H_{\ell}^2(\OO_k(G))$
in Section~\ref{sec_support}.

\subsection{Automorphism groups}\label{Out}

We now apply the content of Section~\ref{Hopf-gauge} to the case~$H = k[G]$.
If $a \in N(k[G])$, then for all $g\in G$,
\begin{eqnarray*}
\Delta(aga^{-1}) & = & (a \otimes a) \, \Delta(g) \, (a^{-1} \otimes a^{-1}) \\
& = & (a \otimes a) \, (g \otimes g) \, (a^{-1} \otimes a^{-1}) \\
& = & (aga^{-1} \otimes aga^{-1}) \, .
\end{eqnarray*}
Hence, $aga^{-1}$ belongs to~$G$ for all~$g$.
In other words, $a$ belongs to the normalizer~$N_k(G)$ of~$G$ in~$\kg^\times$. 
The converse holds and we have
\begin{equation}\label{NG=NG}
N(k[G]) = N_k(G) \, .
\end{equation}

Since $\Gr(k[G]) = G$, the group $\Inn(k[G])$ coincides with the group~$\Inn(G)$
of inner automorphisms of~$G$.
We set $\Int_k(G) = \Int(k[G])$; this is the group of automorphisms of~$G$
induced by conjugation by invertible elements of~$k[G]$. 
As a special case of~\eqref{eq-NH-Int}, we have the isomorphism
\begin{equation}\label{eq-NG-Int}
N_k(G) / A^1(k[G]) \, G \cong \Int_k(G)/\Inn(G) \, .
\end{equation}

In the present context Corollary~\ref{coro_int_innH} takes the following form.

\begin{coro}\label{coro_int_inn}
(a) The group $\Int_k(G)/\Inn(G)$ acts freely on~$\H^2_{\ell}(G)$. 
Two elements of~$\H^2_{\ell}(G)$ belong to the same orbit under this action 
if and only if they can be represented by gauge equivalent invariant twists.

(b) Gauge equivalence and equivalence modulo~$\Im(\delta^1)$ define the same relation 
on invariant twists if and only if $\Int_k(G)/\Inn(G) = 1$.

(c) The group $\H^2_{\ell}(G)$ has a subgroup isomorphic to~$\Int_k(G)/\Inn(G)$.
\end{coro}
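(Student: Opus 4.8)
The plan is to obtain Corollary~\ref{coro_int_inn} as the direct specialization of Corollary~\ref{coro_int_innH} to the Hopf algebra $H = k[G]$, so that no genuinely new argument is needed beyond translating the general statement into the group-theoretic notation of this subsection. First I would record the dictionary between the two settings. By the definition $\Int_k(G) = \Int(k[G])$; by the identity $\Gr(k[G]) = G$ we have $\Inn(k[G]) = \ad(\Gr(k[G])) = \Inn(G)$; and by the Notation convention the twist cohomology group $\H^2_{\tw}(k[G])$ is precisely $\H^2_{\ell}(G)$. Under this translation the quotient $\Int(H)/\Inn(H)$ of Corollary~\ref{coro_int_innH} becomes $\Int_k(G)/\Inn(G)$, and $\H^2_{\tw}(H)$ becomes $\H^2_{\ell}(G)$.

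With this dictionary in place, each of the three assertions follows verbatim from the corresponding part of Corollary~\ref{coro_int_innH}: part~(a) here is part~(a) there, and similarly for (b) and (c). The only thing that must be checked is that Corollary~\ref{coro_int_innH} is genuinely applicable to $H = k[G]$, that is, that the structural facts underlying it have all been verified in the present context. These are exactly the computation $N(k[G]) = N_k(G)$ of~\eqref{NG=NG}, together with the isomorphism $N_k(G)/A^1(k[G])\,G \cong \Int_k(G)/\Inn(G)$ of~\eqref{eq-NG-Int}, which is simply the instance of~\eqref{eq-NH-Int} for $H = k[G]$.

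Since these facts have already been established, I expect no real obstacle here: the proof amounts to substitution, and the ``hard part'' is only the bookkeeping of matching the two notations, which is routine. If one preferred a self-contained argument not routed through the general case, I would instead reprove the three points directly for $k[G]$, using Proposition~\ref{prop-Hgaugequiv}: for~(a), that gauge transformation by $N_k(G)$ on invariant twists descends to a free action of $N_k(G)/A^1(k[G])\,G \cong \Int_k(G)/\Inn(G)$ on $\H^2_{\ell}(G)$, with two classes in a common orbit exactly when they admit gauge-equivalent invariant representatives; for~(b), that gauge equivalence collapses to $\Im(\delta^1)$-equivalence precisely when this quotient is trivial; and for~(c), that the map $a \mapsto a\cdot(1\otimes 1)$ of Proposition~\ref{prop-Hgaugequiv}(c) embeds $\Int_k(G)/\Inn(G)$ into $\H^2_{\ell}(G)$. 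Invoking Corollary~\ref{coro_int_innH} directly is nonetheless cleaner and avoids repeating these computations.
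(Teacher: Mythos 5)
Your proposal is correct and follows exactly the paper's route: the paper states this corollary as the immediate specialization of Corollary~\ref{coro_int_innH} to $H=k[G]$, using precisely the identifications $N(k[G])=N_k(G)$, $\Gr(k[G])=G$, $\Inn(k[G])=\Inn(G)$, $\Int_k(G)=\Int(k[G])$ and the isomorphism~\eqref{eq-NG-Int} that you list. Nothing further is needed.
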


In Section~\ref{BG} we shall check that $\Int_k(G)/\Inn(G) = 1$
for a number of classical groups.
Actually, finding finite groups with non-trivial $\Int_k(G)/\Inn(G)$ is not so straightforward. 
The easiest way to obtain such a group is via the group~$\Aut_{\rm c}(G)$
of \emph{class-preserving automorphisms} of~$G$, 
i.e., the ones that preserve each conjugacy class.
Let us explain how $\Int_k(G)$ and~$\Aut_{\rm c}(G)$ are related
when $G$ is a finite group.
First, observe that if $k \subset \bar k$ is an extension of fields, then
$\Int_k(G)$ is a subgroup of~$\Int_{\bar k}(G)$.
Now, by~\cite[Prop.~2.5]{JM}, if $\bar k$ is big enough (for instance,
if it is algebraically closed) and its characteristic is prime to the
order of~$G$, then
\begin{equation}\label{IntAut}
\Int_{\bar k}(G) = \Aut_{\rm c}(G)\, .
\end{equation}
The inclusion $\Int_k(G) \subset \Aut_{\rm c}(G)$ follows immediately
for any field~$k$ whose characteristic is prime to the order of~$G$.
This inclusion can also be deduced from the following simple argument:
by definition, for any $\varphi \in \Int_k(G)$ and any~$g\in G$,
the image $\rho(\varphi(g))$ under a representation~$\rho$ of the group 
is conjugate to~$\rho(g)$; hence, $\chi(\varphi(g)) = \chi(g)$
for any character~$\chi$ of~$G$ and any $g\in G$; it follows that $\varphi$
preserves each conjugacy class.

Burnside~\cite{Bur} was the first to exhibit examples of finite groups
with non-trivial $\Aut_{\rm c}(G)/ \Inn(G)$, hence such that 
$\Int_k(G) /\Inn(G) \neq 1$ for large enough~$k$.
His smallest example was of order~$729$;
Wall~\cite{Wal} much later found one of order~$32$
(we shall discuss this example in Section~\ref{Wall}). 
Note that the triviality of~$\Int_k(G)/\Inn(G)$ 
is related to the isomorphism problem for group algebras (see~\cite{Maz, RZ}).

We are now ready to prove Proposition~\ref{nonabelian-Htw}.

\begin{proof}[Proof of Proposition~\ref{nonabelian-Htw}]
It is enough to find a finite group such that $\H_{\ell}^2(G)$ is non-abelian.
Now, C.-H.\ Sah~\cite{Sah} constructed for each prime~$p$ 
and each integer $m\geq 3$ a $p$-group $G$ of order~$p^{5m}$ 
for which $\Aut_{\rm c}(G)/\Inn(G)$ is non-abelian.
It follows from~\eqref{IntAut} and from Corollary~\ref{coro_int_inn}\,(c) that
for such a group~$G$ (and an appropriate ground field~$k$) 
the group~$\H^2_{\ell}(G)$ is non-abelian. 
\end{proof}

Note that the smallest example exhibited by Sah is of order~$2^{15}$, that is
roughly a thousand times bigger than the smallest group 
for which $\Aut_{\rm c}(G)/\Inn(G) \neq 1$.

\section{The abelian case}\label{abelian}

In this section we further assume that the ground field~$k$ is \emph{algebraically closed}.
Under this assumption we describe $\H^2_{\ell}(A)$ for any finite abelian group~$A$. 
It is worth indulging in the details for we shall see later that, in the case of a general finite group, 
its abelian subgroups control the invariant twists.

All twists for~$A$ are obviously invariant, so what we describe is simply the 
group of twists modulo gauge equivalence.
Observe also that $\Int_k(A) = \Inn(A) = \{\id\}$ in this case; 
it follows from Corollary~\ref{coro_int_inn}
that gauge equivalence coincides with equivalence modulo~$\Im(\delta^1)$.

\subsection{General picture}\label{subsec-fourier}
Let $A$ be a finite \emph{abelian} group 
and $\widehat{A} = \Hom(A,k^{\times})$ be its group of characters.
Recall that the \emph{discrete Fourier transform} is the map
\[
k[A] \to \OO_k(\widehat{A}) \, ; \, g \mapsto \widehat{g}
\]
defined for all $g\in A$ and $\chi \in \widehat{A}$ by
$\widehat{g}(\chi) = \chi(g^{-1})$.
It is easy to check that it is a Hopf algebra isomorphism.
We now compute $\H^2_{\ell}(A)$.

\begin{Prop}\label{prop_abelian_case}   
We have
$\H^2_{\ell}(A) \cong H^2(\widehat{A}, k^{\times})$.
\end{Prop}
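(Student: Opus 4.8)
The plan is to exploit the discrete Fourier transform isomorphism just introduced, together with the general machinery relating twist cohomology and lazy cohomology from Section~\ref{relating}, to reduce the statement to a known computation of lazy cohomology. Since $A$ is abelian, the group algebra $k[A]$ is both commutative and cocommutative, and by the Notation preceding this Proposition we have $\H^2_{\ell}(A) = \H^2_{\tw}(k[A])$. The Fourier transform gives a Hopf algebra isomorphism $k[A] \cong \OO_k(\widehat A)$, which is the dual of $k[\widehat A]$. First I would invoke Theorem~\ref{HlH} in the form $\H^2_{\tw}(k[A]) \cong \H^2_{\ell}(\OO_k(A)^*) $, but more directly I would use isomorphism~\eqref{HlG}, namely $\H^2_{\ell}(A) \cong \H^2_{\ell}(\OO_k(A))$, and then transport along the Fourier isomorphism.

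The cleanest route is as follows. The Fourier transform identifies $\OO_k(\widehat A)$ with $k[A]$ as Hopf algebras; dualizing (or applying it to $\widehat A$ in place of $A$) it identifies $\OO_k(A)$ with $k[\widehat A]$. Hence
\[
\H^2_{\ell}(\OO_k(A)) \cong \H^2_{\ell}(k[\widehat A]) \, .
\]
Now $k[\widehat A]$ is a cocommutative Hopf algebra, so by the general fact recalled in Section~\ref{lazy-cohom} its lazy cohomology coincides with Sweedler's Hopf algebra cohomology; in the group-algebra case this is isomorphism~\eqref{group-cohom}, giving
\[
\H^2_{\ell}(k[\widehat A]) \cong H^2(\widehat A, k^{\times}) \, .
\]
Combining these with~\eqref{HlG} yields $\H^2_{\ell}(A) \cong H^2(\widehat A, k^{\times})$, as desired.

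I expect the only real point requiring care to be the bookkeeping of which group and which dual appears where: the Fourier transform sends $k[A]$ to functions on $\widehat A$, so one must apply it with $\widehat A$ in the role of the group in order to land on $\OO_k(A)$, and then read off $\widehat A$ (not $A$) as the group whose cohomology appears. A secondary point is to confirm that every isomorphism in the chain is an isomorphism of the relevant cohomology groups and not merely of Hopf algebras; this follows because $\H^2_{\ell}$ and $\H^2_{\tw}$ are functorial with respect to Hopf algebra isomorphisms, which is immediate from their definitions in terms of $\Reg$, $\delta^1$, and the invariance conditions. Everything else is a direct application of results already established, so no genuinely hard estimate or construction is needed; the statement is essentially a translation of the group-algebra computation~\eqref{group-cohom} through Fourier duality.
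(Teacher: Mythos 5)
Your proof is correct and follows essentially the same route as the paper: the chain $\H^2_{\ell}(A) \cong \H^2_{\ell}(\OO_k(A)) \cong \H^2_{\ell}(k[\widehat{A}]) \cong H^2(\widehat{A}, k^{\times})$ via~\eqref{HlG}, the Fourier transform (applied to $\widehat{A}$), and~\eqref{group-cohom} is exactly the argument given there. Your extra care about which group sits where in the Fourier duality, and about functoriality of $\H^2_{\ell}$ under Hopf algebra isomorphisms, is sound but not spelled out in the paper either.
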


\pf
We have the sequence of isomorphisms
\[\H^2_{\ell}(A) \cong \H^2_{\ell}(\OO_k(A)) \cong
\H^2_{\ell}(k[\widehat{A}]) \cong H^2(\widehat{A}, k^{\times}) \, .
\]
The first isomorphism follows from~\eqref{HlG}, 
the second one from the discrete Fourier transform,
and the last one from~\eqref{group-cohom}.
\epf

\subsection{Explicit formulas}\label{subsec_explicit_abelian}
The easily-remembered isomorphism of Proposition~\ref{prop_abelian_case} can be made explicit. 
For the proof we used an isomorphism between $A$ and~$\widehat{A}$. 
Following~\cite[Prop.~3]{Mov}, 
we exhibit a direct isomorphism between $\H^2_{\ell}(A)$ and $H^2(\widehat{A}, k^\times)$
as follows.

For $\rho\in \widehat{A}$, let $e_\rho$ be the corresponding idempotent in~$k[A]$. 
To a $k^{\times}$-valued two-cocycle~$c$ on~$\widehat{A}$ we associate the element 
\[
F= \sum_{\rho, \sigma} \, c(\rho, \sigma) \, e_\rho \otimes e_\sigma \in k[A] \otimes k[A] \, .
\]
Then $F$ is a twist for $k[A]$.
Conversely, given a twist $F$, define $c$ by 
\[
c(\rho, \sigma)= (\rho \otimes \sigma)(F) \, .
\] 
Then $c$ is a $k^{\times}$-valued two-cocycle on~$\widehat{A}$.

It is easily checked that gauge equivalence of twists corresponds 
to cohomological equivalence of two-cocycles. 
Moreover, the group of two-cocycles~$c$ 
modulo coboundaries is equal to~$H^2(\widehat{A}, k^\times)$.
Thus we have the desired explicit isomorphism.

Next, we note that since $k^{\times}$ is divisible, 
the universal coefficient theorem and a well-known computation of 
the second cohomology of an abelian group imply that
\[
H^2(\widehat{A}, k^{\times}) 
\cong \Hom(H_2(\widehat{A},\ZZ),k^{\times}) 
\cong \Hom(\Lambda_{\ZZ}^2 \widehat{A},k^{\times}) \, .
\]

As a result, with our assumption on $k$, the second cohomology group 
turns out to be the group of $k^{\times}$-valued alternating bilinear forms on~$\widehat{A}$. 
This can be made explicit as follows: 
the alternating form corresponding to a two-cocycle~$c$ is 
\[
b(\rho, \sigma)= \frac{c(\sigma, \rho)}{c(\rho, \sigma)} \, ,
\]
where $\rho, \sigma \in \widehat{A}$.
Thus $b$ measures the symmetry default of~$c$. 

Just as $c$ defines a twist $F \in k[A] \otimes k[A]$, the bilinear form $b$ allows us to introduce 
\begin{equation}\label{Rb}
R(A,b)= \sum_{\sigma, \tau \in \widehat{A}} \, b(\sigma, \tau) \, e_\sigma \otimes e_\tau \,  .
\end{equation}
The element $R(A,b) \in k[A] \otimes k[A]$ is invertible and we have 
$R(A,b)= F_{21}  F^{-1}$. 
The two-tensor $R(A,b)$ determines~$F$ up to gauge equivalence as follows.

\begin{prop}
If $F, F'$ are twists for $k[A]$ such that $F'_{21}  F'{}^{-1} = F_{21}  F^{-1}$, 
then $F$ and $F'$ are gauge equivalent.
\end{prop}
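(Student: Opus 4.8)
The plan is to pass through the dictionary of Section~\ref{subsec_explicit_abelian} between twists on $k[A]$ and $k^{\times}$-valued two-cocycles on $\widehat{A}$, under which gauge equivalence of twists becomes cohomological equivalence of cocycles. The first step is to read off what the hypothesis says in that language. Writing $c(\rho,\sigma) = (\rho\otimes\sigma)(F)$ and $c'(\rho,\sigma) = (\rho\otimes\sigma)(F')$ for the cocycles attached to $F$ and $F'$, and letting $b,b'$ be their associated alternating forms, the identity $R(A,b) = F_{21}F^{-1}$ recalled just before the statement gives $F_{21}F^{-1} = R(A,b)$ and likewise $F'_{21}F'{}^{-1} = R(A,b')$. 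Since the two-tensors $e_\rho\otimes e_\sigma$ are linearly independent in $k[A]\otimes k[A]$, the hypothesis $F'_{21}F'{}^{-1} = F_{21}F^{-1}$ is equivalent to the coefficientwise equality $b = b'$ of the two alternating forms.

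It then remains to show that two cocycles with the same alternating form are cohomologous, which by the dictionary is exactly the desired gauge equivalence of $F$ and $F'$. Here I would form the quotient cocycle $d = c\,(c')^{-1}$, a $k^{\times}$-valued two-cocycle on $\widehat{A}$ whose attached alternating form is $b/b' \equiv 1$; that is, $d$ is \emph{symmetric}. Symmetric two-cocycles modulo coboundaries classify the \emph{abelian} extensions of $\widehat{A}$ by $k^{\times}$, and hence form the group $\mathrm{Ext}^1_{\ZZ}(\widehat{A},k^{\times})$, which sits as the kernel in the universal coefficient sequence behind the isomorphism $H^2(\widehat{A},k^{\times}) \cong \Hom(\Lambda_{\ZZ}^2\widehat{A},k^{\times})$ of Section~\ref{subsec_explicit_abelian}. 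Because $k^{\times}$ is divisible, it is injective as a $\ZZ$-module, so this $\mathrm{Ext}^1$ term vanishes; therefore $d$ is a coboundary and $c,c'$ are cohomologous. (Equivalently, one may simply invoke the explicit isomorphism of Section~\ref{subsec_explicit_abelian}: the map $[c]\mapsto b$ is precisely that isomorphism, so its injectivity gives $[c]=[c']$ from $b=b'$ directly.)

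The single substantive point is the vanishing of this symmetric part, i.e.\ the injectivity of $[c]\mapsto b$, and this is exactly where the divisibility of $k^{\times}$ (equivalently, the hypothesis that $k$ is algebraically closed) enters in an essential way; over a non-closed field the $\mathrm{Ext}^1$ contribution generally survives, which is the source of the rationality corrections treated in Section~\ref{sec_rationality}. Everything else—the coefficientwise evaluation of $F_{21}F^{-1}$ and the back-and-forth between the twist and cocycle pictures—is routine bookkeeping with the correspondence already in place.
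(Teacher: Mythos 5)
Your proof is correct and follows essentially the same route as the paper's: both reduce the hypothesis to the statement that the quotient twist (equivalently, the quotient cocycle) is symmetric, and then use the injectivity of the map sending a cohomology class to its alternating form --- which rests on the divisibility of $k^{\times}$ --- to conclude triviality in $\H^2_{\ell}(A)$ and hence gauge equivalence. The only difference is presentational: the paper works with the twist $f = F^{-1}F'$ and invokes the isomorphism of Proposition~\ref{prop_abelian_case}, while you work with the cocycle $c(c')^{-1}$ and spell out the underlying $\mathrm{Ext}^1$ vanishing explicitly.
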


In Section~\ref{subsec_braidings} we shall give an analogous result for an arbitrary group.

\pf
The result can be deduced from the proof of~\cite[Lemma~5.7]{Ge}; here is a direct argument.
Consider the twist $f = F^{-1} F'$. It is symmetric in the sense that $f_{21} = f$.
The cocycle corresponding to~$f$ has an associated alternating form~$b_0$
such that $R(A, b_0)= f_{21}  f^{-1}$. Since $f$ is symmetric, we have $R(A, b_0) = 1 \otimes 1$, 
which implies that $b_0(\sigma, \tau) = 1$ for all $\sigma, \tau \in \widehat{A}$. 
Therefore, under the isomorphism of Proposition~\ref{prop_abelian_case}, the twist~$f$
represents the trivial element of~$\H^2_{\ell}(A)$. 
Consequently, $F$ and $F'$ represent the same element of~$\H^2_{\ell}(A)$,
which by Proposition~\ref{prop-Hgaugequiv} and the equality $\Int_k(A) = \Inn(A)$
yields the conclusion.
\epf

We now end these computational remarks with a comment on the inverse bijection, 
taking a $k^{\times}$-valued alternating bilinear form~$b$ on~$\widehat{A}$ 
to the class of a two-cocycle~$c$ modulo coboundaries. 
One observes that $b$ can itself be viewed as a two-cocycle on~$\widehat{A}$ whose
corresponding bilinear form is~$b^2$. So, in order to find~$c$, 
we look for a ``square root'' of~$b$. 
This is easily achieved when $A$, hence~$\widehat{A}$, has odd order: 
in this case $\rho\mapsto \rho/2$ is an automorphism of~$\widehat{A}$ and we may set 
\[
	c(\rho, \sigma) = b(\rho/2, \sigma) = b(\rho, \sigma/2) = {b(\rho, \sigma)}^{1/2} \, .
\]
for $\rho, \sigma \in \widehat{A}$.
The square root in the previous formula is uniquely defined since $b$ takes its values 
in a group of roots of unity of odd order. 
It is trivial to check that the bilinear form corresponding to this
cocycle~$c$ is indeed~$b$.

\subsection{Non-degenerate bilinear forms \& cocycles}

Let $b$ be a $k^{\times}$-valued alternating bilinear form on $\widehat{A}$. 
By~\eqref{Rb} it defines an element $R(A,b) \in k[A] \otimes k[A]$. 
We shall say that $b$ is {\em minimal} if there are no proper subgroups $B \subset A$ 
such that $R(A,b)\in k[B] \otimes k[B]$.

\begin{prop}\label{prop_minimal_is_degenerate}
The alternating bilinear form $b$ is minimal if and only if it is non-degenerate.
\end{prop}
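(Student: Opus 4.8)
The plan is to translate the membership condition ``$R(A,b) \in k[B] \otimes k[B]$'' into a statement about the radical of $b$, by first describing the subalgebra $k[B] \subset k[A]$ in terms of the primitive idempotents $e_\sigma$ ($\sigma \in \widehat{A}$) and then applying Pontryagin duality. Throughout I would use the annihilators: for $B \le A$ set $B^\perp = \{\chi \in \widehat{A} : \chi|_B = 1\} \le \widehat{A}$, and for $C \le \widehat{A}$ set $C^\perp = \{a \in A : \chi(a)=1 \text{ for all } \chi \in C\} \le A$. Since $k$ is algebraically closed of characteristic zero, the pairing $A \times \widehat{A} \to k^\times$ is perfect, so $(\cdot)^\perp$ is an inclusion-reversing bijection between subgroups of $A$ and of $\widehat{A}$ with $(B^\perp)^\perp = B$; in particular $B$ is proper if and only if $B^\perp \neq \{1\}$. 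I also recall the radical $\operatorname{rad}(b) = \{\rho \in \widehat{A} : b(\rho,\sigma)=1 \text{ for all }\sigma\}$; because $b$ is alternating it is skew, so its left and right radicals coincide, and $b$ is non-degenerate exactly when $\operatorname{rad}(b)=\{1\}$.

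The key step is to locate $k[B]$ in the idempotent basis of $k[A]$. A direct computation of the primitive idempotent $f_\psi = \frac{1}{|B|}\sum_{g \in B}\psi(g^{-1})\,g$ of $k[B]$ (for $\psi \in \widehat{B}$) in terms of the $e_\sigma$ yields
\[
f_\psi = \sum_{\sigma|_B = \psi} e_\sigma \, ,
\]
the sum being over the fibre of the restriction map $\widehat{A}\to\widehat{B}$, whose kernel is $B^\perp$. Hence $k[B]$ is precisely the set of elements $\sum_\sigma c_\sigma e_\sigma$ whose coefficient $c_\sigma$ depends only on the coset $\sigma B^\perp$. Consequently $R(A,b)=\sum_{\sigma,\tau} b(\sigma,\tau)\,e_\sigma\otimes e_\tau$ lies in $k[B]\otimes k[B]$ if and only if the function $(\sigma,\tau)\mapsto b(\sigma,\tau)$ is constant on cosets of $B^\perp\times B^\perp$.

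Next I would unwind this last condition using bilinearity. Requiring $b(\sigma\rho,\tau\rho')=b(\sigma,\tau)$ for all $\sigma,\tau\in\widehat{A}$ and $\rho,\rho'\in B^\perp$ and specializing one argument to the trivial character shows it is equivalent to $b(\rho,\cdot)=1$ and $b(\cdot,\rho)=1$ for every $\rho\in B^\perp$, that is, to $B^\perp\subseteq\operatorname{rad}(b)$; the converse is immediate by expanding $b(\sigma\rho,\tau\rho')$. By the duality above, $B^\perp\subseteq\operatorname{rad}(b)$ is equivalent to $B\supseteq\operatorname{rad}(b)^\perp$. Thus the subgroups $B$ with $R(A,b)\in k[B]\otimes k[B]$ are exactly those containing $B_0:=\operatorname{rad}(b)^\perp$, so $B_0$ is the smallest such subgroup. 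Therefore $b$ is minimal — meaning no proper $B$ works — precisely when $B_0=A$, which since $(\cdot)^\perp$ is a bijection means $\operatorname{rad}(b)=\{1\}$, i.e. $b$ is non-degenerate.

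I expect the only genuine computation to be the idempotent identity $f_\psi=\sum_{\sigma|_B=\psi}e_\sigma$ together with the short bilinearity bookkeeping; everything else is formal Pontryagin duality. The main conceptual point, and where I would be most careful, is recognizing that membership in $k[B]\otimes k[B]$ records exactly which characters lie in the radical of $b$, via the correspondence $B\leftrightarrow B^\perp$.
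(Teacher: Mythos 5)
Your argument is correct, and in substance it is the paper's second (``alternative'') proof: the identity $f_\psi=\sum_{\sigma|_B=\psi}e_\sigma$ is exactly the computation $e_\kappa=\sum_{\sigma\in p^{-1}(\kappa)}e_\sigma$ in Lemma~\ref{lem-restriction}, and your annihilator/radical bookkeeping simply makes explicit the final step --- that $b$ descends to a bilinear form on a proper quotient $\widehat{B}$ of $\widehat{A}$ precisely when $\operatorname{rad}(b)\neq\{1\}$ --- which the paper leaves to the reader. The paper's primary proof is different and shorter: it considers the map $f:\OO_k(A)^{\cop}\to k[A]$, $f(x)=(x\otimes\id)(R(A,b))$, observes that minimality of $b$ amounts to surjectivity of $f$ while non-degeneracy amounts to injectivity, and concludes by equality of dimensions; your route costs more bookkeeping but buys the sharper statement that the smallest subgroup $B$ with $R(A,b)\in k[B]\otimes k[B]$ is exactly $\operatorname{rad}(b)^{\perp}$.
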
 

One way to see this is by considering the map $f : \OO_k(A)^{\cop} \to
k[A]$ defined by $R(A, b)$, that is by $f(x) = (x \otimes \id)(R(A,
b))$. It easy to see that $R(A, b)$ is minimal if and only if $f$ is
surjective, and that $b$ is non-degenerate if and only if $f$ is
injective. For reasons of dimension, these conditions are equivalent.

Alternatively, Proposition \ref{prop_minimal_is_degenerate} can
deduced from the following lemma, which we will need later anyway. It
asserts that the constructions of the previous paragraph are in a
sense natural in~$A$.

\begin{lem}\label{lem-restriction}
Let $B$ be any subgroup of $A$, and let $p: \widehat{A} \to \widehat{B}$ be the restriction map. 
Given a alternating bilinear form~$b'$ on~$\widehat{B}$, 
define $b(\sigma, \tau)= b'(p(\sigma), p(\tau))$ on~$\widehat{A}$. 
Then $R(B, b') = R(A,b)$ in~$k[A] \otimes k[A]$.

Conversely, if $b$ is any alternating bilinear form on $\widehat{A}$ 
such that $R(A,b)$ belongs to $k[B] \otimes k[B]$, 
then $b(\sigma, \tau) = b'(p(\sigma), p(\tau))$ for some form $b'$ on $\widehat{B}$.
\end{lem}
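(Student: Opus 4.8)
The plan is to reduce both assertions to a single fact about how the primitive idempotents of $k[B]$ decompose inside $k[A]$. Write $\{e_\rho\}_{\rho \in \widehat{A}}$ and $\{f_\psi\}_{\psi \in \widehat{B}}$ for the orthogonal families of primitive idempotents of $k[A]$ and $k[B]$, normalized so that a character viewed as an algebra homomorphism to $k$ satisfies $\rho(e_\sigma) = \delta_{\rho,\sigma}$ and $\psi(f_\phi) = \delta_{\psi,\phi}$; this is the same normalization used in the paper, since $b(\rho,\sigma) = (\rho\otimes\sigma)(F)$ extracts the coefficient of $e_\rho \otimes e_\sigma$. Because $k$ is algebraically closed, $k^{\times}$ is divisible, so the restriction map $p : \widehat{A} \to \widehat{B}$ is surjective and its kernel consists of the characters of $A$ trivial on $B$. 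First I would establish the key formula
\[
f_\psi = \sum_{\rho \in p^{-1}(\psi)} e_\rho \qquad (\psi \in \widehat{B}),
\]
as an identity in $k[A] \supset k[B]$. Inserting the explicit expression $e_\rho = \frac{1}{|A|}\sum_{g\in A} \rho(g)^{-1} g$ and the analogous one for $f_\psi$, the identity reduces to the orthogonality relation $\sum_{\chi \in \ker p} \chi(g)^{-1} = [A:B]$ for $g \in B$ and $0$ otherwise. I expect this character-theoretic computation to be the main (and essentially only) piece of real content.

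Granting the formula, the first assertion is a direct substitution. Plugging $f_\psi = \sum_{\rho \in p^{-1}(\psi)} e_\rho$ into $R(B,b') = \sum_{\psi,\phi} b'(\psi,\phi)\, f_\psi \otimes f_\phi$ and regrouping the resulting sum over pairs $(\rho,\tau) \in \widehat{A}\times\widehat{A}$ according to $(\psi,\phi) = (p(\rho), p(\tau))$ yields
\[
R(B,b') = \sum_{\rho,\tau \in \widehat{A}} b'(p(\rho),p(\tau))\, e_\rho \otimes e_\tau = \sum_{\rho,\tau} b(\rho,\tau)\, e_\rho \otimes e_\tau = R(A,b),
\]
the last step being the definition $b(\sigma,\tau) = b'(p(\sigma),p(\tau))$. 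Note that this already contains Proposition~\ref{prop_minimal_is_degenerate} in the degenerate direction, since a nontrivial radical of $b$ produces a proper $B$ for which $R(A,b)$ descends.

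For the converse I would use that $R(A,b) \in k[B] \otimes k[B]$ forces it to lie in the span of the orthogonal idempotents $f_\psi \otimes f_\phi$, which form a basis of $k[B] \otimes k[B]$; thus $R(A,b) = \sum_{\psi,\phi \in \widehat{B}} c_{\psi,\phi}\, f_\psi \otimes f_\phi$ for unique scalars $c_{\psi,\phi} \in k$ (given by $c_{\psi,\phi} = (\psi \otimes \phi)(R(A,b))$). Substituting the key formula once more gives $R(A,b) = \sum_{\rho,\tau} c_{p(\rho),p(\tau)}\, e_\rho \otimes e_\tau$, and comparing with $R(A,b) = \sum_{\rho,\tau} b(\rho,\tau)\, e_\rho \otimes e_\tau$ — legitimate because the $e_\rho \otimes e_\tau$ are linearly independent, being the primitive idempotents of $k[A]\otimes k[A]$ — yields $b(\rho,\tau) = c_{p(\rho),p(\tau)}$. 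Hence $b(\rho,\tau)$ depends only on $(p(\rho),p(\tau))$, and setting $b'(\psi,\phi) = c_{\psi,\phi}$ gives the required form. It remains to verify that $b'$ is alternating and bilinear; this follows from the corresponding properties of $b$ together with the surjectivity of $p$, since every pair $(\psi,\phi)$ and every product $\psi_1\psi_2$ lifts along $p$, and the multiplicative identities for $b'$ are then read off from those for $b$.
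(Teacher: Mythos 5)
Your proof is correct and follows essentially the same route as the paper's: the whole argument rests on the idempotent decomposition $f_\psi = \sum_{\rho \in p^{-1}(\psi)} e_\rho$ proved by character orthogonality, after which both directions reduce to comparing coefficients in the basis $e_\rho \otimes e_\tau$ (i.e.\ to the observation that $R(A,b) \in k[B]\otimes k[B]$ exactly when $b$ is constant on the fibres of~$p$). You merely spell out a few points the paper leaves implicit, such as the verification that $b'$ is alternating and bilinear.
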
 

\begin{proof}
We have 
\[
R(A,b) = \sum_{\sigma, \tau \in \widehat{A}} \, b(\sigma, \tau) \, e_\sigma \otimes e_\tau
\quad\text{and}\quad
R(B, b') = \sum_{\kappa, \lambda \in \widehat{B}} \, b'(\kappa, \lambda) \, e_\kappa \otimes e_\lambda \, .
\]
The equality $R(B, b') = R(A,b)$ follows from 
\[
e_\kappa = \sum_{\sigma \in p ^{-1} (\kappa)}  \, e_\sigma \, .
\]
This last equality is obtained by an elementary computation with characters. 
It also makes the converse clear, for it is now apparent that $R(A,b)$ lies in $k[B] \otimes k[B]$ 
if and only if $b$ is constant on the fibres of~$p$.
\end{proof}

It is customary to talk about \emph{non-degenerate two-cocycles} 
when the associated bilinear form~$b$ is non-degenerate. 
We shall also speak of \emph{minimal two-cocycles} in the same sense. 
It turns out that the existence of a non-degenerate two-cocycle 
imposes severe restrictions on~$A$, as is shown in the following proposition, 
which is well-known (see \cite[Thm.~4.8]{Kar} or \cite[Lemma~5.2]{Da3}).

\begin{prop}\label{nondeg_square}
Any finite abelian group $A$ with a 
non-degenerate alternating bilinear form $b : A \times A \to k^{\times}$
is of symmetric type, i.e., of the form $A\cong A' \times A'$.
\end{prop}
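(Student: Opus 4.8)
The plan is to show that a finite abelian group $A$ carrying a non-degenerate alternating form $b : A \times A \to k^\times$ must split as $A' \times A'$, and I would proceed by induction on the order of $A$ together with a reduction to $p$-primary components. First I would observe that the radical of $b$ (the set of $a \in A$ with $b(a, \cdot) \equiv 1$) is trivial by non-degeneracy, and that $b$ respects the primary decomposition $A = \prod_p A_p$: since $b$ takes values in a group of roots of unity and elements of coprime order are orthogonal under any such alternating pairing, the restriction of $b$ to each Sylow $p$-subgroup $A_p$ is again non-degenerate and alternating, and distinct primary components are mutually orthogonal. Hence it suffices to treat the case where $A$ is a finite abelian $p$-group.

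So assume $A$ is a $p$-group. The key mechanism is the \emph{symplectic basis} (or hyperbolic splitting) argument. Because $b$ is alternating and non-degenerate, I would pick an element $x \in A$ of maximal order $p^n$; non-degeneracy produces some $y$ with $b(x,y)$ a primitive $p^n$-th root of unity, and a short computation shows the order of $y$ is also $p^n$ and that the subgroup $H = \langle x \rangle \times \langle y \rangle \cong \ZZ/p^n \times \ZZ/p^n$ on which $b$ restricts non-degenerately is a direct summand of $A$ isometric to a hyperbolic plane. The main technical step is the orthogonal complement: I would verify that $A = H \oplus H^{\perp}$, where $H^{\perp} = \{a : b(a,h)=1 \text{ for all } h \in H\}$, and that $b$ restricted to $H^{\perp}$ is still alternating and non-degenerate. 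Granting this splitting, the block $H \cong \ZZ/p^n \times \ZZ/p^n$ is already of the form $A'\times A'$, and the induction hypothesis applied to $H^{\perp}$ (of strictly smaller order) finishes the primary case; reassembling the primary pieces gives $A \cong A' \times A'$ globally.

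The hard part will be justifying the orthogonal decomposition $A = H \oplus H^\perp$ cleanly in the mixed-exponent setting, since for abelian $p$-groups that are not elementary the naive dimension count over a field is unavailable and one must argue with the divisibility structure of the annihilators directly. The crucial ingredient is that because $x$ was chosen of maximal order, the pairing identifies $H$ with a self-dual summand, so that the natural projection $A \to \Hom(H, k^\times)$ induced by $b$ is surjective with kernel $H^\perp$, and the maximality of the order guarantees that $H \cap H^\perp = 0$ and that the induced form on $H^\perp$ stays non-degenerate. I would handle this by setting up the adjoint homomorphism $a \mapsto b(a,-)$ from $A$ to its Pontryagin dual $\widehat{A}$, noting it is an isomorphism by non-degeneracy, and tracking how it restricts to $H$ and $H^\perp$; this replaces any dimension argument by the self-duality of a hyperbolic plane over $\ZZ/p^n$. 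Since the statement is recorded as well known with references to~\cite[Thm.~4.8]{Kar} and~\cite[Lemma~5.2]{Da3}, I would expect to either cite those directly or reproduce the symplectic-basis induction sketched above.
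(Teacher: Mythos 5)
Your proposal is correct, and it is worth noting that the paper itself offers no proof of this proposition at all: it simply records the statement as well known and points to \cite[Thm.~4.8]{Kar} and \cite[Lemma~5.2]{Da3}. What you have written is a complete version of the standard symplectic-basis argument that those references contain, so in effect you are supplying the proof the paper delegates. Each step checks out: orthogonality of distinct primary components follows, as you say, from the fact that the order of $b(a,a')$ divides both $\operatorname{ord}(a)$ and $\operatorname{ord}(a')$; the existence of $y$ with $b(x,y)$ a primitive $p^n$-th root of unity follows because the adjoint map $A \to \widehat{A}$ is injective, so the character $b(x,-)$ has order exactly $p^n$; and the order of $y$ is then forced to be $p^n$ by maximality. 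The step you flag as the hard part, $A = H \oplus H^{\perp}$, is in fact clean by exactly the mechanism you describe: non-degeneracy of $b|_H$ makes the composite $H \hookrightarrow A \to \Hom(H,k^{\times})$ bijective (both groups have order $|H|$), so the map $A \to \Hom(H,k^{\times})$ is surjective with kernel $H^{\perp}$ meeting $H$ trivially, and an element of $H^{\perp}$ orthogonal to all of $H^{\perp}$ is orthogonal to all of $A$ and hence trivial. No appeal to a dimension count or to the mixed-exponent structure beyond this is needed, so your induction closes without difficulty.
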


\section{The main theorem}\label{sec_support}

In this section we present our main results.
We again assume that the ground field~$k$ is algebraically closed.

The material in Sections~\ref{subsec_braided}, \ref{subsec_braidings}, \ref{Ronfunctionalg}
is known to the experts (see for example the unpublished manuscript~\cite{Da1}).
We include it for convenience.

\subsection{Quasi-triangular Hopf algebras}\label{subsec_braided}

Let $H$ be a Hopf algebra with coproduct~$\Delta$. Denote the 
\emph{opposite coproduct} of~$H$ by~$\Delta^{\op}$; 
by definition $\Delta^{\op}(a) = \Delta(a)_{21}$ for all $a \in H$.

Let $H$ be a Hopf algebra. A \emph{universal $R$-matrix} for~$H$
is an invertible element $R\in H\otimes H$ such that
\begin{equation}\label{R1}
\Delta^{\op}(a) =  R \, \Delta(a) \, R ^{-1}
\end{equation}
for all~$a \in H$, and
\begin{equation}\label{R2}
(\Delta \otimes \id_H)(R) = R_{13} \, R_{23}
\quad\text{and}\quad
(\id_H \otimes \Delta)(R) = R_{13} \, R_{12} \, .
\end{equation}
A \emph{quasi-triangular Hopf algebra} is a Hopf algebra equipped
with a universal $R$-matrix.
For more, see~\cite[Chap.~VIII]{Kas}.

Given a universal $R$-matrix~$R$ for~$H$, it is possible to construct another one
with the help of a twist on~$H$. More precisely, 
for any twist $F$ on~$H$, set  
\begin{equation*}
R_F= F_{21} \, R \, F^{-1} \, .
\end{equation*}
Then $R_F$ is a universal $R$-matrix for the Hopf algebra~$H^F$ 
obtained from~$H$ by twisting the coproduct following~\eqref{twisted-Delta}.
If the twist $F$ is \emph{invariant}, then as observed in Section~\ref{twist},
we have $\Delta^F = \Delta$ and therefore $H^F = H$. 
In this case $R_F$ is another universal $R$-matrix for~$H$.

If $F, F'$ are gauge equivalent invariant twists on~$H$, then the corresponding 
universal $R$-matrices $R_F, R_{F'}$ are conjugate. 
Indeed, if $F' = a \cdot F$ for some invertible element $a\in H$, then
\begin{equation}\label{gaugeR}
R_{F'} = (a\otimes a) \, R_F \, (a\otimes a)^{-1} \, .
\end{equation}
It follows that, if $F$ and $F'$ represent the same element of~$\H^2_{\tw}(H)$,
i.e., if $F' = a \cdot F$ for some invertible central element~$a$,
then $R_{F'} = R_F$.

In the sequel we shall also make use of Radford's work on \emph{minimal quasi-triangular Hopf algebras}:
in~\cite{Rad} it was proved that any quasi-triangular Hopf algebra~$H$ 
with universal $R$-matrix $R = \sum_i \, s_i \otimes t_i$ contains a unique smallest 
(finite-dimensional)
quasi-triangular Hopf subalgebra $H\langle R \rangle$ with the same universal $R$-matrix~$R$
(it is the smallest Hopf subalgebra of~$H$ containing the elements~$s_i$ and~$t_i$.)
He also proved that if $H$ is either commutative or cocommutative, then
$H\langle R \rangle$ is \emph{bicommutative}, i.e., both commutative and cocommutative;
see \cite[Prop.~13]{Rad}.

\subsection{Quasi-triangular structures on group algebras}\label{subsec_braidings}

Consider the case where $H= \kg$ is the algebra of a finite group~$G$.
Since $k[G]$ is cocommutative, we can equip it with the
trivial universal $R$-matrix $1\otimes 1$.
It follows from the discussion in Section~\ref{subsec_braided}
that if $F$ is a $G$-invariant twist on~$\kg$, then
\begin{equation}\label{twistedR}
R_F = F_{21} \, (1\otimes 1) \, F^{-1} = F_{21} \, F^{-1} 
\end{equation}
is a universal $R$-matrix for~$\kg$.  We have observed above that if
$F$ and $F'$ represent the same element of~$\H^2_{\tw}(H) =
\H^2_{\ell}(G)$, then $R_{F'} = R_F$.  Conversely, the universal
$R$-matrix $R_F = F_{21} \, F^{-1}$ determines~$F^{-1}$ up to gauge
equivalence.

\begin{prop}\label{RFdeterminesF}
When $F$ and $F'$ are invariant twists, then $R_{F'} = R_F$ if and
only if $F^{-1}$ and $F'{}^{-1}$ are gauge equivalent.
\end{prop}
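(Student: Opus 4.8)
The plan is to prove the two implications separately, treating the ``if'' direction by a direct computation and reducing the ``only if'' direction to a statement about symmetric twists.

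First I would dispose of the easy implication: assume $F^{-1}$ and $F'^{-1}$ are gauge equivalent, say $F'^{-1} = a \cdot F^{-1} = (a \otimes a)\, F^{-1}\, \Delta(a^{-1})$ for some invertible $a$. Inverting gives $F' = \Delta(a)\, F\, (a^{-1} \otimes a^{-1})$, and since $k[G]$ is cocommutative we have $\Delta(a)_{21} = \Delta(a)$ and $(a^{-1}\otimes a^{-1})_{21} = a^{-1}\otimes a^{-1}$, so $F'_{21} = \Delta(a)\, F_{21}\, (a^{-1}\otimes a^{-1})$. Substituting into $R_{F'} = F'_{21}\, F'^{-1}$ and cancelling, I obtain $R_{F'} = \Delta(a)\, R_F\, \Delta(a)^{-1}$. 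It then suffices to observe that $\Delta(a)$ commutes with $R_F = F_{21}\, F^{-1}$: indeed $F$ is invariant, and applying the flip to the invariance relation $\Delta(x)\,F = F\,\Delta(x)$ shows (again by cocommutativity) that $F_{21}$ is invariant as well, so $\Delta(a)$ commutes with both factors of $R_F$. Hence $R_{F'} = R_F$.

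For the converse, suppose $R_F = R_{F'}$, i.e.\ $F_{21}\, F^{-1} = F'_{21}\, F'^{-1}$. Rearranging and using that the flip is an algebra automorphism, this is equivalent to $(F'^{-1}F)_{21} = F'^{-1}F$. Thus the element $c := F'^{-1} F$, which is an invariant twist because invariant twists form a group, is \emph{symmetric}: $c_{21} = c$. Now $F'^{-1} = c\,F^{-1}$, and since $F^{-1}$ is invariant one checks using~\eqref{act-on-twist} that $a \cdot F^{-1} = \bigl(a\cdot(1\otimes 1)\bigr)F^{-1}$ for every invertible~$a$. Consequently $F'^{-1} = a\cdot F^{-1}$ for some $a$ if and only if $c = a\cdot(1\otimes 1)$, that is, if and only if the symmetric invariant twist~$c$ is gauge equivalent to $1 \otimes 1$. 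This reduces the whole implication to a single clean statement.

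The main obstacle is therefore the remaining claim: \emph{every symmetric invariant twist on $k[G]$ is gauge equivalent to $1\otimes 1$}. This is the exact non-abelian analogue of the phenomenon already recorded in Section~\ref{abelian}, where a symmetric twist corresponds (via the antisymmetrization $b(\rho,\sigma)=c(\sigma,\rho)/c(\rho,\sigma)$) to the trivial alternating form and is hence a coboundary; see the argument using $R(A,b_0)=1\otimes 1$ in the proof of the abelian counterpart of this proposition. To import that computation into the non-commutative setting I would invoke Radford's theory of minimal quasi-triangular Hopf algebras~\cite{Rad}: since $k[G]$ is cocommutative, the minimal quasi-triangular Hopf subalgebra attached to an $R$-matrix is bicommutative, hence of the form $k[A]$ for an abelian subgroup $A$, which brings Proposition~\ref{prop_abelian_case} and Proposition~\ref{nondeg_square} into play; alternatively one can appeal directly to the Etingof--Gelaki classification of such twists as in~\cite[Lemma~5.7]{Ge} and~\cite{EG}. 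Controlling how a $G$-invariant twist is related to a twist supported on the relevant abelian subgroup is the delicate point, and it is there that I expect the real work of the proof to lie.
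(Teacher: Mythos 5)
Your argument follows the paper's proof essentially verbatim: the ``if'' direction is the same direct computation using invariance of $R_F$, and your reduction of the ``only if'' direction to the triviality (up to gauge) of the symmetric invariant twist $F'^{-1}F$ is exactly the paper's reduction via $f=F^{-1}F'$. The remaining claim that a symmetric twist on $k[G]$ is gauge equivalent to $1\otimes 1$ is settled in the paper simply by citing Etingof--Gelaki \cite[Cor.~3]{EG} --- one of the routes you name --- so the ``real work'' you anticipate at that point is in fact just an appeal to that known result.
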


\pf We start with the ``only if'' part. Since $F, F'$ are invariant
twists, so is $f = F^{-1} F'$. The equality $R_{F'} = R_F$ implies
that $f$ is a symmetric twist, i.e., $f_{21} = f$.  It follows
from~\cite[Cor.~3]{EG} that $f$ is gauge equivalent to~$1\otimes 1$.
In other words, there is an invertible element $a\in H$ such that $f =
(a\otimes a) \, \Delta(a^{-1})$.  Therefore,
\[
F^{-1}  = f F'{}^{-1} 
= (a\otimes a) \, \Delta(a^{-1}) \, F'{}^{-1}
= (a\otimes a) \, F'{}^{-1} \, \Delta(a^{-1}) \, .
\]
It follows that $F^{-1}$ and $F'{}^{-1}$ are gauge equivalent.

The converse is trivial: if $F^{-1} = a \cdot F'{}^{-1}$ as above, 
then 
\[
R_F= \Delta(a) \, R_{F'} \, \Delta(a^{-1}) = R_{F'}
\]
by invariance of~$R_{F'}$.
\epf

For any quasi-triangular Hopf algebra~$H$ with universal $R$-matrix $R= \sum\, s_i \otimes t_i$, 
the \emph{Drinfeld element}~$u_R$ is defined to be 
\begin{equation}\label{def-u}
u_R = \sum\, S(t_i)s_i \, ,
\end{equation}
where $S$ is the antipode of~$H$ (see~\cite[Sect.~2]{Dr1}, \cite[Sect.~VIII.4]{Kas}). 
We need the following lemma, which has been observed in the proof
of~\cite[Prop.~3.4]{EG2}. We give a proof for the sake of completeness.

\begin{lem}\label{lem-u}
Let $G$ be a finite group and~$F$ an invariant twist on~$k[G]$.
If $R_F = F_{21} \, F^{-1}$, then
$u_{R_F} = 1$.
\end{lem}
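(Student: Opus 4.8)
The plan is to compute the Drinfeld element $u_{R_F}$ directly from its definition~\eqref{def-u} using the explicit form $R_F = F_{21} \, F^{-1}$. Write $F = \sum_j \, a_j \otimes b_j$ and $F^{-1} = \sum_i \, c_i \otimes d_i$, so that $R_F = \sum_{i,j} \, b_j c_i \otimes a_j d_i$. Then by definition
\[
u_{R_F} = \sum_{i,j} \, S(a_j d_i) \, b_j c_i = \sum_{i,j} \, S(d_i) S(a_j) \, b_j c_i \, ,
\]
since $S$ is an algebra anti-homomorphism. The key idea is that the invariance condition~\eqref{A2G} for $F$, namely $\Delta(g) \, F = F \, \Delta(g)$ for all $g \in G$, together with the normalization properties of twists, should force the ``cross terms'' to collapse to~$1$.

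The main step I would carry out is to exploit the interaction between the antipode and the twist. First I would recall the standard fact (which follows from the twist axiom~\eqref{twist1} and Lemma~\ref{lem-norm}) that for a twist one may define $v = \sum_j \, S(a_j) b_j$ and $v^{-1} = \sum_i \, c_i S(d_i)$, and that these are mutually inverse; this is the analogue for twists of the relation between the antipode and the $R$-matrix components. The cleanest route is to observe that invariance makes $R_F$ a \emph{triangular} $R$-matrix in the sense that $(R_F)_{21} = R_F^{-1}$: indeed $(F_{21} F^{-1})_{21} = F \, (F^{-1})_{21} = F \, (F_{21})^{-1} = (F_{21} F^{-1})^{-1}$ using that $F$ and $F_{21}$ commute, which holds because $F$ is invariant and cocommutativity of $k[G]$ gives $\Delta^{\op} = \Delta$. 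For such a triangular structure on a cocommutative Hopf algebra, the Drinfeld element is central and squares to the identity, and I expect the sharper conclusion $u_{R_F} = 1$ to come from combining $S^2 = \id$ (true for $k[G]$) with the normalization.

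Concretely, I would use the general identity $S(u_R) = u_R$ valid for any ribbon-type situation together with the fact that for $R_F = F_{21} F^{-1}$ arising from an \emph{invariant} twist on a cocommutative Hopf algebra, one has $u_{R_F} = \mu \, (S \otimes S)(F^{-1}) \cdot (\text{antipode-folded } F)$, which telescopes. The most efficient argument simply inserts $R_F = F_{21} F^{-1}$ into~\eqref{def-u} and applies the counit-normalization of Lemma~\ref{lem-norm}: writing out $u_{R_F}$ and using that $\sum_j S(a_j) b_j$ and $\sum_i c_i S(d_i)$ are inverse to each other (a consequence of the antipode axiom applied to $F F^{-1} = 1 \otimes 1$), the product collapses to the unit. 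Since $k[G]$ has $S^2 = \id$, no correction factor survives.

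The hard part will be making rigorous that the two ``antipode-folded'' sums $\sum_j S(a_j) b_j$ and $\sum_i c_i S(d_i)$ are genuinely mutually inverse, rather than merely formally so; this requires applying $S$ carefully to the identity $\sum_{i,j} a_j c_i \otimes b_j d_i = 1 \otimes 1$ coming from $F F^{-1} = 1 \otimes 1$, then contracting via multiplication $m \circ (S \otimes \id)$ and $m \circ (\id \otimes S)$ in the right order. I would verify this step explicitly, since everything else is bookkeeping once it is established; the invariance of $F$ is what guarantees these contractions land in the centre and give scalars, and the normalization (automatic up to the harmless scalar isolated in Lemma~\ref{lem-norm}) pins the scalar down to~$1$.
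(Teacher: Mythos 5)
Your reduction to the triangularity relation $(R_F)_{21}=R_F^{-1}$ and the insertion of $R_F=F_{21}F^{-1}$ into~\eqref{def-u} are fine, but the final ``telescoping'' step is where the entire content of the lemma lives, and it does not go through. Write $F=\sum_j a_j\otimes b_j$, $F^{-1}=\sum_i c_i\otimes d_i$ and set $Q=m(S\otimes\id)(F)=\sum_j S(a_j)b_j$. Invariance of~$F$ does make $Q$ central (conjugation by any $g\in G$ fixes $F$, hence fixes~$Q$), and contracting $F^{-1}F=1\otimes 1$ by $m(S\otimes\id)$ gives $Q^{-1}=\sum_i S(c_i)d_i$. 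Carrying out your computation with these facts,
\[
u_{R_F}=\sum_{i,j}S(d_i)S(a_j)b_jc_i
= Q\,\sum_i S(d_i)c_i
= Q\, S\Bigl(\sum_i S(c_i)d_i\Bigr)
= Q\,S(Q)^{-1},
\]
using $S^2=\id$. So the product does not collapse to~$1$: it collapses to $Q\,S(Q)^{-1}$, and proving $S(Q)=Q$ (i.e.\ that the class function determined by the central element $Q$ is invariant under inversion of conjugacy classes) is exactly as hard as the lemma itself. No purely formal argument of the kind you sketch can finish, because every hypothesis you actually invoke at the end --- $R$ normalized and triangular, $H$ cocommutative, $S^2=\id$, $u_R$ central and group-like --- is satisfied by the nontrivial triangular structure $R=\tfrac12(1\otimes 1+1\otimes g+g\otimes 1-g\otimes g)$ on $k[\Z/2]$, whose Drinfeld element is $u_R=g\neq 1$. (That $R$ is not of the form $F_{21}F^{-1}$, but nothing in your concluding manipulations distinguishes it.)

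The paper's proof accepts that the formal part only shows $u_{R_F}$ is a central group-like element, i.e.\ a central element of~$G$, and then brings in a genuinely non-formal input: the theorem of Aljadeff--Etingof--Gelaki--Nikshych that for unimodular~$H$ the element $u_{R_F}\rightharpoonup\lambda$ is again an integral of~$H^*$ whenever $\lambda$ is. Since the integrals of $k[G]^*$ are the scalar multiples of~$e_1$ and $u\rightharpoonup e_1$ is the characteristic function of $\{u^{-1}\}$, this forces $u_{R_F}=1$. To repair your proof you must supply an input of comparable strength --- an integral argument, a Frobenius--Schur indicator computation, or a categorical dimension count --- at the point where you currently assert that ``the product collapses to the unit.''
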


\pf 
Any Drinfeld element $u_R$ associated to a universal $R$-matrix~$R$ satisfies the relations
\begin{equation}\label{eqn-u}
S^2(a) = u_R \, a \, u_R^{-1} \quad\text{and}\quad
\Delta(u_R) = (R_{21}R)^{-1} \, (u_R \otimes u_R)
\end{equation}
for all $a \in H$; see~\cite[Prop.~2.1 and~3.2]{Dr1} or \cite[Prop.~VIII.4.1 and~VIII.4.5]{Kas}.
The antipode of any cocommutative Hopf algebra being involutive, it follows
from the first equation in~\eqref{eqn-u} that $u_{R_F}$ is a central element of~$k[G]$.
Moreover, for $R = R_F$, we have
\[
R_{21}R = F \, F^{-1}_{21} \, F_{21} \, F^{-1} = 1 \otimes 1 \, .
\]
It then follows from the second equation in~\eqref{eqn-u}
that $\Delta(u_{R_F}) = u_{R_F} \otimes u_{R_F}$.
In other words, $u_{R_F}$ is a central group-like element of~$k[G]$,
hence a central element of the group. 
To prove that $u_{R_F} = 1$, we apply \cite[Th.~3.4]{AEGN} to the case
where $H = k[G]$ and $J = F^{-1}$. Since $k[G]$ is unimodular (and cocommutative),
this theorem states that if $\lambda \in k[G]^*$ is a non-zero integral,
then so is 
$u_{R_F} \rightharpoonup \lambda = \sum_{\lambda} \, \lambda_1 \, \lambda_2(u_{R_F})$.
Now by definition of an integral in~$k[G]^*$, we have
$g \lambda(g) = \lambda(g) 1$ for all $g \in G$.
It follows that $\lambda = \gamma \, e_1$, 
where $e_1$ is the characteristic function of the unit in~$G$ and $\gamma$ is some scalar. 
We take $e_1$ as a non-zero integral in~$k[G]^*$. Now 
\[
(u_{R_F} \rightharpoonup e_1)(g) = \sum_{(e_1)} \, (e_1)_1(g) \, (e_1)_2(u_{R_F})
= e_1 (gu_{R_F})
\]
for all $g\in G$, which means that $u_{R_F} \rightharpoonup e_1$ 
is the characteristic function of the singleton~$\{u_{R_F}^{-1}\}$.
It is then clear that the latter can be an integral in~$k[G]^*$ only if~$u_{R_F}=1$.
\epf

For any universal $R$-matrix for~$k[G]$, we may consider the
minimal quasi-triangular Hopf subalgebra~$k[G]\langle R \rangle$
introduced in Section~\ref{subsec_braided}.
Since $k[G]$ is commutative, $k[G]\langle R \rangle$
is necessarily bicommutative.
Hence, $k[G]\langle R \rangle$ must be the group algebra of an abelian subgroup~$A$ of~$G$:
\[
k[G]\langle R \rangle = k[A] \, .
\]
Now, $k[A]$ is isomorphic to the algebra~$\OO_k(\widehat{A})$ 
of functions on~$\widehat{A} = \Hom(A,k^{\times})$, 
the braidings of which have a particularly simple form. 
We proceed to explain this next.

\subsection{Quasi-triangular structures on function algebras}\label{Ronfunctionalg}

We shall consider the Hopf algebra~$H$ of functions 
(resp.\ smooth functions, resp.\ regular functions,\dots) 
on a group~$G$ (resp.\ Lie group, resp.\ algebraic group,\dots). 
In this case a universal $R$-matrix is none other than a function~$b$ on $G \times G$
and the equations~\eqref{R1} and~\eqref{R2} take a particularly simple form. 
Equation~\eqref{R1} does not involve~$b$ since $H$ is commutative; 
it actually forces $H$ to be cocommutative so that the group~$G$ must be abelian. 
Equations~\eqref{R2} become the functional equations
\[
b(xy, z)= b(x,z) \, b(y,z)
\quad\text{and}\quad
b(x, yz) = b(x,y) \, b(x,z)
\]
for all $x,y,z \in G$.
Thus $b$, which is assumed to be invertible, is just a $k^\times$-valued bilinear form 
on the abelian group~$G$. In view of the material in Section~\ref{subsec_explicit_abelian}, 
we may ask when this form is alternating. 
We give a condition involving the Drinfeld element $u_R$ defined by~\eqref{def-u}.
It follows from this definition that $u_R$ becomes the function
\[
u_R(x) = b(x ^{-1}, x) = b(x,x)^{-1}
\]
for all $x\in G$. Therefore, $u_R = 1$ implies that $b$ is alternating, and conversely.

In the case where $A$ is an abelian group and $G = \widehat{A}$, we can
use the Fourier transform, as introduced in Section~\ref{subsec-fourier} 
and made explicit in Section~\ref{subsec_explicit_abelian}: 
it provides an isomorphism between the Hopf algebras~$\OO_k(G)$ and~$k[A]$. 
A bilinear form~$b$ on~$G$, viewed as a universal $R$-matrix for~$\OO_k(G)$, 
thus gives rise to a universal $R$-matrix for~$k[A]$; 
the latter is none other than~$R(A,b)$, as defined by~\eqref{Rb}.

When $b$ is alternating, we have proved in Section
\ref{subsec_explicit_abelian} that it corresponds to a two-cocycle
$c$, and in turn $c$ defines a twist $F$ on $k[A]$ such that $R_F =
R(A,b)$.  From Lemma~\ref{lem-u} we have the following.

\begin{lem}\label{lem-uR}
When $b$ is an alternating bilinear form on $\widehat{A}$, the
Drinfeld element of the universal $R$-matrix $R(A,b)$ is $1$. 
\end{lem}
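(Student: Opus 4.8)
The plan is to deduce Lemma~\ref{lem-uR} directly from the already-established machinery, connecting the two parallel descriptions of Drinfeld elements. The statement asserts that when $b$ is an alternating bilinear form on~$\widehat{A}$, the Drinfeld element of the universal $R$-matrix $R(A,b) \in k[A] \otimes k[A]$ equals~$1$. There are two natural routes, and I would present the shorter one while keeping the longer one in reserve.

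The first route exploits the discussion immediately preceding the lemma in Section~\ref{Ronfunctionalg}. Under the Fourier transform, the universal $R$-matrix $R(A,b)$ for~$k[A]$ corresponds to the bilinear form~$b$ viewed as a universal $R$-matrix for~$\OO_k(\widehat{A})$. We have just computed in that section that, on the function-algebra side, the Drinfeld element becomes the function $u_R(x) = b(x^{-1}, x) = b(x,x)^{-1}$, and that $u_R = 1$ holds precisely when $b$ is alternating. Since the Fourier transform is an isomorphism of Hopf algebras, it carries Drinfeld elements to Drinfeld elements (the Drinfeld element is defined purely in terms of the antipode and the $R$-matrix via~\eqref{def-u}, hence is preserved by any Hopf algebra isomorphism). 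Therefore the hypothesis that $b$ is alternating forces the Drinfeld element of~$R(A,b)$ to be~$1$.

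The second route, which the lemma's phrasing (``From Lemma~\ref{lem-u} we have the following'') seems to invite, is to apply Lemma~\ref{lem-u} directly. As recalled just above the statement, when $b$ is alternating it corresponds to a two-cocycle~$c$, which in turn defines a twist~$F$ on~$k[A]$ with $R_F = R(A,b)$; moreover all twists on an abelian group are automatically invariant. Lemma~\ref{lem-u} then applies verbatim to the finite group~$A$ and the invariant twist~$F$ with $R_F = F_{21} F^{-1}$, yielding $u_{R_F} = 1$, that is, $u_{R(A,b)} = 1$.

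I would write out the second route as the main argument, since it is the most economical and matches the lemma's introductory sentence; the only points to verify are that $F$ is invariant (immediate, as $k[A]$ is commutative and cocommutative, so every twist is invariant) and that $R_F = R(A,b)$ (already asserted in the preceding paragraph). The one step requiring a word of care is confirming that the $F$ produced from~$c$ genuinely satisfies $R_F = F_{21}F^{-1} = R(A,b)$ rather than, say, its inverse or its flip; this was arranged in Section~\ref{subsec_explicit_abelian} via the relation $R(A,b) = F_{21} F^{-1}$ recorded there, so no real obstacle remains. In short, the whole lemma is a one-line corollary of Lemma~\ref{lem-u} once the translation between~$b$, $c$, and~$F$ is invoked, and the main (minor) obstacle is merely bookkeeping the direction of the flip in $F_{21} F^{-1}$.
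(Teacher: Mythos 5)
Your main argument (the second route) is exactly the paper's: the sentence preceding the lemma in Section~\ref{Ronfunctionalg} recalls that an alternating $b$ corresponds to a two-cocycle $c$, hence to a twist $F$ on $k[A]$ (automatically invariant, as $k[A]$ is commutative) with $R_F = F_{21}F^{-1} = R(A,b)$, and the lemma is then an immediate application of Lemma~\ref{lem-u}. Your alternative first route, via the formula $u_R(x)=b(x,x)^{-1}$ on the function-algebra side and the Fourier transform, is also already present in the surrounding discussion and is equally valid, so there is nothing to correct.
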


\subsection{The map $\Theta$}\label{sub-support}

For any finite group~$G$,
let $\B(G)$ denote the set of pairs~$(A, b)$, where $A$ is an
abelian normal subgroup of~$G$ and $b : \widehat{A} \times \widehat{A}
\to k^{\times}$ is a $k^\times$-valued $G$-invariant non-degenerate
alternating bilinear form on~$\widehat{A}$.  The letter $\B$ is meant
to suggest either ``bilinear'' or ``braiding''. We see $\B(G)$ as a
pointed (in particular, non-empty) set, the distinguished element
being $(\{ 1 \}, 1)$, the trivial bilinear form on the trivial
subgroup. We write this element simply~$1$.

Let us define a map of sets 
\[
\Theta : \H^2_{\ell}(G) \to \B(G)
\]
as follows.  An invariant twist~$F$ on~$k[G]$ determines by
Formula~\eqref{twistedR} a universal $R$-matrix~$R_F=F_{21} F^{-1}$.
As observed in Section~\ref{subsec_braidings}, there is a unique
abelian subgroup~$A$ of~$G$ such that
\[
\kg\langle R_F\rangle = k[A] \, .
\] 
Moreover, $R_F$ corresponds to a bilinear form~$b$ on~$\widehat{A}$,
and $R_F = R(A,b)$.
We set $\Theta(F) = (A, b)$ and we call~$A$ the \emph{socle} of~$F$.
Note that if $A$ is the socle of~$F$, then $R_F$ belongs to~$k[A]
\otimes k[A]$; if moreover $F\in k[A] \otimes k[A]$, then we say that
$F$ is {\em supported} by $A$. (More on this in the next section).

\begin{lem}\label{lem-theta-well-defined} 
(a) Let $R$ be a universal $R$-matrix for~$k[G]$ such that 
$(g\otimes g) \, R  = R \, (g\otimes g)$ for all $g\in G$. 
If $A$ is the abelian subgroup of~$G$ such that $\kg\langle R\rangle = k[A]$, 
then $A$ is normal.

(b) The map~$\Theta$ is well-defined.
\end{lem}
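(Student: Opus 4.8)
The plan is to prove~(a) by a functoriality argument for Radford's minimal quasi-triangular Hopf subalgebra, and then to assemble~(b) by combining~(a) with the results already established in Sections~\ref{subsec_braided}--\ref{Ronfunctionalg}.

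For part~(a), I would first observe that the hypothesis $(g\otimes g)\,R=R\,(g\otimes g)$ is precisely the statement that $R$ is fixed by the automorphism $\ad(g)\otimes\ad(g)$ of $\kg\otimes\kg$, since $\ad(g)(x)=gxg^{-1}$. For each $h\in G$ the map $\ad(h)$ is a Hopf algebra automorphism of $\kg$. The key point is that Radford's construction is functorial: if $\phi$ is a Hopf automorphism of a quasi-triangular Hopf algebra with $R$-matrix $R=\sum_i s_i\otimes t_i$, then $(\phi\otimes\phi)(R)$ is again an $R$-matrix and $\phi\bigl(\kg\langle R\rangle\bigr)=\kg\langle(\phi\otimes\phi)(R)\rangle$, because $\kg\langle R\rangle$ is the smallest Hopf subalgebra containing the $s_i$ and $t_i$ and $\phi$ carries this family to the components of $(\phi\otimes\phi)(R)$. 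Applying this with $\phi=\ad(h)$ and using $(\ad(h)\otimes\ad(h))(R)=R$, I obtain $\ad(h)\bigl(k[A]\bigr)=k[A]$. Since $\ad(h)\bigl(k[A]\bigr)=k[hAh^{-1}]$, this forces $hAh^{-1}=A$ for every $h\in G$, so $A$ is normal.

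For part~(b), I would begin by checking that the data attached to an invariant twist $F$ are legitimate. By~\eqref{twistedR} the element $R_F=F_{21}F^{-1}$ is a universal $R$-matrix for $\kg$, and since $F$ is invariant, both $F$ and $F_{21}$ commute with every $g\otimes g$; hence so does $F^{-1}$, and therefore $R_F$ satisfies the hypothesis of part~(a). Radford's theorem together with the bicommutativity of $\kg\langle R_F\rangle$ noted in Section~\ref{subsec_braidings} yields a unique abelian subgroup $A$ with $\kg\langle R_F\rangle=k[A]$, which is normal by part~(a); the Fourier transform of Section~\ref{Ronfunctionalg} then identifies $R_F$ with $R(A,b)$ for a well-defined bilinear form $b$ on $\widehat{A}$.

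It remains to verify that $b$ lies in $\B(G)$ and that $\Theta$ depends only on the class of $F$. The form $b$ is alternating because Lemma~\ref{lem-u} gives $u_{R_F}=1$, which by the computation of Section~\ref{Ronfunctionalg} is equivalent to $b$ being alternating; it is non-degenerate because $A$ is the socle, so $R(A,b)$ is minimal, and Proposition~\ref{prop_minimal_is_degenerate} converts minimality into non-degeneracy. The main step, which I expect to be the most delicate, is the $G$-invariance of $b$. Here I would use that conjugation permutes the primitive idempotents of $k[A]$ according to $\ad(h)(e_\sigma)=e_{h\cdot\sigma}$, where $(h\cdot\sigma)(a)=\sigma(h^{-1}ah)$ is the natural $G$-action on $\widehat{A}$. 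Writing out $(\ad(h)\otimes\ad(h))(R_F)=R_F$ in the basis $e_\sigma\otimes e_\tau$ and comparing coefficients then gives $b(h\cdot\sigma,h\cdot\tau)=b(\sigma,\tau)$, that is, $G$-invariance. Finally, if $F$ and $F'$ represent the same class of $\H^2_{\ell}(G)$, then $R_{F'}=R_F$ by the discussion following~\eqref{gaugeR}, so they share the same socle and the same form; hence $\Theta(F)=\Theta(F')$ and $\Theta$ is well defined.
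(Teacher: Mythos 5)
Your proposal is correct and follows essentially the same route as the paper: part~(a) via the observation that $\ad(g)$ is a Hopf automorphism fixing $R$ and hence preserving the minimal quasi-triangular Hopf subalgebra $\kg\langle R\rangle=k[A]$, and part~(b) by combining the invariance of $R_F$ under the class of $F$, Lemma~\ref{lem-u} for the alternating property, and Proposition~\ref{prop_minimal_is_degenerate} for non-degeneracy. You merely supply more detail than the paper at two points it leaves implicit (that $R_F$ satisfies the hypothesis of~(a), and the idempotent computation behind the $G$-invariance of~$b$, which the paper dismisses as obvious), and both verifications are accurate.
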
 

\begin{proof}
(a) The conjugation by $g\in G$ induces an automorphism of the Hopf
algebra~$\kg$ that sends the $G$-invariant tensor~$R$ to itself.
Therefore it sends~$\kg\langle R \rangle = k[A]$ to itself, which
implies that $A$ is a normal subgroup.

(b) We have observed above that $R_F$, and thus~$(A, b)$, depends
only on the class of~$F$ in~$\H^2_{\ell}(G)$.  
It follows from Part~(a) that $A$ is normal. 
By Lemma~\ref{lem-u}, the Drinfeld element~$u_{R_F}$ is equal to~$1$, 
and therefore the bilinear form~$b$ is alternating. It is minimal, and so also non-degenerate
by Proposition~Ê\ref{prop_minimal_is_degenerate}. It is obviously $G$-invariant. 
\end{proof} 

We now state our main theorem.

\begin{thm}\label{thm_theta} 
Assume that the ground field~$k$ is algebraically closed and of
characteristic zero. For any finite group~$G$
the map $\Theta : \H^2_{\ell}(G) \to \B(G)$ enjoys the following properties:

(a) The fibres of $\Theta : \H^2_{\ell}(G) \to \B(G)$ are the left
cosets of~$\Int_k(G)/\Inn(G)$.

(b) The subset $\Theta^{-1}(1)$ is a subgroup of $\H^2_\ell(G)$ 
isomorphic to $\Int_k(G) / \Inn(G)$. 

(c) If the order of~$G$ is odd, then any element $(A,b)$ in $\B(G)$
is of the form $\Theta(F)$ for an $F$ which is supported by $A$. 
In particular, $\Theta$ is surjective in this case.
\end{thm}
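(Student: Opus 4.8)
The plan is to handle the three parts in turn, with (a) and (b) dropping out of the structural results already established and the real work concentrated in (c).

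For part (a) I would compute the fibres directly. By construction of $\Theta$, both the socle $A$ and the form $b$ are read off from the universal $R$-matrix, so $\Theta(F)=\Theta(F')$ holds precisely when $R_F=R_{F'}$. By Proposition~\ref{RFdeterminesF} this in turn is equivalent to $F^{-1}$ and $(F')^{-1}$ being gauge equivalent. The rest is group theory: equation~\eqref{abF} shows that on invariant twists the gauge action of $N(k[G])$ is left translation by the subgroup $\{a\cdot(1\otimes 1)\}$, which descends in $\H^2_{\ell}(G)$ to the subgroup $K:=\Int_k(G)/\Inn(G)$ of Corollary~\ref{coro_int_inn}\,(c). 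Thus ``$F^{-1}$ and $(F')^{-1}$ gauge equivalent'' reads $[(F')^{-1}]\in K\,[F^{-1}]$, and applying group inversion in $\H^2_{\ell}(G)$ (an anti-automorphism, since the group need not be abelian) turns this right-coset condition into the left-coset condition $[F']\in [F]\,K$. Hence the fibre of $\Theta$ through $[F]$ is exactly the left coset $[F]\,K$.

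Part (b) is then immediate. The trivial twist $1\otimes 1$ has $R_{1\otimes 1}=1\otimes 1$, whose socle is $\{1\}$, so $\Theta(1\otimes 1)=(\{1\},1)=1$. Since $[1\otimes 1]$ is the identity of $\H^2_{\ell}(G)$, part~(a) identifies $\Theta^{-1}(1)$ with the left coset $[1\otimes 1]\,K=K$, which is the asserted subgroup isomorphic to $\Int_k(G)/\Inn(G)$.

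For part (c) I would exhibit a preimage explicitly. Given $(A,b)\in\B(G)$ with $|A|$ odd, the oddness lets me take the canonical square-root cocycle of Section~\ref{subsec_explicit_abelian}, $c(\rho,\sigma)=b(\rho/2,\sigma)$, and set $F=\sum_{\rho,\sigma}c(\rho,\sigma)\,e_\rho\otimes e_\sigma\in k[A]\otimes k[A]$. This $F$ is a twist for $k[A]$ and is supported by $A$ by construction, and a direct computation gives $R_F=F_{21}F^{-1}=R(A,b)$ with $R(A,b)$ as in~\eqref{Rb}, so the associated form is $b$. As $b$ is non-degenerate it is minimal by Proposition~\ref{prop_minimal_is_degenerate}, so the minimal quasi-triangular Hopf subalgebra carrying $R(A,b)$ is exactly $k[A]$ (as in Section~\ref{subsec_braidings}); hence the socle of $F$ is $A$ and, by Lemma~\ref{lem-theta-well-defined}, $\Theta(F)=(A,b)$.

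The hard part will be checking that $F$ is a genuine \emph{invariant} twist on $k[G]$, i.e. that it is $G$-invariant, equivalently that the cocycle $c$ is $G$-invariant. This is exactly where oddness is indispensable. The ``division by two'' map $\rho\mapsto\rho/2$ is a power map, hence commutes with every automorphism of the abelian group $\widehat{A}$ and in particular with the conjugation action of $G$ on $\widehat{A}$; combined with the $G$-invariance of $b$ this gives $c(g\cdot\rho,g\cdot\sigma)=b\bigl((g\cdot\rho)/2,\,g\cdot\sigma\bigr)=b\bigl(g\cdot(\rho/2),\,g\cdot\sigma\bigr)=b(\rho/2,\sigma)=c(\rho,\sigma)$, and therefore $\Delta(g)\,F=F\,\Delta(g)$ for all $g\in G$. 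Once this invariance is secured, the computations of $R_F$ and of the socle above complete the proof that $\Theta$ is surjective with the required supported representatives.
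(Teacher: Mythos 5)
Your proposal is correct and follows essentially the same route as the paper: part (a) via Proposition~\ref{RFdeterminesF} and the translation formula~\eqref{abF}, part (b) as the fibre over the trivial element, and part (c) via the square-root cocycle $c(\rho,\sigma)=b(\rho/2,\sigma)$ of Section~\ref{subsec_explicit_abelian}. The only differences are presentational: you spell out why $c$ inherits $G$-invariance from $b$ (the paper asserts it in one line) and you make explicit that non-degeneracy of $b$ forces the socle to be all of $A$, both of which are welcome clarifications rather than deviations.
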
 

\begin{proof}
(a) Assume that $\Theta(F) = \Theta(F')= (A,b)$. 
It follows that $R_F = R_{F'} = R(A,b)$, 
and by Proposition~\ref{RFdeterminesF}, the twists
$F^{-1}$ and $F'^{-1}$ are gauge equivalent:
\[
F^{-1} = (a\otimes a) \, F'{}^{-1} \, \Delta(a^{-1}) = (a\otimes a) \,
\Delta(a^{-1}) \, F'{}^{-1} \, .
\]
Since $F^{-1}$ and $F'^{-1}$ are invariant, we deduce from 
Proposition~\ref{prop-Hgaugequiv}\,(a) that $a$ belongs to~$N_k(G) = N(k[G])$. 
From~\eqref{abF} we obtain $(a\cdot (1\otimes 1))^{-1} = a^{-1}\cdot
(1\otimes 1)$, which can be rewritten as
$[(a\otimes a) \, \Delta(a^{-1})]^{-1} = (a^{-1} \otimes a^{-1}) \, \Delta(a)$. 
As a result,
\[
F = F' \, (a^{-1} \otimes a^{-1}) \, \Delta(a) \, .
\]
Thus $F$ and $F'$ belong to the same left coset of $\Int_k(G)/\Inn(G)$
(see Proposition~\ref{prop-Hgaugequiv}\,(c) together with
the isomorphisms~\eqref{eq-NH-Int} and~\eqref{eq-NG-Int}). 
The converse is clear.

(b) This follows from Part\,(a). 
Note that $\Theta^{-1}(1)$ is the subgroup isomorphic to~$\Int_k(G)/\Inn(G)$
mentioned in Corollary~\ref{coro_int_inn}\,(c).

(c) Assume that $G$ has odd order, and let $(A, b) \in \B(G)$. 
As observed in Section~\ref{subsec_explicit_abelian}, the fact that
$\widehat{A}$ has odd order allows us to pick the map 
$c : (x,y)\mapsto b(x/2, y)$ as a two-cocycle whose associated bilinear
form is $b$. Since $b$ is $G$-invariant, so is $c$. Therefore the
twist $F\in k[A]\otimes k[A]$ determined by~$c$ according to the
recipe of Section~\ref{subsec_explicit_abelian} is also invariant. 
We have $R_F = R(A,b)$, which implies that $\Theta(F) = (A,b)$.
\end{proof} 

Since the sets $\Int_k(G)/\Inn(G)$ and $\B(G)$ are finite, we deduce 
the following corollary
(we shall give an alternating proof in Section~\ref{sec_rationality}).

\begin{coro}\label{H2fini}
The group $\H^2_\ell(G)$ is finite.
\end{coro}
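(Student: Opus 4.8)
The plan is to show that $\H^2_\ell(G)$ is finite by exhibiting it as a finite union of finite fibres of the map~$\Theta$. By Theorem~\ref{thm_theta}\,(a), every fibre of $\Theta : \H^2_\ell(G) \to \B(G)$ is a left coset of the subgroup $\Int_k(G)/\Inn(G)$, and in particular has cardinality exactly $|\Int_k(G)/\Inn(G)|$. Thus the cardinality of~$\H^2_\ell(G)$ is bounded by the product
\[
|\H^2_\ell(G)| \leq |\B(G)| \cdot |\Int_k(G)/\Inn(G)| \, ,
\]
and it suffices to argue that both factors on the right are finite.

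The first step is to observe that $\Int_k(G)/\Inn(G)$ is finite. Indeed, $\Int_k(G)$ is by definition a group of automorphisms of the finite group~$G$, so it is a subgroup of the finite group $\Aut(G)$; hence $\Int_k(G)$ itself is finite, and so is its quotient by~$\Inn(G)$. The second step is to check that the pointed set $\B(G)$ is finite. An element of~$\B(G)$ is a pair $(A,b)$ where $A$ is an abelian normal subgroup of~$G$; since $G$ is finite, there are only finitely many subgroups~$A$ to consider. For each such~$A$, the datum~$b$ is a non-degenerate alternating bilinear form $b : \widehat{A} \times \widehat{A} \to k^\times$, and such a form is determined by its finitely many values $b(\sigma,\tau)$ as $\sigma,\tau$ range over the finite group~$\widehat{A}$; moreover each value is a root of unity (indeed $b$ factors through $\widehat A \times \widehat A \to k^\times$ on a finite group, so its image lands in the roots of unity of~$k^\times$). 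Hence there are only finitely many admissible forms~$b$ for each~$A$, and $\B(G)$ is finite.

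Combining the two steps, $\H^2_\ell(G)$ is covered by the finitely many fibres $\Theta^{-1}(A,b)$ indexed by the finite set~$\B(G)$, each of which is finite of size $|\Int_k(G)/\Inn(G)|$. Therefore $\H^2_\ell(G)$ is finite. I do not expect any serious obstacle here: both finiteness assertions are elementary consequences of the finiteness of~$G$, and the combinatorial bound is immediate from Theorem~\ref{thm_theta}\,(a). The only point requiring a word of care is the finiteness of the set of forms~$b$, for which the key observation is that a bilinear form on a finite abelian group into~$k^\times$ necessarily takes values in a finite group of roots of unity, leaving only finitely many possibilities.
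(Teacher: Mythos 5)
Your proof is correct and follows exactly the paper's own argument: the paper deduces the corollary in one line from Theorem~\ref{thm_theta}\,(a) together with the (implicit) finiteness of $\B(G)$ and of $\Int_k(G)/\Inn(G)$, which you simply spell out in more detail. (The paper also sketches a second, independent proof in Section~\ref{sec_rationality} via $\pi_0(\Gamma)$, but your route matches the primary one.)
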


Theorem~\ref{thm_theta} has the following obvious consequences.

\begin{coro}\label{coro1}
If $\B(G) = \{ 1 \}$, then $\H^2_{\ell}(G) \cong \Int_k(G)/\Inn(G)$.
\end{coro}

\begin{coro}\label{coro2}
When $\Int_k(G) = \Inn(G)$, then $\Theta : \H^2_{\ell}(G) \to \B(G)$ is injective. 
If in addition $G$ has odd order, then $\Theta$ is a bijection of sets.
\end{coro}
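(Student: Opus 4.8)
Corollary \ref{coro2} follows directly from Theorem~\ref{thm_theta}, so the plan is to extract it as a formal consequence rather than to prove anything from scratch.

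First I would observe that the hypothesis $\Int_k(G) = \Inn(G)$ makes the quotient group $\Int_k(G)/\Inn(G)$ trivial. By Theorem~\ref{thm_theta}\,(a), the fibres of $\Theta$ are precisely the left cosets of this quotient group inside $\H^2_{\ell}(G)$. Since the quotient is trivial, each coset is a singleton, so $\Theta$ has fibres consisting of single points; that is, $\Theta$ is injective. This disposes of the first assertion.

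For the second assertion, I would add the hypothesis that $G$ has odd order. Theorem~\ref{thm_theta}\,(c) then asserts that $\Theta$ is surjective in this case (every pair $(A,b) \in \B(G)$ is of the form $\Theta(F)$ for a suitable invariant twist $F$ supported by $A$). Combining surjectivity from~(c) with the injectivity already established, I conclude that $\Theta$ is a bijection of sets.

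I do not expect any genuine obstacle here, since the corollary is purely a matter of unwinding the statement of the main theorem: injectivity is the triviality of the fibre structure, and bijectivity adds the odd-order surjectivity clause. The only point to be careful about is that $\Theta$ is a bijection merely \emph{of sets}, not of groups—the target $\B(G)$ carries no natural group structure, so one should claim nothing more than a set-theoretic bijection, exactly as the statement does.
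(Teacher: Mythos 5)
Your argument is correct and is exactly the intended one: the paper states Corollary~\ref{coro2} as an ``obvious consequence'' of Theorem~\ref{thm_theta}, and your unwinding of parts~(a) and~(c) --- trivial fibres give injectivity, the odd-order clause gives surjectivity --- is precisely that deduction. Your closing caveat that the bijection is only one of sets is also in line with the paper's remarks on $\B(G)$ carrying no natural group law.
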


\subsection{The set $\B(G)$ as a colimit}

If $A\subset B$ is an inclusion of abelian subgroups of~$G$, 
then the natural map
\[
H^2(\widehat{A}, k^\times)^G \to H^2(\widehat{B}, k^\times)^G
\]
is injective. 
Indeed, it suffices to check that 
$H^2(\widehat{A}, k^\times) \to H^2(\widehat{B}, k^\times)$ is injective.
Since $H^2(\widehat{A}, k^\times) \cong \Hom(\Lambda^2 \widehat{A}, k^\times)$
for all abelian groups~$A$,
this is equivalent to the surjectivity of
$\Lambda^2 \widehat{B} \to \Lambda^2 \widehat{A}$,
which follows from the surjectivity of the natural homomorphism 
$\widehat{B} \to \widehat{A}$ between the Pontryagin duals.

Now let $\mathcal{C}$ be the category whose objets are the
abelian normal subgroups of~$G$ and whose arrows are the inclusions. 
It follows that the colimit of $A \mapsto H^2(\widehat{A}, k^\times)^G$ over~$\mathcal{C}$ 
is essentially a union, which we denote by~$\bigcup H^2(\widehat{A}, k^\times)^G$.

As a matter of notation, given $b\in H^2(\widehat{A}, k^\times)^G$,
where $A$ is an abelian normal subgroup of~$G$, we shall write~$[A,b]$ 
for the corresponding element of $\bigcup H^2(\widehat{A}, k^\times)^G$. 
Of course several choices for $A$ and $b$ can lead to
the same element in the colimit.

\begin{lem}\label{lem-beta-r-matrix}
An element $\beta \in \bigcup H^2(\widehat{A}, k^\times)^G$ defines
a universal $R$-matrix $R(\beta )\in k[G]\otimes k[G]$ with Drinfeld
element $1$ by the formula $R(\beta) = R(A,b)$, where $A$ and $b$
are such that $\beta = [A,b]$.
\end{lem}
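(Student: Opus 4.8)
The plan is to verify three assertions packaged into the statement: that the formula $R(\beta) = R(A,b)$ does not depend on the representative $(A,b)$ of $\beta$, that the resulting two-tensor is a universal $R$-matrix for the whole of $k[G]$, and that its Drinfeld element is~$1$. Parts two and three will reduce almost immediately to results already in hand, so the real work is the well-definedness.

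For well-definedness I would invoke the universal property of the colimit. Each abelian normal subgroup $A$ gives a map $H^2(\widehat{A}, k^\times)^G \to k[G]\otimes k[G]$ sending $b$ to $R(A,b)$, and it suffices to check that these maps form a cocone over the category~$\mathcal{C}$, i.e. that they are compatible with the structure maps. For an inclusion $A \subset B$ of abelian normal subgroups, the structure map $H^2(\widehat{A}, k^\times)^G \to H^2(\widehat{B}, k^\times)^G$ sends a form $b$ on~$\widehat{A}$ to $b\circ(p\times p)$, where $p:\widehat{B}\to\widehat{A}$ is the restriction dual to the inclusion. Applying Lemma~\ref{lem-restriction} with its ``$A$'' taken to be our~$B$ and its ``$B$'' taken to be our~$A$, we get exactly $R(A,b) = R(B, b\circ(p\times p))$ in $k[G]\otimes k[G]$. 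Hence the maps $R(A,-)$ agree along the structure maps, they factor uniquely through the colimit, and $\beta \mapsto R(\beta)$ is well-defined.

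Next I would check the universal $R$-matrix axioms on a fixed representative $(A,b)$, with $A$ abelian normal and $b$ a $G$-invariant alternating form on~$\widehat{A}$. By the discussion of Section~\ref{Ronfunctionalg}, $R(A,b)$ is a universal $R$-matrix for $k[A]\cong\OO_k(\widehat{A})$, so equations~\eqref{R2} hold inside $k[A]^{\otimes 3}\subseteq k[G]^{\otimes 3}$; since the coproduct of $k[G]$ restricts to that of $k[A]$, they hold verbatim in $k[G]^{\otimes 3}$. The only remaining point is~\eqref{R1}. As $k[G]$ is cocommutative, \eqref{R1} amounts to $R(A,b)$ commuting with $\Delta(g)=g\otimes g$ for all $g\in G$, i.e. to the $G$-invariance of $R(A,b)$ under diagonal conjugation. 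Here I would use that conjugation by $g$ permutes the idempotents $e_\sigma$ ($\sigma\in\widehat{A}$) according to the conjugation action of~$G$ on~$\widehat{A}$, which makes sense precisely because $A$ is normal; the $G$-invariance of~$b$ then translates directly into the $G$-invariance of $R(A,b)=\sum_{\sigma,\tau} b(\sigma,\tau)\,e_\sigma\otimes e_\tau$. This gives~\eqref{R1} and shows $R(\beta)$ is a universal $R$-matrix for~$k[G]$.

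Finally, for the Drinfeld element, I note that the antipode of $k[G]$ restricts to that of $k[A]$ and $R(A,b)\in k[A]\otimes k[A]$, so $u_{R(A,b)}$ computed in $k[G]$ coincides with the one computed in $k[A]$; since $b$ is alternating, Lemma~\ref{lem-uR} yields $u_{R(\beta)}=1$ at once. The main obstacle is the well-definedness step, where one must carefully match the direction of the structure maps in the colimit with the restriction map $p$ of Lemma~\ref{lem-restriction}; once that bookkeeping is correct, the rest is a routine transfer of already-established facts.
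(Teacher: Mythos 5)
Your proof is correct and follows essentially the same route as the paper's: well-definedness is reduced via the colimit's universal property to the case of a single inclusion and settled by Lemma~\ref{lem-restriction} (with the roles of its $A$ and $B$ swapped, exactly as you note), and the Drinfeld element is handled by Lemma~\ref{lem-uR}. Your middle step, explicitly checking that $R(A,b)$ satisfies~\eqref{R1} and~\eqref{R2} in $k[G]$ using normality of $A$ and $G$-invariance of $b$, is a correct filling-in of a verification the paper leaves implicit from Sections~\ref{subsec_braidings} and~\ref{Ronfunctionalg}.
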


\begin{proof}
We need first to show that $R(A,b) = R(B,b')$ whenever $[A,b] = [B,b']$ in
the colimit. By the very construction of this colimit, it is enough to
check this when $A$, $B$, $b$ and $b'$ are as in Lemma
\ref{lem-restriction}. The same lemma yields the result,
and $R(\beta)$ is well-defined.
We use Lemma~\ref{lem-uR} to conclude that the Drinfeld element 
of~$R(\beta )$ is~$1$.
\end{proof}

\begin{prop}\label{lem-bg-is-colimit}
The map $\B(G) \to  \bigcup H^2(\widehat{A}, k^\times)^G$ that
sends~$(A,b)$ to~$[A,b]$ is a bijection of sets.
\end{prop}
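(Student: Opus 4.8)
The plan is to show that the map $(A,b) \mapsto [A,b]$ is both well-defined and bijective by exhibiting an explicit inverse, exploiting the fact that every element of the colimit has a canonical \emph{minimal} representative. First I would check that the map is well-defined: given $(A,b) \in \B(G)$, the form $b$ is a $G$-invariant alternating bilinear form on $\widehat A$, hence represents a class in $H^2(\widehat A, k^\times)^G$ (using the identification $H^2(\widehat A, k^\times) \cong \Hom(\Lambda^2\widehat A, k^\times)$ from Section~\ref{subsec_explicit_abelian}), and since $A$ is an abelian \emph{normal} subgroup of~$G$, the pair defines an element $[A,b]$ of the colimit. So the assignment makes sense.

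For \textbf{surjectivity}, I would take an arbitrary $\beta = [A,b] \in \bigcup H^2(\widehat{A}, k^\times)^G$ and associate to it its universal $R$-matrix $R(\beta) = R(A,b)$ via Lemma~\ref{lem-beta-r-matrix}, which also tells us the Drinfeld element is~$1$, so the underlying form is alternating. The key idea is to pass to the \emph{socle}: by Proposition~\ref{RFdeterminesF} and the discussion of minimal quasi-triangular Hopf subalgebras in Section~\ref{subsec_braidings}, the element $k[G]\langle R(\beta)\rangle = k[A_0]$ singles out a canonical abelian subgroup $A_0 \subseteq A$, which is normal by Lemma~\ref{lem-theta-well-defined}(a) since $R(\beta)$ is $G$-invariant. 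Restricting $b$ along $\widehat A \to \widehat{A_0}$ via Lemma~\ref{lem-restriction}, we obtain a form $b_0$ on $\widehat{A_0}$ with $R(A_0, b_0) = R(A,b)$; by construction $b_0$ is minimal, hence non-degenerate by Proposition~\ref{prop_minimal_is_degenerate}, and it is $G$-invariant. Thus $(A_0, b_0) \in \B(G)$ maps to $\beta$, proving surjectivity.

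For \textbf{injectivity}, suppose $(A,b)$ and $(A',b')$ are two elements of $\B(G)$ with $[A,b] = [A',b']$. By Lemma~\ref{lem-beta-r-matrix} the associated $R$-matrices agree: $R(A,b) = R(A',b')$. Now I would invoke the minimality built into the definition of $\B(G)$: since $b$ and $b'$ are non-degenerate, Proposition~\ref{prop_minimal_is_degenerate} tells us both are minimal, so $A$ and $A'$ are each the unique smallest subgroup with $R$-matrix lying in $k[A]\otimes k[A]$ respectively $k[A']\otimes k[A']$. Since $R(A,b) = R(A',b')$ lies in both $k[A]\otimes k[A]$ and $k[A']\otimes k[A']$, Lemma~\ref{lem-restriction} forces $R(A,b) \in k[A\cap A']\otimes k[A\cap A']$, and minimality then gives $A = A' = A\cap A'$. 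With the subgroups equal, the equality of the $R$-matrices $R(A,b) = R(A,b')$ immediately yields $b = b'$ by comparing coefficients in the expansion~\eqref{Rb}. Hence $(A,b) = (A',b')$.

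The step I expect to be the main obstacle is establishing injectivity of the subgroup $A$ rather than just of the form: one must argue cleanly that two minimal (non-degenerate) data giving the same $R$-matrix must have literally the same socle, not merely isomorphic ones. The cleanest route is the intersection argument above, reducing both forms to $k[A\cap A']$ via Lemma~\ref{lem-restriction} and then using minimality twice; the $f$-surjectivity/injectivity characterization sketched after Proposition~\ref{prop_minimal_is_degenerate} offers an alternative, identifying $A$ intrinsically as the image of the map $f:\OO_k(A)^{\cop}\to k[A]$ determined by the $R$-matrix, which depends only on $R(A,b)$ and hence coincides for both pairs.
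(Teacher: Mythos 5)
Your proof is correct and follows essentially the same route as the paper: both construct the inverse map by passing to the socle $k[G]\langle R(\beta)\rangle = k[A_0]$ and then using the equivalence of minimality and non-degeneracy (Proposition~\ref{prop_minimal_is_degenerate}) together with Lemma~\ref{lem-restriction}. The only cosmetic difference is that you establish injectivity via an intersection argument, whereas the paper simply notes that the round trip $(A,b)\mapsto[A,b]\mapsto(\text{socle data})$ is the identity because non-degeneracy of $b$ forces the socle to equal $A$; note also that the containment $R(A,b)\in k[A\cap A']\otimes k[A\cap A']$ is an elementary fact about intersections of tensor products of subspaces rather than an instance of Lemma~\ref{lem-restriction}.
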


\begin{proof}
We construct the inverse map. Given $\beta\in\bigcup
H^2(\widehat{A}, k^\times)^G$, let $R(\beta)$ be as in the previous
lemma, and let $A$ be defined by $\kg\langle R(\beta ) \rangle = k[A]$. 
It is clear that $R(\beta )$ is $G$-invariant; so we see from
Lemma~\ref{lem-theta-well-defined}\,(a) that $A$ is abelian and normal. 
The $R$-matrix $R(\beta)$ yields a bilinear form $b$ on
$\widehat{A}$. The form~$b$ is alternating since the Drinfeld element of~$R(\beta)$ 
is~$1$, it is non-degenerate by Proposition~\ref{prop_minimal_is_degenerate}, 
and it is $G$-invariant. Therefore, $(A,b) \in \B(G)$. 
We also observe from Lemma~\ref{lem-restriction} that $\beta = [A,b]$.
It is now clear that $\beta\mapsto (A,b)$ is the inverse of~$(A,b) \mapsto [A,b]$.
\end{proof}

In a sense the elements of $\B(G)$ provide canonical representatives
for the equivalence classes in the colimit. In the sequel we will
sometimes use the notation~$(A,b)$ in order to speak of the element~$[A,b]$ in the colimit; 
this notation carries the extra information that $b$ is non-degenerate
in accordance with Section~\ref{sub-support}.

The reader should beware that the notation $\bigcup H^2(\widehat{A}, k^\times)^G$ 
does not suggest any group law, for the colimit is taken in the category of sets.  
Nevertheless, we can at least construct a partially defined multiplication on it. 

Suppose given $(A,b)$ and $(B,b')$, and assume that $A$ and $B$ are
both contained in a larger abelian normal subgroup~$C$ of~$G$. 
Then $b$ and $b'$ can be viewed as elements of
$H^2(\widehat{C},k^\times)$ and composed in this group; their product
is still $G$-invariant and thus defines the element~$[C, bb']$. 
We put $[A,b]\cdot [B,b'] = [C, bb']$ whenever $C$ exists with the
aforementioned properties. Note that this definition does not depend
on the choice of~$C$: 
indeed if $C'$ is another choice, then $C\cap C'$ is yet another abelian normal subgroup 
of~$G$ containing both~$A$ and~$B$, and it has inclusion maps to~$C$ and~$C'$.

In terms of $\B(G)$ the above product can be, somewhat less conveniently,
described as~$(D, bb')$, where $D$ is determined by $\kg\langle R(C, bb') \rangle = k[D]$; 
here $D$ is a subgroup of~$C$ and we have written~$bb'$ for the unique element 
in $H^2(\widehat{D}, k^\times)$
that maps to the product of $b$ and $b'$ in $H^2(\widehat{C}, k^\times)$.

The following lemma gives a sufficient condition for the map~$\Theta$ 
to be compatible with this structure.

\begin{lem}\label{theta-product}
Let $F$ and $F'$ be invariant twists for $G$ and suppose that $F$
commutes with $R_{F'}$.  Under this assumption,

(a) we have $R_{FF'} = R_F \, R_{F'}$;

(b) if $\Theta(F)$ and $\Theta(F')$ can be composed in~$\B(G)$, 
then  $\Theta(F) \, \Theta(F') = \Theta(FF')$.
\end{lem}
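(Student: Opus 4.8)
The plan is to handle the two parts in turn, with part~(a) coming down to a short manipulation and part~(b) resting on it together with Lemma~\ref{lem-restriction}.

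For part~(a) I would compute $R_{FF'}$ straight from~\eqref{twistedR}. Since the flip $X\mapsto X_{21}$ is an algebra automorphism of $k[G]\otimes k[G]$, we have $(FF')_{21}=F_{21}F'_{21}$, so that
\[
R_{FF'}=(FF')_{21}\,(FF')^{-1}=F_{21}F'_{21}F'{}^{-1}F^{-1}=F_{21}\,R_{F'}\,F^{-1}\, .
\]
On the other hand $R_F\,R_{F'}=F_{21}F^{-1}\,R_{F'}$, so the two expressions coincide exactly when $R_{F'}F^{-1}=F^{-1}R_{F'}$, that is, when $F$ commutes with $R_{F'}$. This is precisely the hypothesis, and part~(a) follows.

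For part~(b), write $\Theta(F)=(A,b)$ and $\Theta(F')=(B,b')$, so that $R_F=R(A,b)$ and $R_{F'}=R(B,b')$. The assumption that $\Theta(F)$ and $\Theta(F')$ can be composed provides an abelian normal subgroup~$C$ of~$G$ containing both~$A$ and~$B$. Applying Lemma~\ref{lem-restriction} to the restriction maps $\widehat{C}\to\widehat{A}$ and $\widehat{C}\to\widehat{B}$ dual to the inclusions, I would rewrite $R_F=R(C,\tilde b)$ and $R_{F'}=R(C,\tilde b')$, where $\tilde b,\tilde b'$ are the images of $b,b'$ in $H^2(\widehat{C},k^\times)$. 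Both matrices now lie in $k[C]\otimes k[C]$ and are diagonal in the family of orthogonal idempotents $e_\sigma\otimes e_\tau$; from $(e_\sigma\otimes e_\tau)(e_{\sigma'}\otimes e_{\tau'})=\delta_{\sigma\sigma'}\delta_{\tau\tau'}\,e_\sigma\otimes e_\tau$ their product is again diagonal with entries multiplied pointwise, whence
\[
R(C,\tilde b)\,R(C,\tilde b')=R(C,\tilde b\,\tilde b')\, .
\]
Here $\tilde b\,\tilde b'$ is exactly the product of $b$ and $b'$ in $H^2(\widehat{C},k^\times)$.

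Combining this with part~(a) gives $R_{FF'}=R_F\,R_{F'}=R(C,\tilde b\,\tilde b')$. By the definition of~$\Theta$, the socle $D$ of $FF'$ is then determined by $\kg\langle R(C,\tilde b\,\tilde b')\rangle=k[D]$ and its form is the non-degenerate representative of $\tilde b\,\tilde b'$ on~$\widehat{D}$. But this is precisely the element $(D,bb')$ that the description of multiplication on~$\B(G)$ assigns to $\Theta(F)\,\Theta(F')$, so $\Theta(FF')=\Theta(F)\,\Theta(F')$. The algebra here is routine; the one point calling for care is checking that the injections $H^2(\widehat{A},k^\times),H^2(\widehat{B},k^\times)\hookrightarrow H^2(\widehat{C},k^\times)$ used to define the product in the colimit are the very maps furnished by Lemma~\ref{lem-restriction}, so that $\tilde b\,\tilde b'$ genuinely equals the form $bb'$ entering the definition of the product in~$\B(G)$. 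I expect this bookkeeping to be the main, if mild, obstacle.
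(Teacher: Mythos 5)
Your proposal is correct and follows essentially the same route as the paper: part~(a) is the identical computation $(FF')_{21}(FF')^{-1}=F_{21}R_{F'}F^{-1}$ with the commutation hypothesis supplying the last step, and part~(b) likewise passes to the common subgroup~$C$, uses Lemma~\ref{lem-restriction} to view all three $R$-matrices in $k[C]\otimes k[C]$, multiplies them pointwise on the idempotents $e_\sigma\otimes e_\tau$ to get $R(C,\tilde b)R(C,\tilde b')=R(C,\tilde b\tilde b')$, and concludes from $R_{FF'}=R(\gamma)$ that $\Theta(FF')=\gamma$. The ``bookkeeping'' point you flag at the end is exactly what Lemma~\ref{lem-restriction} and the construction of the colimit guarantee, so there is no gap.
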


\begin{proof}
(a) We simply compute: 
\begin{eqnarray*}
R_{FF'} & = & (FF')_{21} (FF')^{-1}
=  F_{21} F'_{21} (F')^{-1} F^{-1} \\
& = & F_{21} R_{F'} F^{-1}
= R_F R_{F'} \, . 
\end{eqnarray*}

(b) Let $\Theta(F) = (A,b) = \alpha $ and $\Theta(F') = (B, b') = \beta $. 
By assumption there exists an abelian normal subgroup~$C$ of~$G$ 
containing $A$ and~$B$. Let $\gamma = \alpha \beta = [C, bb']$. 
The $R$-matrices $R(\alpha)$, $R(\beta)$, and $R(\gamma)$ can all be
viewed as tensors in $k[C]\otimes k[C]$. Correspondingly, we view~$b$,
$b'$, and~$bb'$ as bilinear forms on~$\widehat{C}$. 
It is clear from~\eqref{Rb} that $R(C, b) \, R(C, b') = R(C, bb')$. 
However, $R(C, b) = R(\alpha)$ by definition, and similarly, 
$R(C, b')= R(\beta)$ and $R(C, bb') = R(\gamma )$. 

Note that for any invariant twist $J$ we always have $R( \Theta(J) )= R_J$ 
as a trivial consequence of the definitions.  This remark shows,
on the one hand, that we have $R_F = R(\alpha)$, $R_{F'} = R(\beta)$, and
$R_{FF'} = R( \Theta(FF') )$; on the other hand, combining\,(a) with
the equality $R(\alpha)\, R(\beta) = R(\gamma)$, which we have just
established, we deduce $R_{FF'} = R(\gamma)$.
The identity $R( \Theta(FF') ) = R(\gamma)$ clearly implies $\Theta(FF') = \gamma $.
\end{proof}

We can gain some information about the torsion in~$\H^2_\ell(G)$
from Lemma~\ref{theta-product}.

\begin{coro}\label{coro-torsion}
If $F$ commutes with~$R_F$, then the powers of~$\Theta(F)$ are well-defined.
Moreover, if $\Theta(F)$ has order~$n$, then the order of~$F$ in~$\H^2_\ell(G)$ 
is $nd$, where $d$ divides $\textnormal{card}( \Int_k(G) / \Inn(G) )$.
\end{coro}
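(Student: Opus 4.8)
The plan is to leverage Lemma~\ref{theta-product} and Corollary~\ref{coro-torsion}'s hypothesis to reduce the order computation of~$F$ to the order of~$\Theta(F)$ in the partially-defined multiplication, paying careful attention to the ambiguity introduced by the fibres of~$\Theta$. First I would observe that the hypothesis ``$F$ commutes with~$R_F$'' is exactly what Lemma~\ref{theta-product}\,(a) requires to conclude $R_{F^2} = R_F^2$; more generally, I would argue by induction that $F$ commutes with $R_{F^m}$ for every $m\geq 1$, so that $R_{F^m} = R_F^{\,m} = R(\Theta(F))^m$. This is the sense in which ``the powers of~$\Theta(F)$ are well-defined'': since $\Theta(F)$ lives in a single abelian normal subgroup~$A$ (its socle), all powers $\Theta(F)^m = [A, b^m]$ are computed inside the honest group $H^2(\widehat A, k^\times)^G$, so no choice of enveloping subgroup~$C$ is needed and the partial multiplication is genuinely defined on the cyclic subgroup generated by $\Theta(F)$.

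Next I would pin down the order of~$F$. By Lemma~\ref{theta-product}\,(b), applied repeatedly, $\Theta(F^m) = \Theta(F)^m$ whenever the relevant products exist, which we have just guaranteed. Suppose $\Theta(F)$ has order~$n$. Then $\Theta(F^n) = \Theta(F)^n = 1$, so $F^n$ lies in the fibre $\Theta^{-1}(1)$, which by Theorem~\ref{thm_theta}\,(b) is the subgroup of~$\H^2_\ell(G)$ isomorphic to $\Int_k(G)/\Inn(G)$. Conversely, if $F^m$ is trivial in~$\H^2_\ell(G)$, then $\Theta(F)^m = \Theta(F^m) = \Theta(1) = 1$, forcing $n \mid m$. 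Thus the order of~$F$ is a multiple of~$n$, say $nd$, and $F^n$ has order~$d$ inside the subgroup $\Theta^{-1}(1) \cong \Int_k(G)/\Inn(G)$. Consequently $d$ divides the order of that subgroup, i.e. $d \mid \operatorname{card}(\Int_k(G)/\Inn(G))$, which is the assertion.

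The step I expect to be the main obstacle is verifying that the inductive compatibility $\Theta(F^m) = \Theta(F)^m$ really goes through for all~$m$, rather than just~$m=2$. The subtlety is that Lemma~\ref{theta-product} presupposes both that $F$ commutes with $R_{F'}$ \emph{and} that $\Theta(F)$ and $\Theta(F')$ can be composed in~$\B(G)$. I would need to check that applying the lemma with $F' = F^{m-1}$ remains legitimate: that $F$ commutes with $R_{F^{m-1}} = R_F^{\,m-1}$ (which follows from $F$ commuting with $R_F$), and that $\Theta(F) = (A,b)$ and $\Theta(F^{m-1}) = (A, b^{m-1})$ share the same socle~$A$, so their composition in~$\B(G)$ is automatic with $C = A$. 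The socle stability is the delicate point, but it is forced by $R_{F^{m-1}} = R(A,b)^{m-1} \in k[A]\otimes k[A]$ together with the minimality built into the definition of the socle in Section~\ref{sub-support}: the minimal quasi-triangular Hopf subalgebra generated by a power of $R(A,b)$ cannot exceed $k[A]$, and non-degeneracy of~$b^{m-1}$ (when $m-1$ is prime to the order of~$\widehat A$, and in general by the divisibility of $k^\times$) keeps it equal to~$k[A]$. Once this is in place the arithmetic of the two paragraphs above closes the argument.
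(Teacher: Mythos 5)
Your proposal is correct and follows exactly the route the paper takes, namely an induction via Lemma~\ref{theta-product} to get $\Theta(F^k)=\Theta(F)^k$ followed by an appeal to Theorem~\ref{thm_theta}; you have merely written out the details that the paper compresses into one line. The only quibble is in your final paragraph: the socle of $F^{m-1}$ need not equal $A$ (e.g.\ $b^{m-1}$ may be degenerate), but this is harmless since it is always \emph{contained} in $A$, so the composition in $\B(G)$ is defined with $C=A$, which is all your argument actually uses.
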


\begin{proof}
The lemma and an immediate induction imply that 
$ \Theta(F^k) = \Theta(F)^k$
for any non-negative integer~$k$. Now apply Theorem~\ref{thm_theta}. 
\end{proof}

\begin{Rem}\label{rmk-odd-implies-torsion}
The condition that $F$ commute with~$R_F$ is trivially satisfied if
$F$ is supported by its socle (recall that this means $F\in k[A]\otimes k[A]$)
for $F$ and $R_F$ in this case both belong to
the commutative algebra $k[A]\otimes [A]$. 
Theorem~\ref{thm_theta} guarantees that any twist~$F$ is supported by its socle in the
particular case when $G$ has odd order and $\Int_k(G) = \Inn(G)$. 
The next corollary goes further with a very special class of examples.
\end{Rem}

\begin{coro} \label{coro-fixed-points}
Assume that $G$ is a group of odd order such that $\Int_k(G) = \Inn(G)$.
If $G$ has a unique maximal abelian normal subgroup~$A$, then
\[
\H^2_{\ell}(G) \cong H^2(\widehat{A}, k^\times)^G \, .
\]
\end{coro}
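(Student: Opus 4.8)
The plan is to assemble the corollary from three facts already established: the set-theoretic bijection~$\Theta$ of Corollary~\ref{coro2}, the identification of~$\B(G)$ with a colimit in Proposition~\ref{lem-bg-is-colimit}, and the partial multiplicativity of~$\Theta$ recorded in Lemma~\ref{theta-product}. Since $G$ has odd order and $\Int_k(G)=\Inn(G)$, Corollary~\ref{coro2} tells us that $\Theta:\H^2_\ell(G)\to\B(G)$ is a bijection of sets, and Proposition~\ref{lem-bg-is-colimit} identifies $\B(G)$ with $\bigcup H^2(\widehat{A},k^\times)^G$. It therefore suffices to check two things: first, that the uniqueness of the maximal abelian normal subgroup~$A$ collapses this colimit onto the single group $H^2(\widehat{A},k^\times)^G$; and second, that the resulting bijection $\H^2_\ell(G)\to H^2(\widehat{A},k^\times)^G$ is a group homomorphism.

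First I would settle the collapse of the colimit. The abelian normal subgroups of~$G$, ordered by inclusion, form a \emph{finite} poset, so every such subgroup is contained in a maximal one. If the maximal abelian normal subgroup is unique, it is therefore a maximum: every abelian normal subgroup~$B$ of~$G$ satisfies $B\subseteq A$. Consequently $A$ is a terminal object of the category~$\mathcal{C}$ indexing the colimit, and the injectivity of the transition maps $H^2(\widehat{B},k^\times)^G\to H^2(\widehat{A},k^\times)^G$ (established just before Proposition~\ref{lem-bg-is-colimit}) shows that $\bigcup H^2(\widehat{A},k^\times)^G = H^2(\widehat{A},k^\times)^G$. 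Under this identification the partially defined product on the colimit becomes the honest group law of $H^2(\widehat{A},k^\times)^G$, since any two elements now admit the common overgroup $C=A$.

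It remains to prove multiplicativity, which I regard as the main point. Given two classes in~$\H^2_\ell(G)$, I would use Remark~\ref{rmk-odd-implies-torsion}---valid precisely because $G$ has odd order and $\Int_k(G)=\Inn(G)$---to choose invariant representatives $F$ and~$F'$ each supported by its socle, so that $F,F'\in k[A]\otimes k[A]$. Since $A$ is abelian, the algebra $k[A]\otimes k[A]=k[A\times A]$ is commutative, whence $F$ commutes with~$R_{F'}$; moreover the socles of~$F$ and~$F'$ both lie inside the common abelian normal subgroup~$A$. Thus both hypotheses of Lemma~\ref{theta-product} are met and $\Theta(FF')=\Theta(F)\,\Theta(F')$. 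Because the group law of~$\H^2_\ell(G)$ is $[F][F']=[FF']$, this is exactly the statement that $\Theta$ is multiplicative, and combined with the previous paragraph it exhibits the desired isomorphism $\H^2_\ell(G)\cong H^2(\widehat{A},k^\times)^G$.

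The only genuine obstacle is verifying that the product computed by Lemma~\ref{theta-product} on~$\B(G)$ matches the group operation of~$H^2(\widehat{A},k^\times)^G$; this is where I would be most careful, tracking a class~$[A_F,b]$ through its inflation to~$\widehat{A}$ and confirming that the evident identity $R(A,b)\,R(A,b')=R(A,bb')$ coming from~\eqref{Rb} translates into multiplication of cohomology classes on~$\widehat{A}$. Everything else is either the formal poset argument or a direct appeal to the cited results.
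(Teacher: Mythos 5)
Your proof is correct and follows essentially the same route as the paper: the bijection comes from Theorem~\ref{thm_theta} (via Corollary~\ref{coro2}) together with Proposition~\ref{lem-bg-is-colimit}, and multiplicativity is obtained by choosing representatives supported by their socles, which then lie in the commutative algebra $k[A]\otimes k[A]$ so that Lemma~\ref{theta-product} applies. The only difference is that you spell out more explicitly the collapse of the colimit onto $H^2(\widehat{A},k^\times)^G$ and the matching of the partial product with the group law, which the paper leaves implicit.
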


\begin{proof}
It follows from Theorem \ref{thm_theta} and Proposition
\ref{lem-bg-is-colimit} that $\H^2_\ell(G)$ is via the map~$\Theta $
in bijection with $H^2(\widehat{A}, k^\times)^G$ as a set.
Let us check that $\Theta $ is a homomorphism in this case.

Now, the same theorem shows that any element of $\H^2_\ell(G)$
can be represented by a twist~$F$ that is supported by its socle~$A_F$. 
Suppose that $F$ and~$F'$ are chosen in this way. 
By hypothesis, $A_F \subset A$ and $A_{F'} \subset A$, and so $F$ and $F'$ 
are both in~$k[A]\otimes k[A]$. Clearly then $F$ commutes with $R_{F'}$ since
$A$ is abelian. Lemma~\ref{theta-product} then implies that 
$\Theta(F) \, \Theta(F') = \Theta(FF')$, as desired.
\end{proof}

\section{Some consequences of semi-simplicity}\label{sufficient}

The aim of this short section is to obtain a sufficient
condition on a finite group~$G$ for the group $\H^2_{\ell}(G)$ to be abelian
and to prepare the ground for the next section.
We keep assuming that~$k$ is algebraically closed.

In this situation, $k[G]$ is semi-simple and
there is an algebra isomorphism
\begin{equation}\label{decomp}
k[G] \cong \prod_{\rho \in \widehat G}\, \End_k(V_{\rho})\, , 
\end{equation}
where $\rho$ runs over the set~$\widehat G$ of irreducible representations of~$G$.
Consequently,
\begin{equation*}
k[G]^{\times} \cong \prod_{\rho \in \widehat G}\, \Aut_k(V_{\rho})\, , 
\end{equation*}

Since the isomorphism~\eqref{decomp} sends any element $a \in k[G]$ to the 
left multiplication by~$a$ on each representation space~$V_{\rho}$, it sends
the conjugation action of~$G$ on~$k[G]$ to the action of~$G$
on~$\End_k(V_{\rho})$ given by $(gf) : v\mapsto g f(g^{-1}v)$
for all $g \in G$, $f \in \End_k(V_{\rho})$.
It follows from this and from Schur's lemma
that the subalgebra $k[G]^G$ of $G$-invariants elements
is given by
\begin{equation}\label{decomp2}
k[G]^G \cong \prod_{\rho \in \widehat G}\, \End_G(V_{\rho}) \cong \OO_k({\widehat G}) \, .
\end{equation}

From the above we also deduce the $G$-invariant algebra isomorphisms
\[
k[G] \otimes k[G] \cong
\prod_{\rho, \sigma \in \widehat G}\, 
\bigl(\End_k(V_{\rho}) \otimes \End_k(V_{\sigma})\bigr)
\cong
\prod_{\rho, \sigma \in \widehat G}\, \End_k(V_{\rho} \otimes V_{\sigma}) \, .
\]
Decomposing the tensor products $V_{\rho} \otimes V_{\sigma}$
into simple modules
\[V_{\rho} \otimes V_{\sigma} \cong 
\bigoplus_{\tau \in \widehat G}\, V_{\tau}^{d_{\rho, \sigma}^{\tau}} \, ,
\]
we obtain the $G$-invariant algebra isomorphism
\begin{equation}\label{decomp21}
k[G] \otimes k[G] \cong
\prod_{\rho, \sigma \in \widehat G}\,
\End_k\Bigl(
\bigoplus_{\tau \in \widehat G}\, V_{\tau}^{d_{\rho, \sigma}^{\tau}} \Bigr) \, .
\end{equation}

We claim that the subalgebra $(k[G] \otimes k[G])^G$ of $G$-invariant
elements of $k[G] \otimes k[G]$ can be decomposed 
into a product of matrix algebras as follows:
\begin{equation}\label{decomp3}
(k[G] \otimes k[G])^G \cong 
\prod_{\rho, \sigma \in \widehat G}\,
\prod_{\tau \in \widehat G}\, M_{d_{\rho, \sigma}^{\tau}}(k)\, . 
\end{equation}
Indeed, taking $G$-invariants of both sides of~\eqref{decomp21}
and using Schur's lemma twice, we obtain
\begin{eqnarray*}
(k[G] \otimes k[G])^G 
& \cong &\prod_{\rho, \sigma \in \widehat G}\,
\End_G \Bigl( \bigoplus_{\tau \in \widehat G}\, 
V_{\tau}^{d_{\rho, \sigma}^{\tau}} \Bigr) \\
& \cong &\prod_{\rho, \sigma \in \widehat G}\,
\prod_{\tau \in \widehat G}\, 
\End_G(V_{\tau}^{d_{\rho, \sigma}^{\tau}}) \\
& \cong & \prod_{\rho, \sigma \in \widehat G}\,
\prod_{\tau \in \widehat G}\, 
M_{d_{\rho, \sigma}^{\tau}}(\End_G(V_{\tau})) \\
& \cong & \prod_{\rho, \sigma \in \widehat G}\,
\prod_{\tau \in \widehat G}\, 
M_{d_{\rho, \sigma}^{\tau}}(k)\, . 
\end{eqnarray*}
We thus see that the algebra $(k[G] \otimes k[G])^G$ is semisimple;
moreover, it is \emph{commutative} if and only if all multiplicities~$d_{\rho, \sigma}^{\tau}$
are equal to~$1$ or $0$. In this case we shall say that $G$ has \emph{no
multiplicities}. 

Observe that an abelian group has no multiplicities. 
Note also that a quotient of a group without multiplicities has no multiplicities. 

The following result provides a sufficient condition for $\H^2_{\ell}(G)$ to be abelian.

\begin{prop}\label{nomultiplicities}
If $G$ has no multiplicities, then $A^2(k[G])$ and $\H^2_{\ell}(G)$ are abelian groups.
\end{prop}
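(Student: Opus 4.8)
The key identity is~\eqref{decomp3}, which exhibits $(k[G]\otimes k[G])^G$ as a product of matrix algebras $\prod_{\rho,\sigma,\tau} M_{d^\tau_{\rho,\sigma}}(k)$. The hypothesis that $G$ has no multiplicities means every $d^\tau_{\rho,\sigma}$ is~$0$ or~$1$, so each factor is either the zero algebra or a copy of~$M_1(k)=k$. Hence $(k[G]\otimes k[G])^G$ is a product of copies of~$k$, i.e.\ a commutative algebra. Since $A^2(k[G])$ is by definition the group of invertible elements of this algebra (as recorded in Section~\ref{AG}), and the group of units of a commutative ring is abelian, I would conclude immediately that $A^2(k[G])$ is abelian.

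First I would state that $(k[G]\otimes k[G])^G$ is commutative under the no-multiplicities hypothesis, citing~\eqref{decomp3} and the observation already made in the text that commutativity of this algebra is equivalent to all multiplicities being~$0$ or~$1$. Then I would invoke the description of $A^2(k[G])$ as the unit group of this commutative algebra to get that $A^2(k[G])$ is abelian. For the second assertion, I would recall from Section~\ref{twist-coho} that $\H^2_\ell(G)=\H^2_{\tw}(k[G])=Z^2(k[G])/\Im(\delta^1)$, where $Z^2(k[G])$ is a subgroup of $A^2(k[G])$ and $\Im(\delta^1)$ is a central (hence normal) subgroup. A subgroup of an abelian group is abelian, and a quotient of an abelian group is abelian, so $\H^2_\ell(G)$ is abelian as well.

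\textbf{There is essentially no obstacle here;} the content of the proposition is entirely front-loaded into the semisimplicity computation~\eqref{decomp}--\eqref{decomp3}, which is assumed to be in place. The only point requiring any care is to make sure one is quoting the correct characterization of~$A^2(k[G])$ as the $G$-invariant invertible elements of~$k[G]\otimes k[G]$, and the correct presentation of~$\H^2_\ell(G)$ as a subquotient of~$A^2(k[G])$; both are stated verbatim earlier in the excerpt. The proof is therefore a two-line deduction: commutative algebra $\Rightarrow$ abelian unit group $\Rightarrow$ abelian subquotient.
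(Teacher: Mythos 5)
Your proposal is correct and follows essentially the same route as the paper: both read off from~\eqref{decomp3} that under the no-multiplicities hypothesis $A^2(k[G])=((k[G]\otimes k[G])^G)^\times$ is a product of copies of $GL_1(k)=k^\times$ (equivalently, the unit group of a commutative algebra), hence abelian, and then conclude for $\H^2_\ell(G)$ by observing it is a subquotient of $A^2(k[G])$. No gaps.
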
 

\pf
Since $A^2(k[G]) = ((k[G] \otimes k[G])^G)^{\times}$, it follows from~\eqref{decomp3}
that
\begin{equation*}
A^2(k[G]) \cong \prod_{\rho, \sigma \in \widehat G}\,
\prod_{\tau \in \widehat G}\, 
GL_{d_{\rho, \sigma}^{\tau}}(k) \, .
\end{equation*}
Therefore, if $d_{\rho, \sigma}^{\tau} \leq 1$ for all $\rho, \sigma, \tau \in \widehat G$,
the group $A^2(k[G])$ is a $k$-split torus, which is abelian.
Since~$\H^2_{\ell}(G)$ is a subquotient of~$A^2(k[G])$, it is abelian as well.
\epf

\begin{Rem}\label{rmk-torsion-applies}
A consequence of Proposition~\ref{nomultiplicities} is that, when $G$ has no
multiplicities, any invariant twist $F$ commutes with~$R_F$: indeed
$F$ and $R_F$ both belong to~$A^2(k[G])$. 
In this case we may appeal to Corollary~\ref{coro-torsion} in order to study
the torsion in~$\H^2_\ell(G)$.
\end{Rem}

\section{Rationality questions}\label{sec_rationality}

In this section 
we no longer assume that the ground field~$k$ is algebraically closed.  
Let~$\bar k$ be the algebraic closure of~$k$.
We fix a finite group $G$.

\subsection{The point of view of algebraic groups}

For any $k$-algebra $K$, and for $i= 1, 2, 3$, we define $A^i(K)$ to
be the group of $G$-invariant invertible elements in~$K[G]^{\otimes i}$, 
where $G$ acts diagonally by conjugation.  We can see $A^i$ as
a group scheme over $k$.  By Section~\ref{sufficient} this group
scheme is smooth since $A^i(K)$ is a product of general linear groups
$GL_n(K)$ whenever $K$ contains $\bar k$.  When $K = k$, we have
$A^i(k) = A^i(k[G])$ in the notation of Section~\ref{twist-coho}.

We define $\delta^1: A^1 \to A^2$ and $\delta^2_L, \delta^2_R: A^2 \to
A^3$ by the same formulas as in Section~\ref{twist-coho}; these are
morphisms of group schemes.  
The equalizer~$\Gamma$ of $\delta^2_L$ and $\delta^2_R$ is a smooth group scheme.
Moreover,
\[
	\Gamma(k) = Z^2(k[G]) 
\]
by Section~\ref{twist-coho}. Using Notation~2.1, we obtain
\begin{equation}\label{Gamma1}
\Gamma(k) / \delta^1( A^1(k) ) = \H^2_{\ell}(G/k) \, .
\end{equation}

\subsection{Tangent Lie algebras} 

Let $\Gamma^0$ be the identity component of $\Gamma$. We have the
following.

\begin{prop}\label{prop_lie_surj} 
The image of $\delta^1 : A^1(\bar k) \to A^2(\bar k)$ is
$\Gamma^0(\bar k)$. 
\end{prop}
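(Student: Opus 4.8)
The plan is to identify the image of $\delta^1$ with the identity component $\Gamma^0(\bar k)$ by combining a surjectivity-onto-the-component argument with a dimension count via tangent Lie algebras. First I would observe that $\delta^1 : A^1 \to A^2$ is a morphism of smooth affine algebraic groups over $\bar k$, so by Chevalley's theorem its image is a closed subgroup of $\Gamma(\bar k)$; since $A^1(\bar k)$ is connected (it is the group of units of the commutative semisimple algebra $Z(k[G])\otimes \bar k$, hence a product of $\GL_1$'s, i.e.\ a torus, which is connected), the image $\delta^1(A^1(\bar k))$ is a \emph{connected} closed subgroup of $\Gamma(\bar k)$ and therefore lands inside $\Gamma^0(\bar k)$. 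This already gives the inclusion $\Im(\delta^1) \subseteq \Gamma^0(\bar k)$.

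For the reverse inclusion I would argue at the level of tangent Lie algebras. Both $\Gamma$ and $A^1$ are smooth, so $\operatorname{Lie}(\Gamma^0) = \operatorname{Lie}(\Gamma)$ and the differential $d\delta^1 : \operatorname{Lie}(A^1) \to \operatorname{Lie}(\Gamma)$ has image equal to $\operatorname{Lie}(\delta^1(A^1))$. Since $\delta^1(A^1(\bar k))$ is a \emph{closed connected} subgroup of the \emph{connected} group $\Gamma^0(\bar k)$, to prove equality it suffices to show these two groups have the same dimension, equivalently that $d\delta^1$ is surjective onto $\operatorname{Lie}(\Gamma)$. The Lie algebra $\operatorname{Lie}(A^1)$ is the centre $Z(\bar k[G])$, $\operatorname{Lie}(A^2)$ is $(\bar k[G]\otimes \bar k[G])^G$, and $\operatorname{Lie}(\Gamma)$ is the kernel of $d\delta^2_L - d\delta^2_R$ inside $\operatorname{Lie}(A^2)$. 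The map $d\delta^1$ sends $z \in Z(\bar k[G])$ to $z\otimes 1 + 1 \otimes z - \Delta(z)$, the Hochschild-type coboundary.

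The heart of the matter is thus the linear-algebra computation that the image of $z \mapsto z\otimes 1 + 1\otimes z - \Delta(z)$ equals the space of all $G$-invariant $\xi \in \bar k[G]\otimes \bar k[G]$ killed by the infinitesimal twist (cocycle) condition $d\delta^2_L(\xi) = d\delta^2_R(\xi)$. I expect this to be the main obstacle, and I would handle it using the Wedderburn decomposition from Section~\ref{sufficient}: under $\bar k[G] \cong \prod_\rho \End(V_\rho)$ the coproduct, the conjugation action, and the maps $d\delta^1, d\delta^2_L, d\delta^2_R$ all become explicit in terms of the representation theory of $G$. Concretely, the infinitesimal cocycle condition should force $\xi$ to be a symmetric $2$-cocycle for the coalgebra structure, and a direct computation (essentially the triviality of the relevant Sweedler/Hopf $2$-cohomology of the cocommutative $\bar k[G]$ in this infinitesimal, additive setting) identifies these cocycles with coboundaries $d\delta^1(z)$. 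This is exactly the infinitesimal shadow of the fact that lazy $2$-cohomology of a group algebra is controlled by $H^2(G,k^\times)$, and the additive/Lie version of that vanishing gives surjectivity of $d\delta^1$, completing the proof.
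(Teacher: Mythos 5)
Your overall strategy coincides with the paper's: reduce the statement to the surjectivity of the tangent map $\Lie \delta^1$ onto $\Lie\Gamma$, using that $A^1$ is a connected (split) torus over $\bar k$, that its image under $\delta^1$ is a closed connected subgroup of $\Gamma^0$, and that equality of Lie algebras of closed connected subgroups in characteristic zero forces equality of groups. All of that is correct and is exactly the ``classical translation'' the paper invokes. The explicit formulas you give for $\Lie\delta^1$, $\Lie\delta^2_L$, $\Lie\delta^2_R$ also match the paper's.

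The gap is that the decisive step --- showing that every $G$-invariant $\xi$ with $\Lie\delta^2_L(\xi)=\Lie\delta^2_R(\xi)$ lies in the image of $\Lie\delta^1$ --- is announced but not carried out. You yourself flag it as ``the main obstacle'' and then offer two unexecuted plans (a Wedderburn-decomposition computation, and an appeal to an ``additive Sweedler cohomology'' vanishing), without verifying either; the phrase ``the infinitesimal cocycle condition should force $\xi$ to be a symmetric $2$-cocycle'' is not justified and is not in fact needed. The paper closes this gap with a short, concrete argument that you should compare with: it first drops the $G$-invariance and proves exactness of the complex $0 \to \bar k[G] \to \bar k[G]^{\otimes 2} \to \bar k[G]^{\otimes 3}$ by a dimension count ($\Lie\delta^2_R - \Lie\delta^2_L$ is injective on the span of the $g\otimes h$ with $g\neq h$, which has dimension $n^2-n$, so the kernel has dimension at most $n = \dim \Im(\Lie\delta^1)$), and then takes $G$-fixed points, which is exact in characteristic zero. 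Your second suggestion can in fact be made rigorous and is arguably conceptually cleaner: dualizing the tangent complex identifies it with the bar complex computing $H^*(G,k)$ for the \emph{additive} trivial module $k$, and $H^2(G,k)=0$ for a finite group in characteristic zero; but as written your proposal asserts rather than proves this identification, so the main content of the proposition remains unestablished.
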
 

\begin{proof}
Since we work in characteristic $0$, we can use the machinery of Lie algebras.
For a group scheme $S$, we write $\Lie S$ for the Lie algebra tangent to $S$. 
For instance, $\Lie A^i(K)= (K[G]^{\otimes i})^G$ for $i=1,2,3$.

The tangent maps $\Lie \delta^1$, $\Lie \delta^2_L$, $\Lie \delta^2_R$ are
given by the following formulas: 
\begin{eqnarray*}
\Lie \delta^1(Y) & = & Y \otimes 1 + 1\otimes Y - \Delta(Y) \, , \\
\Lie \delta^2_L (X) & = & X \otimes 1 + (\Delta \otimes \id)(X) \, , \\
\Lie \delta^2_R (X) & = & 1 \otimes X  + (\id \otimes \Delta)(X) \, ,
\end{eqnarray*}
where $Y\in \bar k[G]^G$ and $X \in (\bar k[G] \otimes \bar k[G])^G$.
The Lie algebra $\Lie \Gamma(\bar k)$ is the subspace
of those tangent vectors $X\in \Lie A^2(\bar k)$ such that 
$\Lie \delta^2_L(X) = \Lie \delta^2_R(X)$.  
Moreover, the proposition will be proved once we show that 
such an $X$ must be of the form $\Lie \delta^1(Y)$ for some $Y$ tangent to~$A^1$. 
This basic translation from algebraic groups to Lie algebras, over an algebraically closed
field of characteristic $0$, is classical and follows for example from
the material in \cite[Chap.~II, \S\S~6--7]{borel}.

As it happens, the formulas above for the tangent maps make sense for
any $Y\in \bar k[G]$ and $X \in \bar k[G] \otimes \bar k[G]$, not
necessarily $G$-invariant.  So if we prove
that the following is an exact sequence
\begin{equation}\label{complex}
\begin{CD}
0 @>{}>> \bar k[G] @>{\Lie \delta^1}>> \bar k[G]^{\otimes 2} @>{\Lie \delta^2_R - \Lie \delta^2_L}>>  
\bar k[G]^{\otimes 3} \, , 
\end{CD} 
\end{equation}
then the proposition will follow by taking $G$-fixed points.
It is easy to check that the composite map $(\Lie \delta^2_R - \Lie \delta^2_L) \circ \Lie \delta^1 = 0$,
so that \eqref{complex} is a complex of vector spaces.
One also verifies that the map $\Lie \delta^1 : \bar k[G] \to \bar k[G]^{\otimes 2}$ is injective.
So in order to prove the exactness of~\eqref{complex}, 
it suffices to check it at~$\bar k[G]^{\otimes 2}$.
To achieve this, we count dimensions. 
The subspace~$V$ of $\bar k[G]^{\otimes 2}$ spanned by the elements $g \otimes h$ 
with $g \neq h$ has dimension $n^2 -n$, where $n$ is the order of~$G$. 
A simple computation shows that the map $\Lie \delta^2_R - \Lie \delta^2_L$ 
is injective on~$V$. Therefore, its rank is at least~$n^2 -n$.
We deduce that the kernel of~$\Lie \delta^2_R - \Lie \delta^2_L$ 
has dimension at most $n^2 - (n^2 -n) = n$,
which is the dimension of the image of~$\Lie \delta^1$. This shows that
the kernel of~$\Lie \delta^2_R - \Lie \delta^2_L$ is equal to the image of~$\Lie \delta^1$.
\end{proof} 

As a consequence, we obtain another proof of Corollary~\ref{H2fini}.

\begin{coro}
The group $\H^2_{\ell}(G/\bar k)$ is finite.
\end{coro}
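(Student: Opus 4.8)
The plan is to identify $\H^2_{\ell}(G/\bar k)$ with the component group of the algebraic group~$\Gamma$ and then invoke the general fact that an algebraic group has only finitely many connected components. Concretely, Equation~\eqref{Gamma1} (applied over~$\bar k$) gives
\[
\H^2_{\ell}(G/\bar k) = \Gamma(\bar k) / \delta^1\bigl(A^1(\bar k)\bigr) \, ,
\]
so everything hinges on understanding the subgroup $\delta^1(A^1(\bar k))$ of~$\Gamma(\bar k)$. This is exactly where Proposition~\ref{prop_lie_surj} enters.

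First I would recall that, by Proposition~\ref{prop_lie_surj}, the image of $\delta^1 : A^1(\bar k) \to A^2(\bar k)$ is precisely the identity component~$\Gamma^0(\bar k)$. Substituting this into the displayed formula immediately yields the isomorphism
\[
\H^2_{\ell}(G/\bar k) \cong \Gamma(\bar k) / \Gamma^0(\bar k) \, ,
\]
that is, $\H^2_{\ell}(G/\bar k)$ is nothing but the group of connected components of~$\Gamma$.

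Next I would observe that $\Gamma$, being the equalizer of the two morphisms $\delta^2_L, \delta^2_R$ between the affine group schemes $A^2$ and~$A^3$ of finite type, is itself an affine algebraic group of finite type over~$\bar k$; indeed it was already noted in Section~\ref{sec_rationality} that $\Gamma$ is smooth. A smooth algebraic group over a field has finitely many irreducible (equivalently, connected) components, the identity component~$\Gamma^0$ being a normal subgroup of finite index. Hence the quotient $\Gamma(\bar k)/\Gamma^0(\bar k)$ is a finite group, and the result follows.

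I do not anticipate a serious obstacle here: once Proposition~\ref{prop_lie_surj} is available, the argument is essentially formal, the only content being the standard finiteness of the component group of an algebraic group of finite type. The one point to state cleanly is that the quotient appearing in~\eqref{Gamma1} really is the \emph{full} component group rather than some proper subquotient, which is guaranteed precisely because $\delta^1(A^1(\bar k))$ equals all of~$\Gamma^0(\bar k)$ and not merely a dense or open subgroup.
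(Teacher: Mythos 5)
Your proposal is correct and coincides with the paper's own proof: both identify $\H^2_{\ell}(G/\bar k)$ with $\Gamma(\bar k)/\Gamma^0(\bar k) = \pi_0(\Gamma)$ via Proposition~\ref{prop_lie_surj} and then invoke the finiteness of the component group of an algebraic group of finite type. Your additional remark about why the quotient is the full component group is a sound elaboration of the same argument.
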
 

\begin{proof}
By Proposition~\ref{prop_lie_surj},
\begin{equation}\label{Gamma2}
\H^2_{\ell}(G/ \bar k) = \Gamma(\bar k) / \delta^1( A^1(\bar k) ) 
= \Gamma(\bar k) / \Gamma^0(\bar k)
= \pi_0(\Gamma) \, ,
\end{equation}
which is finite.
\end{proof} 

\subsection{The rationality exact sequence}

We are now able to compare $\H^2_{\ell}(G/k)$ and $\H^2_{\ell}(G/\bar k)$.

\begin{thm}\label{comparison}
Assume that all $\bar k$-representations of~$G$ can be realized over~$k$.
Then there is an exact sequence 
\[
1 \longrightarrow H^1(k, Z(G)) \longrightarrow \H^2_{\ell}(G/k) \longrightarrow 
\H^2_{\ell}(G/\bar k) \longrightarrow 1 \, ,
\]
where $H^1(k, Z(G))$ is the first Galois cohomology group of~$k$ with coefficients in the centre of~$G$. 
\end{thm}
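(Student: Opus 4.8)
The plan is to exploit the algebraic-group description of $\H^2_{\ell}(G)$ established in the previous subsections. We have the smooth $k$-group scheme $\Gamma$ with $\Gamma(k) = Z^2(k[G])$, together with $\delta^1 : A^1 \to \Gamma$, and Proposition~\ref{prop_lie_surj} shows that the image of $\delta^1 : A^1(\bar k) \to A^2(\bar k)$ is exactly the identity component $\Gamma^0(\bar k)$. Thus over $\bar k$ we have a short exact sequence of algebraic groups
\[
1 \longrightarrow K \longrightarrow A^1 \stackrel{\delta^1}{\longrightarrow} \Gamma^0 \longrightarrow 1 \, ,
\]
where $K = \Ker(\delta^1)$, and $\H^2_{\ell}(G/\bar k) = \pi_0(\Gamma)$ by~\eqref{Gamma2}. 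The whole point is that passing to $k$-points is not exact on the right, and the failure is measured by Galois cohomology; this is precisely where the sequence in the theorem comes from. First I would identify the kernel $K$: an element $a \in A^1$ lies in $\Ker(\delta^1)$ iff $\Delta(a) = a \otimes a$, i.e.\ iff $a$ is a central group-like element, so by Proposition~\ref{H1lG} we get $K = Z(G)$ as a finite (constant, hence \'etale) $k$-group scheme.

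**The long exact cohomology sequence.**
Next I would apply flat (or \'etale) cohomology, or simply Galois cohomology of the $k$-points, to the short exact sequence of smooth $k$-group schemes
\[
1 \longrightarrow Z(G) \longrightarrow A^1 \stackrel{\delta^1}{\longrightarrow} \Gamma^0 \longrightarrow 1 \, ,
\]
yielding the exact sequence of pointed sets
\[
A^1(k) \stackrel{\delta^1}{\longrightarrow} \Gamma^0(k) \longrightarrow H^1(k, Z(G)) \longrightarrow H^1(k, A^1) \, .
\]
The hypothesis that every $\bar k$-representation of $G$ is defined over $k$ enters crucially here: by~\eqref{decomp} it forces $A^1 \cong \prod_\rho \GL(V_\rho)$ to be a product of general linear groups \emph{already over} $k$ (a product of split reductive groups), so by Hilbert~90 one has $H^1(k, A^1) = 1$. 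This makes the connecting map $\Gamma^0(k) \to H^1(k, Z(G))$ surjective, giving
\[
\Gamma^0(k) / \delta^1(A^1(k)) \cong H^1(k, Z(G)) \, .
\]

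**Assembling the main sequence.**
To finish, I would compare the two cosets spaces. On one hand, $\H^2_{\ell}(G/k) = \Gamma(k)/\delta^1(A^1(k))$ by~\eqref{Gamma1}; on the other, $\H^2_{\ell}(G/\bar k) = \pi_0(\Gamma)$. The natural quotient map $\Gamma(k) \to \pi_0(\Gamma)(k) = \pi_0(\Gamma)$ (here again using that $\Gamma$ and its component group are defined and have all their points over $k$, which follows from the representation hypothesis) has image and its kernel fits into
\[
1 \longrightarrow \Gamma^0(k)/\delta^1(A^1(k)) \longrightarrow \Gamma(k)/\delta^1(A^1(k)) \longrightarrow \pi_0(\Gamma) \longrightarrow 1 \, .
\]
Substituting the three identifications $\Gamma^0(k)/\delta^1(A^1(k)) \cong H^1(k,Z(G))$, $\Gamma(k)/\delta^1(A^1(k)) = \H^2_{\ell}(G/k)$, and $\pi_0(\Gamma) = \H^2_{\ell}(G/\bar k)$ produces exactly the asserted sequence. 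The main obstacle I anticipate is the right-exactness, i.e.\ the surjectivity of $\H^2_{\ell}(G/k) \to \H^2_{\ell}(G/\bar k)$: one must check that every connected component of $\Gamma$ has a $k$-rational point (equivalently that $\Gamma(k) \to \pi_0(\Gamma)$ is onto). This is not automatic for a general smooth group scheme, and it is here that the representation-realizability hypothesis must be used a second time, to guarantee that $\Gamma$ itself, and not merely $A^1$, is sufficiently split over $k$ so that each component carries a rational point; verifying this carefully is the heart of the argument.
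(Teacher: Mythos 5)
Your overall framework is the same as the paper's (the group scheme $\Gamma$, its identity component, two long exact sequences in Galois cohomology, Hilbert~90), and your treatment of the left-hand term is correct: the kernel of $\delta^1$ on $A^1$ is indeed the constant group scheme $Z(G)$, and $H^1(k,A^1)=1$ gives $\Gamma^0(k)/\delta^1(A^1(k))\cong H^1(k,Z(G))$. (One small slip: $A^1$ is the group of \emph{central} units of $K[G]$, so under the hypothesis it is the split torus $\prod_{\rho}\GL_1$ coming from~\eqref{decomp2}, not $\prod_\rho \GL(V_\rho)$; Hilbert~90 applies either way, but the correct identification matters below.)

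The genuine gap is the surjectivity of $\H^2_{\ell}(G/k)\to\H^2_{\ell}(G/\bar k)$, which you explicitly flag as ``the heart of the argument'' and then leave unproved, offering only the vague hope that the representation hypothesis makes $\Gamma$ ``sufficiently split.'' This step does not require any new splitness input: taking $\Gal(\bar k/k)$-fixed points in $1\to\Gamma^0(\bar k)\to\Gamma(\bar k)\to\pi_0(\Gamma)\to 1$ shows that the obstruction to lifting a component to a $k$-point is a class in $H^1(k,\Gamma^0)$, and this group vanishes by Hilbert~90 because $\Gamma^0$ is itself a $k$-split torus --- indeed, by Proposition~\ref{prop_lie_surj} the identity component $\Gamma^0$ is exactly the image $\delta^1(A^1)$ of the split torus $A^1$, hence a quotient of a split torus. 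So the same tool you already invoke for the left-hand term (Hilbert~90, applied now to $\Gamma^0$ rather than to $A^1$) closes the argument; without this observation your proof establishes only that $\H^2_{\ell}(G/k)/H^1(k,Z(G))$ \emph{injects} into $\H^2_{\ell}(G/\bar k)$, not the full exact sequence. Note also that your misidentification of $A^1$ would have obscured this route, since it is precisely the fact that $A^1$ is a torus that forces $\Gamma^0$ to be one.
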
 

\begin{proof}
Taking $\Gal(\bar k/k)$-fixed points in the exact sequence 
\[
1 \longrightarrow \Gamma^0(\bar k) \longrightarrow \Gamma(\bar k) 
\longrightarrow \pi_0(\Gamma) \to 1
\] 
yields the exact sequence
\[
1 \longrightarrow \Gamma^0(k) \longrightarrow \Gamma(k) 
\longrightarrow \pi_0(\Gamma) \to H^1(k, \Gamma^0) \, .
\]
Dividing out the first two groups by the central subgroup~$\delta^1(
A^1(k) )$, and identifying terms using~\eqref{Gamma1}
and~\eqref{Gamma2}, we obtain the exact sequence
\begin{equation} \label{eq-rat-ex-seq1}
1 \longrightarrow \Gamma^0(k) / \delta^1( A^1(k) ) \longrightarrow
\H^2_\ell(G/k)  
\longrightarrow \H^2_\ell(G/\bar k) \longrightarrow H^1(k, \Gamma^0)  \to 1 \, .
\end{equation}

The hypothesis on $k$ implies in particular that $A^1$ is a $k$-split
torus.  By Proposition~\ref{prop_lie_surj}, $\Gamma ^0$ is also a
$k$-split torus, so that $H^1(k, \Gamma^0) = 1$ by Hilbert's
Theorem~90.  

Now take $\Gal(\bar k/k)$-fixed points again, this time in the exact
sequence
\[ 
1 \longrightarrow Z(G) \longrightarrow A^1(\bar k) \overset{\delta^1}{\longrightarrow} 
\Gamma^0(\bar k) \longrightarrow 1 \, .
\]
We obtain the exact sequence
\begin{equation} \label{eq-rat-ex-seq2}
1 \longrightarrow Z(G) \longrightarrow A^1(k) \overset{\delta^1}{\longrightarrow} 
\Gamma^0(k) \longrightarrow  H^1(k, Z(G)) \longrightarrow H^1(k, A^1) \, .
\end{equation}
By Hilbert's Theorem~90 again, $H^1(k, A^1) = 1$, so that
\[
\Gamma^0(k) / \delta^1( A^1(k)) \cong H^1(k, Z(G)) \, .
\]
This shows that the exact sequence of the theorem is~\eqref{eq-rat-ex-seq1}.
\end{proof} 

\begin{Rem}
From the sequences~\eqref{eq-rat-ex-seq1} and~\eqref{eq-rat-ex-seq2}
we deduce without any assumptions on~$k$
that the natural map $\H^2_\ell(G/k) \to \H^2_\ell(G/\bar k)$ is injective 
whenever the centre of~$G$ is trivial. 
If moreover $k$ happens to satisfy the hypothesis of Theorem~\ref{comparison}, 
then this map is surjective, too. 
\end{Rem}

\begin{Rem}
It follows from the proof of Theorem~\ref{comparison} that
any element of $H^1(k, Z(G))$ can be represented by a map
$\gamma_a : \Gal(\bar k/k) \to Z(G)$ of the form $\gamma_a(\sigma) = {}^{\sigma} a \, a^{-1}$
for some $a \in A^1(\bar k)$. 
The image of~$\gamma_a$ in~$\H^2_{\ell}(G/k)$ can be represented by the invariant twist
$\delta^1(a) = (a \otimes a) \, \Delta(a^{-1})$.
\end{Rem}


\section{Examples and computations}\label{examples}

In this section we again assume that the ground field is algebraically closed
and we compute $\H^2_\ell(G)$ for some finite groups,
as an application of Theorem~\ref{thm_theta} and its corollaries.

\subsection{Examples of groups with trivial $\H^2_{\ell}(G)$}\label{BG}

\begin{thm}\label{simple} Let $k$ be an algebraically closed field.
Then $H^2_{\ell}(G) = 1$ if $G$ belongs to the following list of
finite groups:

(i) the simple groups,

(ii) the symmetric groups~$S_n$,

(iii) the groups $\SL_n(\F_q)$,

(iv) the groups $\GL_n(\F_q)$ when $n$ is coprime to $q-1$.
\end{thm}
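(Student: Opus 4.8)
The plan is to apply Corollary~\ref{coro1}, which reduces the statement $\H^2_{\ell}(G) = 1$ to the conjunction of two facts for each group~$G$ in the list: first, that $\B(G) = \{1\}$, and second, that $\Int_k(G) = \Inn(G)$. I would therefore organize the proof around verifying these two conditions case by case. For the second condition, I recall from Section~\ref{Out} that $\Int_k(G) \subset \Aut_{\rm c}(G)$, the group of class-preserving automorphisms, so it suffices to show $\Aut_{\rm c}(G) = \Inn(G)$; this is a well-studied group-theoretic property for which the required results are available in the literature for each family. For the first condition, $\B(G) = \{1\}$ means there are no non-trivial pairs $(A,b)$ with $A$ an abelian normal subgroup of~$G$ carrying a $G$-invariant non-degenerate alternating bilinear form on~$\widehat{A}$; by Proposition~\ref{nondeg_square} such an $A$ must be of symmetric type $A' \times A'$, so in particular $A$ cannot be trivial and must be reasonably large.

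For the simple groups~(i), both conditions are nearly immediate: a non-trivial simple group has no non-trivial abelian normal subgroup, so the only candidate for~$A$ is the trivial group, forcing $\B(G) = \{1\}$; and the equality $\Aut_{\rm c}(G) = \Inn(G)$ for finite simple groups is known. For the symmetric groups~(ii), the only abelian normal subgroups are trivial (for $n \neq 4$) or contained in a small exceptional case, so $\B(G)$ is again trivial after checking that no abelian normal subgroup admits a non-degenerate alternating form invariant under the whole group; and the class-preserving automorphisms of~$S_n$ are inner, a classical fact. For the linear groups~(iii) and~(iv), I would identify the abelian normal subgroups---these are controlled by the centre, which for $\SL_n(\F_q)$ is cyclic and for $\GL_n(\F_q)$ under the coprimality hypothesis is handled similarly---and observe that a cyclic group admits no non-degenerate alternating form (it is not of symmetric type unless trivial), so $\B(G) = \{1\}$; the statement $\Int_k(G) = \Inn(G)$ then follows from known results on class-preserving automorphisms of these groups.

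The main obstacle I expect is the verification that $\Int_k(G) = \Inn(G)$, equivalently $\Aut_{\rm c}(G) = \Inn(G)$, for the linear groups in~(iii) and~(iv). Whereas the triviality of $\B(G)$ reduces cleanly to examining abelian normal subgroups and invoking Proposition~\ref{nondeg_square}, the equality of class-preserving and inner automorphisms is a more delicate structural fact that must be drawn from the representation theory and conjugacy-class structure of these groups; the coprimality hypothesis $\gcd(n, q-1) = 1$ in~(iv) is precisely what is needed to control the centre of $\GL_n(\F_q)$ and thereby reduce the $\GL$ case to the $\SL$ case. I would therefore spend the bulk of the argument citing and assembling the appropriate results on class-preserving automorphisms, while treating the triviality of $\B(G)$ as the routine half of each case.
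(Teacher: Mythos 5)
Your proposal follows essentially the same route as the paper: reduce via Corollary~\ref{coro1} to checking $\B(G)=\{1\}$ (using Proposition~\ref{nondeg_square} to rule out non-trivial abelian normal subgroups of symmetric type, the centre/roots-of-unity argument handling the linear groups) and $\Int_k(G)=\Inn(G)$ (via $\Aut_{\rm c}(G)=\Inn(G)$, citing Feit--Seitz for simple groups). The only difference is one of emphasis: where you propose to import the class-preserving-automorphism statements for $\SL_n(\F_q)$ and $\GL_n(\F_q)$ from the literature, the paper actually derives them by hand, reducing $\SL_n$ to the simple quotient $\PSL_n$ (plus a homomorphism-to-the-centre argument) and $\GL_n$ to $\SL_n$ via the coprimality hypothesis, exactly as you anticipated would be the delicate part.
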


The simple groups include

(a) all \emph{alternating groups}~$A_n$, except $A_4$ (the latter
contains Klein's \emph{Vierergruppe} as a normal subgroup and will be treated
in Section~\ref{subsec-twists-of-A4}),

(b) all \emph{projective linear groups} $\PSL_n(\F_q)$, except
$\PSL_2(\F_2)$ and~$\PSL_2(\F_3)$; we have $\PSL_2(\F_2) \cong S_3$
and $\PSL_2(\F_3) \cong A_4$, which are not simple.

\pf 
By Corollary~\ref{coro1} it suffices to check that $\B(G)$ is trivial
and $\Int_k(G) = \Inn(G)$ for each of these groups.

Let us start with~$\B(G)$. Recall that it consists of
all pairs~$(A, b)$, where $A$ is an abelian normal subgroup of~$G$ and
$b$ is a $G$-invariant nondegenerate alternating bilinear form on~$\widehat{A}$.
By Proposition~\ref{nondeg_square},
any abelian group having a non-degenerate alternating bilinear form 
is necessarily of symmetric type. 
Any group~$G$ contains an abelian normal subgroup of symmetric type, namely the trivial group.  
If it does not contain any other abelian normal subgroup of symmetric type, then the set~$\B(G)$ is trivial.

So to prove that $\B(G)$ is trivial for the groups listed in the theorem,
it is enough to check that they do not contain any
non-trivial abelian normal subgroups of symmetric type.
This holds for

(i) the finite simple groups for obvious reasons;

(ii) the symmetric groups because the only proper normal subgroups
they contain are the alternating groups;

(iii) the groups $\GL_n(\F_q)$ and $\SL_n(\F_q)$ because their only abelian
normal subgroups are groups of roots of unity, which are
cyclic, hence not of symmetric type.

Let us now check that $\Int_k(G) = \Inn(G)$ for the above groups.
Since~$k$ is algebraically closed, it is enough by~\eqref{IntAut} to check that
$\Aut_{\rm c}(G) = \Inn(G)$.

Now, for any simple group~$G$, Feit and Seitz~\cite{FS} proved that
$\Aut_{\rm c}(G) = \Inn(G)$ using the classification of finite simple groups.

For the symmetric groups~$S_n$, the argument goes as follows.  Each
class-preserving automorphism necessarily maps any transposition to a
transposition.  Now it is well known (and a simple exercise) that any
automorphism of~$S_n$ that preserves transpositions is necessarily
inner. For $n \neq 6$ one can also use the fact that all automorphisms
of~$S_n$ are inner.

Let us now deal with~$\SL_n(\F_q)$. If $(n,q) = (2,2)$, then
$\SL_n(\F_q) = \PSL_n(\F_q) = S_3$ and we conclude as above. 
Now assume that $(n,q) \neq (2,2)$.  A class-preserving automorphism
$\varphi$ of~$\SL_n(\F_q)$ necessarily fixes each element of the
centre~$Z$ of the group, hence induces a class-preserving automorphism
of~$\PSL_n(\F_q) = \SL_n(\F_q)/Z$.  
If $(n,q) \neq (2,3)$, then the latter is simple and 
it follows from the above-mentioned result by Feit and Seitz that the
automorphism induced by~$\varphi$ on~$\PSL_n(\F_q)$ is inner.  
If $(n,q) = (2,3)$, then $\PSL_n(\F_q) \cong A_4$, whose automorphisms are all inner.
We are thus reduced to the case when $\varphi$ induces the identity
on~$\PSL_n(\F_q)$.  It follows that there is a group homomorphism $z :
\SL_n(\F_q) \to Z$ such that for all $g\in \SL_n(\F_q)$ we have
$\varphi(g) = z(g) g$.
Since $\varphi$ fixes the elements of~$Z$, we have $z(g) = 1$ for all
$g\in Z$.  Therefore, $z$ factors through~$\PSL_n(\F_q)$. The range
of~$z$ being abelian, it must factor through the abelianization
of~$\PSL_n(\F_q)$, which is trivial since $\PSL_n(\F_q)$ is simple and
not abelian.  Hence, $z(g) = 1$ for all $g\in \SL_n(\F_q)$. It follows
that $\varphi$ is the identity.

Since $n$ is coprime to the order $q-1$ of the cyclic
group~$\F_q^{\times}$, it follows that $\F_q^{\times}$ is uniquely
$n$-divisible and that any element $g\in \GL_n(\F_q)$ can be written
(uniquely) as $g = \lambda u$, where $\lambda$ is the unique
$n$-th root of the determinant of~$g$ and $u\in \SL_n(\F_q)$.  Let
$\varphi$ be a class-preserving automorphism of~$\GL_n(\F_q)$.  We
necessarily have $\det(\varphi(g)) = \det(g)$.  Therefore $\varphi$
preserves the subgroup~$\SL_n(\F_q)$. 
From the decomposition $g = \lambda u$, 
we easily deduce that the restriction of~$\varphi$
to~$\SL_n(\F_q)$ is class-preserving. It follows from the above
discussion that this restriction is an inner automorphism. We can
therefore assume that the restriction of~$\varphi$ to~$\SL_n(\F_q)$ is
the identity. Since it is also the identity on the central matrices,
it is the identity on~$\GL_n(\F_q)$.  We have thus proved that any
class-preserving automorphism of~$\GL_n(\F_q)$ is the conjugation by
some element of~$\SL_n(\F_q)$.  \epf

\subsection{The wreath product $\Z/p \wr \Z /p$ for $p$ odd}

It provides a first simple example of a group with non-trivial~$\H^2_{\ell}(G)$.

\begin{prop} The group $\H^2_{\ell}(\Z/p \wr \Z/p)$ is an elementary
abelian $p$-group of rank $(p-1)/2$.
\end{prop}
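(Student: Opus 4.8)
The plan is to obtain $\H^2_\ell(G)$ from Corollary~\ref{coro-fixed-points}, which applies once we verify three facts about $G = \Z/p \wr \Z/p$: that its order is odd, that it has a unique maximal abelian normal subgroup, and that $\Int_k(G) = \Inn(G)$. The first is immediate, since $|G| = p^{p+1}$ with $p$ odd. Throughout I write $G = V \rtimes C$ with base group $V = (\Z/p)^p$ and $C = \langle s\rangle \cong \Z/p$ acting by cyclic permutation of the coordinates; identifying $V$ with the regular representation $\F_p[C] \cong \F_p[t]/(t-1)^p$, the generator $s$ acts as multiplication by $t$.

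First I would pin down the maximal abelian normal subgroups. Clearly $V$ is abelian and normal, and I claim it is the only maximal one. Suppose an abelian normal subgroup $A$ contains an element $(v, s^j)$ with $s^j \neq 1$. Normality forces $A$ to contain all conjugates, and a direct computation shows that $A$ then contains $((1-s^j)w, 0)$ for every $w \in V$; since $s^j \neq 1$, the factor $1-t^j$ is a unit times $t-1$ in the local ring $\F_p[t]/(t-1)^p$, so $(1-s^j)V = (t-1)V$, of $\F_p$-dimension $p-1$. Requiring $(v, s^j)$ to commute with this subgroup forces $s^j$ to act trivially on $(t-1)V$, i.e.\ $(t-1)^2 V = 0$, which fails for $p > 2$. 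Hence every abelian normal subgroup lies in $V$, so $V$ is the unique maximal one; in particular $V$ is characteristic.

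The heart of the argument is to prove $\Int_k(G) = \Inn(G)$, and this is the step where the real work lies. As $k$ is algebraically closed, \eqref{IntAut} reduces this to $\Aut_{\rm c}(G) = \Inn(G)$. Let $\phi$ be a class-preserving automorphism. The conjugacy class of $s$ is exactly $\{(u, s) : u \in (1-s)V\}$, and since $(1-s) : V \to (1-s)V$ is onto, I may compose $\phi$ with an inner automorphism to arrange $\phi(s) = s$. Because $V$ is characteristic, $\phi(V) = V$, and $\phi(s) = s$ makes $\phi|_V$ a $C$-equivariant automorphism of $V \cong \F_p[C]$; hence $\phi|_V$ is multiplication by a unit $\alpha \in (\F_p[C])^{\times}$. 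Now the conjugacy class of the cyclic generator $e_0 \in V$ (the element $1 \in \F_p[C]$) is its $C$-orbit $\{t^j : 0 \le j < p\}$, so class-preservation gives $\phi(e_0) = \alpha = t^j$ for some $j$. Thus $\phi|_V$ is conjugation by $s^j$; since $\phi$ also fixes $s$, it agrees with conjugation by $s^j$ on the generating set $V \cup \{s\}$, whence $\phi$ is inner. Therefore $\Aut_{\rm c}(G) = \Inn(G)$.

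With these facts in hand, Corollary~\ref{coro-fixed-points} gives $\H^2_\ell(G) \cong H^2(\widehat{V}, k^{\times})^G$ as groups. Since $V$ is abelian it acts trivially on itself by conjugation, so $G$ acts through $C = G/V$, and $\widehat{V} \cong \F_p[C]$ as a $C$-module (the permutation representation being self-dual). By Section~\ref{subsec_explicit_abelian}, $H^2(\widehat{V}, k^{\times}) \cong \Hom(\Lambda^2 \widehat{V}, k^{\times})$ is the group of $k^{\times}$-valued alternating bilinear forms on $\widehat{V}$, with values in $\mu_p \cong \F_p$; this is an $\F_p$-vector space carrying a $C$-action, and it remains to compute the dimension of its fixed subspace. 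Writing such a form in the permutation basis $e_0, \dots, e_{p-1}$, the $C$-invariance of the matrix $(b_{ij})$ means $b_{ij} = f(j-i)$ for a single function $f : \Z/p \to \F_p$, while the alternating condition (here equivalent to skew-symmetry, as $p$ is odd) means $f$ is odd. The odd functions on $\Z/p$ form a space of dimension $(p-1)/2$, one free value per pair $\{d,-d\}$ with $d \neq 0$. Hence $\H^2_\ell(G)$ is elementary abelian of rank $(p-1)/2$, as claimed.
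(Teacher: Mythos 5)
Your proof is correct, and while it follows the same overall skeleton as the paper -- verify odd order, uniqueness of the maximal abelian normal subgroup, and $\Int_k(G)=\Inn(G)$, then invoke Corollary~\ref{coro-fixed-points} -- both of the substantive steps are carried out by genuinely different arguments. First, you actually prove that every abelian normal subgroup lies in $V$ (via the local-ring observation that $1-t^j$ is a unit times $t-1$), whereas the paper merely asserts that the abelian normal subgroups are the submodules of~$V$; this is a welcome addition. Second, for $\Aut_{\rm c}(G)=\Inn(G)$ the paper analyses the commutant of a Jordan block and the conjugates of each basis vector $\varepsilon_i$, while you exploit the fact that $V=\F_p[C]$ is free of rank one, so that a $C$-equivariant automorphism is multiplication by a unit $\alpha$, pinned down by class-preservation applied to the single generator $e_0$ whose class is its $C$-orbit; your version is cleaner and uses less. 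Third, and most notably, the paper computes $H^2(\widehat{V},\C^\times)^G$ by passing to $H^3(\widehat{V},\Z)$, the mod-$p$ cohomology ring $\F_p[e_1,\dots,e_p]\otimes\Lambda(u_1,\dots,u_p)$, and the kernel of the Bockstein on the invariants of $V\otimes V\oplus\Lambda^3V$, whereas you use the identification $H^2(\widehat{V},k^\times)\cong\Hom(\Lambda^2\widehat{V},k^\times)$ already established in Section~\ref{subsec_explicit_abelian} and count $C$-invariant alternating forms directly as odd functions $f:\Z/p\to\F_p$ via $b_{ij}=f(j-i)$. Your route is more elementary, stays entirely within the toolkit the paper has already set up, and makes the rank $(p-1)/2$ combinatorially transparent; the paper's Bockstein computation is heavier machinery for the same count.
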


\pf For simplicity, we work over the field $\C$ of complex numbers.
If $G$ is the wreath product $\Z/p \wr \Z/p$, then we can write $G$ as a semi-direct product
$G = V \rtimes C$, where $C$ is a cyclic group of order~$p$, generated
by an element $\sigma$, and $V = \F_p[C]$, on which $C$ acts by left
multiplication.  

The abelian normal subgroups of $G$ are exactly the submodules of~$V$.
It will follow from Corollary \ref{coro-fixed-points} that
\[ \H^2_{\ell}(G) = H^2(\widehat{V}, k^\times)^G \, ,
\] if we can only prove that $\Aut_{\rm c}(G) = \Inn(G)$.

So let $\varphi$ be a class-preserving automorphism of~$G$; we wish to
prove that $\varphi$ is inner.  By composing $\varphi$ with a suitable
inner automorphism, we may assume that $\varphi(\sigma) = \sigma$, and
with this assumption we see that $\varphi$ induces an automorphism of
the $\F_p[C]$-{module}~$V$.  Now, $\sigma$ has only one eigenvector
$\varepsilon_1$ (up to a scalar), with eigenvalue~$1$. Therefore we
can find a basis $(\varepsilon_1, \ldots, \varepsilon_p)$ of~$V$ such
that the matrix of~$\sigma$ in this basis is given by a single Jordan
block.  Moreover, the commutant of a Jordan block is the algebra of
matrices of the form
\[ \left(\begin{array}{ccccc} a & b & c & \cdots & ~ \\ 0 & a & b & c
& \vdots \\ \vdots & \vdots & a & b & \ddots \\ 0 & ~& ~& \ddots &
\ddots
\end{array}\right)
\] 
The conjugates of $\varepsilon_i$ are of the form $\varepsilon_i + b
\varepsilon_{i-1}$ for $b\in\F_p$. As a result, if a linear map of~$V$
given by a matrix as above is to send each $\varepsilon_i$ to one
of its conjugates, then it must act like conjugation by $\sigma^b$
(that is, all the constants in the matrix are $0$ except for $a$ and
$b$, and $a=1$). This proves that $\varphi$ is inner.

Let us now compute $H^2(\widehat{V}, \C^\times)^G$. First we use the
exponential exact sequence
\[
\begin{CD} 0 \longrightarrow \Z \longrightarrow \C
\overset{\exp}{\longrightarrow} \C^\times \longrightarrow 0 \, ,
\end{CD}
\]
which yields an isomorphism $H^2(\widehat{V}, \C^\times) \cong
H^3(\widehat{V},\Z)$ (and likewise for any finite group).  Since $V$
is elementary abelian, it is well known that $H^*(\widehat{V},\Z)$ is
the subspace of~$H^*(\widehat{V},\F_p)$ of those elements
killed by the Bockstein $\beta : H^*(\widehat{V}, \F_p) \to
H^{*+1}(\widehat{V}, \F_p)$.  Recall that in low degrees we have the
following identifications:
\begin{equation}\label{eq-low-degree1}
H^2(\widehat{V}, \Z) = H^1(\widehat{V}, \C^\times) =
\Hom(\widehat{V}, \C^\times) = \widehat{ \widehat{V} } = V \, .
\end{equation}
(The first identification comes from the exponential exact sequence
again.)  Also we have
\begin{equation}\label{eq-low-degree2}
H^1(\widehat{V}, \F_p) = \Hom(\widehat{V}, \F_p) =
\Hom(\widehat{V}, \C^\times) = \widehat{ \widehat{V} } = V \, .
\end{equation}
Let us pick a basis $(e_1, \ldots , e_p)$ of~$V$ such that $\sigma
(e_i) = e_{i+1}$ (taking indices modulo~$p$).  Each element~$e_i$
gives rise, via~\eqref{eq-low-degree1}, to an element of
$H^2(\widehat{V}, \F_p)$ which we still denote by~$e_i$. 
Via~\eqref{eq-low-degree2} it can also be identified with an element of
$H^1(\widehat{V}, \F_p)$, which we denote by~$u_i$ in the sequel.  The
full mod~$p$ cohomology ring is then
\[ H^*(\widehat{V}, \F_p) = \F_p[e_1, \ldots, e_{p}]\otimes \Lambda
(u_1, \ldots, u_{p}) \, .
\]
Therefore $H^3(\widehat{V}, \F_p)$ is isomorphic to $V\otimes V \oplus
\Lambda^3 V$ as an $\F_p[C]$-module, or equivalently as an
$\F_p[G]$-module. We wish to determine the kernel of the Bockstein
restricted to the subspace of $G$-invariant elements in this module :
this will be $H^3(\widehat{V}, \Z)^G = H^2(\widehat{V},
\C^\times)^G$. Since the Bockstein is visibly injective on the
$\Lambda^3 V$ summand (see details on the action below), we focus on
$V \otimes V$. Let
\[
v_k = \sum_{i=1}^p \, u_i \, e_{i+k} \, , 
\] 
for $0 \le k \le p-1$ (again the indices are to be understood
modulo~$p$).  It is easy to see that the elements~$v_k$ span the
subspace of fixed points.

The Bockstein is a derivation that vanishes on each~$e_i$ and $\beta
(u_i) = e_i$. Because the elements $e_i$ commute with one another, we
see that $\beta(v_k) = \beta(v_{-k})$. On the other hand the elements
$\beta(v_k)$ for $0 \le k \le (p-1)/ {2}$ are linearly
independent. Finally, the kernel of $\beta$, restricted to the module
of invariant elements, has dimension~$(p-1)/{2}$.  \epf

\subsection{The group $\Z/2 \wr \Z/2$}\label{dihedral}

We turn our attention to the case $p=2$.  
This will complete the series of wreath products and will
illustrate our method for groups of even order.  
Note that $G = \Z/2 \wr \Z/2$ is the \emph{dihedral group} of order~$8$.

\begin{prop} The group $\H^2_{\ell}(\Z/2\wr \Z/2)$ is trivial.
\end{prop}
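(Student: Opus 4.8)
The plan is to run the machine of Theorem~\ref{thm_theta} and compensate for the failure of surjectivity of~$\Theta$ in even order by a torsion-and-symmetry argument. Write $G=\Z/2\wr\Z/2$, the dihedral group of order~$8$, with generators $r$ of order~$4$ and $s$ of order~$2$ satisfying $srs=r^{-1}$. First I would establish that $\Int_k(G)=\Inn(G)$. Since $k$ is algebraically closed, by~\eqref{IntAut} it is enough to check $\Aut_{\rm c}(G)=\Inn(G)$. The conjugacy classes of~$G$ are $\{1\}$, $\{r^2\}$, $\{r,r^3\}$, $\{s,r^2s\}$, $\{rs,r^3s\}$, so a class-preserving automorphism fixes $1$ and $r^2$ and sends $r\mapsto r^{\pm1}$ and $s\mapsto s$ or $r^2s$; a direct check shows that each of these four possibilities is realised by conjugation, whence $\Aut_{\rm c}(G)=\Inn(G)$. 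By Corollary~\ref{coro2} the map $\Theta:\H^2_\ell(G)\to\B(G)$ is then injective, so the problem reduces to computing the image of~$\Theta$.

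Next I would determine $\B(G)$. By Proposition~\ref{nondeg_square} only abelian normal subgroups of symmetric type can carry a non-degenerate alternating form, and among the abelian normal subgroups of~$G$ — namely $\{1\}$, the centre $\langle r^2\rangle\cong\Z/2$, the cyclic group $\langle r\rangle\cong\Z/4$, and the two Klein four-groups $V_1=\langle r^2,s\rangle$ and $V_2=\langle r^2,rs\rangle$ — only $V_1$ and $V_2$ are of symmetric type. On each $\widehat{V_i}\cong(\Z/2)^2$ there is, since $\mathrm{char}\,k\neq2$, a unique non-degenerate alternating $k^\times$-valued form $b_i$; and because the image of~$G$ in $\GL_2(\F_2)$ automatically lies in $\mathrm{Sp}_2(\F_2)=\GL_2(\F_2)$, each $b_i$ is automatically $G$-invariant. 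Hence $\B(G)=\{1,(V_1,b_1),(V_2,b_2)\}$ has exactly three elements, and injectivity of~$\Theta$ already gives $|\H^2_\ell(G)|\le3$.

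The heart of the matter is to exclude the two non-trivial pairs from the image of~$\Theta$, and here I would exploit two features. First, $G$ has no multiplicities (its irreducible representations are four characters together with one $2$-dimensional representation~$W$, and $W\otimes W$ is the multiplicity-free sum of the four characters), so by Remark~\ref{rmk-torsion-applies} every invariant twist~$F$ commutes with~$R_F$ and Corollary~\ref{coro-torsion} applies. Since $\Int_k(G)/\Inn(G)=1$ forces $d=1$, the order of any class in~$\H^2_\ell(G)$ equals the order of its image $\Theta(F)$ in the colimit; as each $b_i$ takes values in $\{\pm1\}$ we have $(V_i,b_i)^2=1$, so every class of~$\H^2_\ell(G)$ has order dividing~$2$. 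Combined with $|\H^2_\ell(G)|\le3$ this leaves only $|\H^2_\ell(G)|\in\{1,2\}$. Second, the outer automorphism $\psi:r\mapsto r,\ s\mapsto rs$ of~$G$ interchanges $V_1$ and $V_2$, hence interchanges $(V_1,b_1)$ and $(V_2,b_2)$ in~$\B(G)$; since $\psi$ acts on $\H^2_\ell(G)$ compatibly with~$\Theta$, the image of~$\Theta$ is stable under~$\psi$ and therefore contains both or neither non-trivial pair. An image of size~$2$ would contain exactly one of them, which is impossible, so the image is $\{1\}$ and $\H^2_\ell(G)=1$.

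The main obstacle is precisely this last step: injectivity of~$\Theta$ alone permits $|\H^2_\ell(G)|$ to be as large as~$3$, and one must genuinely rule out both the two-element and the three-element image. The torsion bound from Corollary~\ref{coro-torsion}, which hinges on the no-multiplicity property and on $\Int_k(G)=\Inn(G)$, kills the three-element case, while the $\psi$-symmetry kills the two-element case. As an alternative to the symmetry argument one could instead try to show directly that the non-trivial class of $H^2(\widehat{V_i},k^\times)\cong\Z/2$ admits no $G$-invariant representing two-cocycle — this is the obstruction responsible for the failure of surjectivity of~$\Theta$ in even order — but the torsion-plus-symmetry route is shorter and uses only results already at hand.
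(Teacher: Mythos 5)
Your proof is correct and follows essentially the same route as the paper's: both establish injectivity of $\Theta$ from $\Aut_{\rm c}(G)=\Inn(G)$, identify the three elements of $\B(G)$ coming from the two Klein four-subgroups, and then combine the order-dividing-$2$ bound from the no-multiplicities property (Corollary~\ref{coro-torsion}) with the outer symmetry exchanging the two subgroups. The only difference is the order in which the last two arguments are applied (you use torsion to exclude order $3$ first, then symmetry to exclude order $2$; the paper does the reverse), which is immaterial.
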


\pf 
We keep the notation of the previous section and we observe that 
$\Aut_{\rm c}(G) = \Inn(G)$ as before.  It then follows from~\eqref{IntAut} 
and Theorem~\ref{thm_theta} that $\H^2_{\ell}(G)$ injects into~$\B(G)$.

The first difference with the odd~$p$ case appears when we look for
normal abelian subgroups: there are exactly two maximal such
subgroups, namely $E_1 = V = \langle e_1, e_2 \rangle$ 
and $E_2 = \langle \sigma , e_1 e_2 \rangle$ 
(the analogue of~$E_2$ for odd~$p$ is not normal).  
Both $E_1$ and $E_2$ are elementary abelian $2$-groups of rank~$2$.

Let $A = \Z/2\times \Z/2$ and let $(\widehat{e}_1 , \widehat{e}_2 )$ 
be a basis of~$\widehat{A}$.  We shall write the group law
on~$\widehat{A}$ multiplicatively. One can see directly that there is
only one non-trivial non-degenerate alternating bilinear form~$b$ on
this group with values in~$\C^\times$, namely the one determined by
\[ 
b(\widehat{e}_1, \widehat{e}_1) = b(\widehat{e}_2, \widehat{e}_2)  = 1
\quad\text{and}\quad 
b(\widehat{e}_1, \widehat{e}_2) = b(\widehat{e}_2, \widehat{e}_1) = -1 \, .
\] 
(In other words, this form is the determinant when the subgroup 
$\{ 1, -1\}$ of~$\C^\times$ is identified with~$\F_2$.) The form~$b$ is
evidently invariant under all automorphisms of~$\Z/2 \times \Z/2$.  
It follows that $\B(G)$ has exactly three elements.  
Theorem~\ref{thm_theta} then implies that $\H^2_\ell(G)$ has at most three elements.

Now the reader can check that $G$ has a well-defined automorphism
fixing~$e_1e_2$ and exchanging $e_2$ and~$\sigma$; it also exchanges
$E_1$ and~$E_2$.  From this, it follows that if the image of
$\H^2_{\ell}(G)$ in $\B(G)$ had more than one element, then it would
have all three elements. Let us show that this is impossible.

One checks easily that $G$ has no multiplicities in the sense of Section~\ref{sufficient}. 
According to Remark~\ref{rmk-torsion-applies}, we can apply Corollary~\ref{coro-torsion},
from which it follows that the elements of $\H^2_\ell(G)$ have order
dividing~$2$. This is impossible in a group of order~$3$, 
so that finally $\H^2_\ell(G)$ is trivial. 
\epf

This is an example where the map $\Theta$ is not surjective. It seems
instructive to spell out the reason why the argument used in the proof
of Theorem~\ref{thm_theta}\,(c) does not work here.

\begin{lem}\label{lem-no-F}
Let $\Z/2 = \langle \sigma \rangle$ act on 
$\widehat{A}= \Z/2 \times \Z/2 = \langle \widehat{e}_1, \widehat{e}_2 \rangle$ 
by $\sigma(\widehat{e}_1) = \widehat{e}_{2}$.
Then there is no $\sigma$-invariant normalized two-cocycle $c$ on~$A$
whose associated form is~$b$.
\end{lem}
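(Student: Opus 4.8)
The plan is to argue by contradiction, assuming that such a $\sigma$-invariant normalized two-cocycle $c$ on $\widehat A$ exists and deriving an inconsistency from its values on the single ordered pair $(\widehat e_1, \widehat e_2)$. First I would note that, since $\sigma$ generates $\Z/2$, the relation $\sigma(\widehat e_1) = \widehat e_2$ forces $\sigma(\widehat e_2) = \sigma^2(\widehat e_1) = \widehat e_1$, so the action simply transposes the two basis characters.

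Next I would confront the two constraints on $c$ at this pair. On the one hand, $\sigma$-invariance means $c(\sigma\rho, \sigma\tau) = c(\rho, \tau)$ for all $\rho, \tau \in \widehat A$; taking $\rho = \widehat e_1$ and $\tau = \widehat e_2$ gives $c(\widehat e_2, \widehat e_1) = c(\widehat e_1, \widehat e_2)$, so $c$ is symmetric on this pair. On the other hand, the form associated to $c$ is computed, as in Section~\ref{subsec_explicit_abelian}, by $b(\rho, \tau) = c(\tau, \rho)/c(\rho, \tau)$; requiring this to be the given form, with $b(\widehat e_1, \widehat e_2) = -1$, yields $c(\widehat e_2, \widehat e_1) = -\,c(\widehat e_1, \widehat e_2)$. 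Combining the two identities produces $c(\widehat e_1, \widehat e_2) = -\,c(\widehat e_1, \widehat e_2)$, which no nonzero complex number can satisfy. This contradiction completes the argument.

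The point worth emphasizing is that the obstruction here is structural rather than computational: invariance of $c$ forces its value on the transposed pair to be symmetric, while the non-triviality of $b$ (precisely the fact that $b(\widehat e_1, \widehat e_2) = -1 \neq 1$) forces it to be antisymmetric, and $\C^{\times}$ contains no element equal to its own negative. I expect the only genuinely delicate aspect to be conceptual, namely recognizing that neither the cocycle identity nor the normalization is actually used, so the failure cannot be repaired by changing representatives within the invariant class. This is exactly the contrast with the odd-order situation of Theorem~\ref{thm_theta}\,(c), where one constructs an invariant cocycle as the ``square root'' $c(\rho, \tau) = b(\rho/2, \tau)$; for $p = 2$ the halving map $\rho \mapsto \rho/2$ on $\widehat A$ does not exist, and the present lemma shows that this failure is intrinsic and accounts for $\Theta$ not being surjective in this example.
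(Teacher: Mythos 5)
Your argument is correct and is essentially identical to the paper's own proof: both derive $c(\widehat{e}_1,\widehat{e}_2)=-c(\widehat{e}_2,\widehat{e}_1)$ from $b(\widehat{e}_1,\widehat{e}_2)=-1$ and $c(\widehat{e}_1,\widehat{e}_2)=c(\widehat{e}_2,\widehat{e}_1)$ from $\sigma$-invariance, and conclude by the contradiction in $\C^\times$. Your added observations (that neither the cocycle identity nor normalization is needed, and the contrast with the odd-order halving construction) are accurate but not part of the paper's proof.
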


\begin{proof} 
Since $b(\widehat{e}_1,\widehat{e}_2)=-1$, we would have 
$c(\widehat{e}_1, \widehat{e}_2) = -c(\widehat{e}_2, \widehat{e}_1)$; 
by invariance of~$c$, we would also have 
$c(\widehat{e}_1, \widehat{e}_2) = c(\widehat{e}_2, \widehat{e}_1)$, 
which yields a contradiction.
\end{proof}

The reader will see that the lemma applies to $\widehat{E}_1$ and
$\widehat{E}_2$ under an appropriate choice of generators. 
It shows directly that the non-trivial elements in $\B(G)$ do not
correspond to invariant twists on the corresponding abelian
groups. The argument above is the only one we have in order to prove
that these elements of $\B(G)$ are not of the form~$\Theta(F)$.

\subsection{A group of order~$27$}

We turn to the example of 
$G=(\Z/3\times\Z/3)\rtimes\Z/3$,
where $C=\Z/3$ acts on~$V=(\Z/3)^2$ via a single Jordan block with eigenvalue~$1$. 
This group is somewhat intermediate between $\Z/3 \wr \Z/3$ and $\Z/2 \wr \Z/2$;
it provides an example of a group~$G$ 
whose abelian normal subgroups are \emph{not} contained in a maximal one
and at the same time has a non-trivial~$\H^2_\ell(G)$.

\begin{prop}
We have $\H^2_\ell(G) = \Z/3 \times \Z/3$. 
\end{prop}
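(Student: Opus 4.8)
The plan is to run the machinery of Theorem~\ref{thm_theta} and its corollaries. First I would fix notation: writing $V = \langle e_1, e_2\rangle$ and $C = \langle \sigma\rangle$, the single Jordan block gives $\sigma e_1 \sigma^{-1} = e_1$ and $\sigma e_2 \sigma^{-1} = e_1 e_2$, so $G$ is the extraspecial group of order $27$ and exponent $3$: the centre $Z = Z(G)$ is generated by $e_1$, the commutator subgroup equals $Z$, and $G/Z \cong \Z/3 \times \Z/3$. Consequently each non-central conjugacy class is a non-trivial coset of~$Z$.

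My first real task is to establish $\Int_k(G) = \Inn(G)$. Since $k$ is algebraically closed, by~\eqref{IntAut} it suffices to prove $\Aut_{\rm c}(G) = \Inn(G)$. Any class-preserving $\varphi$ satisfies $\varphi(g) \in gZ$ for all $g$, hence has the form $\varphi(g) = g\,z(g)$ with $z : G \to Z$; multiplicativity of $\varphi$ together with the centrality of~$Z$ forces $z$ to be a homomorphism, so $z \in \Hom(G/Z, Z)$. On the other hand, conjugation by $h$ gives $z_h(g) = [h,g]$, and since the commutator pairing $G/Z \times G/Z \to Z$ is non-degenerate, the assignment $h \mapsto z_h$ already exhausts all of $\Hom(G/Z, Z)$. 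Thus every class-preserving automorphism is inner.

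Because $G$ has odd order and $\Int_k(G) = \Inn(G)$, Corollary~\ref{coro2} shows that $\Theta : \H^2_{\ell}(G) \to \B(G)$ is a bijection of sets, so it remains to compute $\B(G)$ via the colimit description of Proposition~\ref{lem-bg-is-colimit}. The abelian normal subgroups of $G$ are $\{1\}$, the centre $Z$, and the four subgroups $A_1,\dots,A_4$ of order $9$ containing $Z$ (each of index $3$, hence normal, and of symmetric type $\Z/3 \times \Z/3$); any two of them meet exactly in~$Z$. For the cyclic group one has $H^2(\widehat Z, k^\times) = 0$, while $H^2(\widehat{A_i}, k^\times) \cong \Hom(\Lambda^2 \widehat{A_i}, k^\times) \cong \Z/3$, and since $G$ acts on $A_i$ through $\SL$ (an order-$3$ element of $\GL_2(\F_3)$ is unipotent, of determinant~$1$) it fixes $\Lambda^2 \widehat{A_i}$, giving $H^2(\widehat{A_i}, k^\times)^G \cong \Z/3$ in full. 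The crucial observation — this is where the hypothesis that the abelian normal subgroups are \emph{not} all contained in a single maximal one really bites — is that in the colimit the four copies of $\Z/3$ can only be identified through a common subgroup contained in both $A_i$ and $A_j$, necessarily inside $A_i \cap A_j = Z$, which carries trivial cohomology. Hence the colimit is the wedge of four copies of $\Z/3$ along their trivial elements, so $|\B(G)| = 1 + 4\cdot 2 = 9$ and therefore $|\H^2_{\ell}(G)| = 9$.

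Finally, a group of order $9$ is abelian, so I only need to rule out $\Z/9$ by showing the exponent is~$3$. By Theorem~\ref{thm_theta}\,(c), since $G$ has odd order every class is represented by a twist $F$ supported by its socle; by Remark~\ref{rmk-odd-implies-torsion} such an $F$ commutes with $R_F$, so Corollary~\ref{coro-torsion} applies and the order of $F$ equals the order of $\Theta(F)$ times a divisor of $\operatorname{card}(\Int_k(G)/\Inn(G)) = 1$. As $\Theta(F)$ has order dividing $3$ in some $H^2(\widehat{A_i}, k^\times)^G \cong \Z/3$, every element of $\H^2_{\ell}(G)$ has order dividing~$3$; a group of order $9$ and exponent $3$ is $\Z/3 \times \Z/3$. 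I expect the main obstacle to be the colimit computation of the third paragraph, where one must argue carefully that the four $\Z/3$'s do not collapse any further, which hinges precisely on the vanishing of $H^2(\widehat Z, k^\times)$ for the common intersection~$Z$.
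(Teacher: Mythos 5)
Your proof is correct and follows essentially the same route as the paper's: identify the four abelian normal subgroups of order~$9$, use the determinant-preservation of the $G$-action to get $|\B(G)|=9$, check $\Int_k(G)=\Inn(G)$ so that $\Theta$ is a bijection, and invoke Corollary~\ref{coro-torsion} via Remark~\ref{rmk-odd-implies-torsion} to bound the exponent by~$3$. The only differences are in presentation: you route the count of $\B(G)$ through the colimit description of Proposition~\ref{lem-bg-is-colimit} rather than counting pairs $(A,b)$ directly, and you spell out the commutator-pairing argument for $\Aut_{\rm c}(G)=\Inn(G)$, which the paper leaves as an easy check.
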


\begin{proof}

Write $c$ for the generator of~$C$ and $(e_1, e_2)$ for a basis of~$V$
such that the action of~$c$ is given by a Jordan block. The element
$e_1$ is central, and from $ce_2 c^{-1} = e_1 e_2$ we deduce that any map
from $G$ to an abelian group has $e_1$ in its kernel. In particular,
$e_1$ belongs to all the normal subgroups of $G$. Thus we can read off
these normal subgroups of $G$ from the subgroups of $G/\langle e_1
\rangle$, which is an elementary abelian $3$-group of order~$9$.

In this way we find exactly four abelian normal subgroups of~$G$ of
square order, say~$E_i$, $1 \le i \le 4$: 
they are all elementary abelian $3$-groups of order~$9$, 
a basis being given in each case respectively by
$(e_1,e_2)$, $(e_1, c)$, $(e_1, e_2c)$, and~$(e_1, e_2c^2)$. 
In each case, we can view~$E_i$ as an $\F_3$-vector space of dimension~$2$, 
and $e_1$ is an eigenvector for the action of~$G$ with eigenvalue~$1$. 
One checks in all four cases that there are no other eigenvectors, 
or equivalently that the action of~$G$ factors through a copy of~$\Z/3$ 
acting via a single Jordan block with eigenvalue~$1$. 
In particular, the action preserves the determinant. 
It is easy to see that the same can be said of the
action of $G$ on the Pontryagin duals $\widehat{E}_i$.

As it turns out, the only alternating bilinear forms on~$\F_3^2$ are
proportional to the determinant (here we identify the group of third roots
of unity in~$\C^\times$ with~$\F_3$).  Therefore, on each~$\widehat{E}_i$
we have precisely two $G$-invariant non-degenerate alternating bilinear forms, 
namely the determinant and its opposite. In total, $\B(G)$ has
nine elements, taking the trivial one into account.

It is easy to check that $\Int_k(G) = \Inn(G)$. It then follows
from Theorem \ref{thm_theta} that $\H^2_\ell(G)$ has order~$9$. 
Moreover, Corollary~\ref{coro-torsion} also applies 
(see Remark~\ref{rmk-odd-implies-torsion})
and we deduce that the elements of~$\H^2_\ell(G)$ have order dividing~$3$. 
The result follows.
\end{proof}

\subsection{The alternating group $A_4$}\label{subsec-twists-of-A4}

This example is similar to $\Z/2 \wr \Z/2$, as the group $A_4$ is a
semi-direct product $V \rtimes C$, where as above $V$ is a copy of
$\Z/2 \times \Z/2$ generated by the permutations $e_1= (1,2)(3,4)$ and $e_2= (1,3)(2,4)$, 
while $C$ is a copy of $\Z/3$ generated by the permutation~$\sigma= (1,2,3)$. 
However, we now have the following result (where $b$ is as in Section~\ref{dihedral}).

\begin{lem}\label{inv_cocycle}
Let $\Z/3 = \langle \sigma \rangle$ act on $\widehat{V}=
\Z/2 \times \Z/2 = \langle \widehat{e}_1, \widehat{e}_2 \rangle$ by
$\sigma(\widehat{e}_1) = \widehat{e}_2$ and $\sigma(\widehat{e}_2) =
\widehat{e}_1\widehat{e}_2$. 
Then there does exist a $\sigma$-invariant normalized two-cocycle~$c$ 
on~$\widehat{A}$ whose associated form is~$b$.
\end{lem}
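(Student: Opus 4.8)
The plan is to produce the desired cocycle by averaging an arbitrary one over the $\sigma$-orbit, exploiting crucially that $\sigma$ has \emph{odd} order. Recall from Section~\ref{subsec_explicit_abelian} that sending a normalized two-cocycle $c$ to its symmetry default $b_c(\rho,\tau) = c(\tau,\rho)/c(\rho,\tau)$ induces the isomorphism between $H^2(\widehat V, k^\times)$ and the group of alternating forms; in particular this assignment is surjective onto the alternating forms and is a homomorphism with respect to the pointwise product of cocycles. I would therefore begin by choosing any normalized two-cocycle $c_0$ on $\widehat V$ whose associated form is $b$ — for instance a suitable bilinear form, or simply any preimage of $b$ under the surjection just recalled.

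Next I would set
\[
c(g,h) = c_0(g,h)\, c_0(\sigma g,\sigma h)\, c_0(\sigma^2 g,\sigma^2 h)
\]
for $g,h\in\widehat V$. Each factor $c_0(\sigma^i g,\sigma^i h)$ is again a normalized two-cocycle, being the pullback of $c_0$ along the automorphism $\sigma^i$; hence $c$, as a pointwise product in the abelian group of cocycles, is itself a normalized two-cocycle. Its $\sigma$-invariance is immediate from $\sigma^3=\id$: replacing $(g,h)$ by $(\sigma g,\sigma h)$ merely cyclically permutes the three factors, so that $c(\sigma g,\sigma h)=c(g,h)$.

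It remains to identify the associated form of $c$. Since $c\mapsto b_c$ is multiplicative, and since pulling a cocycle back along $\sigma^i$ pulls its form back along $\sigma^i$ as well, I would compute
\[
b_c(g,h) = b(g,h)\, b(\sigma g,\sigma h)\, b(\sigma^2 g,\sigma^2 h).
\]
By the remark in Section~\ref{dihedral} that $b$ is invariant under every automorphism of $\Z/2\times\Z/2$, each factor equals $b(g,h)$, so $b_c=b^3$; and as $b$ takes values in $\{\pm1\}$ we have $b^2=1$, whence $b_c=b$, as required.

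The one genuinely delicate point — and precisely the reason the lemma holds here while Lemma~\ref{lem-no-F} fails — is matching the averaged form to $b$ itself rather than to some power of it. Averaging over an orbit of size $n$ multiplies the form by itself $n$ times, and $b^n=b$ holds exactly when $n$ is odd, as it is for $|\sigma|=3$; for the order-two action of the dihedral case the same construction would only yield $b^2=1$, which is exactly the obstruction exhibited there.
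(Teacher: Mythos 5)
Your proof is correct, and it takes a genuinely different route from the one in the paper. The paper's proof is a bare-hands construction: it fixes the two free parameters $\lambda = c(\widehat{e}_1,\widehat{e}_1)=-1$ and $\mu = c(\widehat{e}_1,\widehat{e}_2)=1$, lets normalization, $\sigma$-invariance and the symmetry condition $c(y,x)=b(x,y)\,c(x,y)$ dictate the remaining values, and then verifies the cocycle identity by computer. Your argument instead averages an arbitrary normalized cocycle $c_0$ with $b_{c_0}=b$ over the $\sigma$-orbit; every step (pullback along an automorphism and pointwise product preserve normalized cocycles, $c\mapsto b_c$ is multiplicative and commutes with pullback, $b$ is $\Aut$-invariant, and $b^3=b$ since $b$ takes values in $\{\pm 1\}$) is sound, and the existence of $c_0$ is guaranteed by the isomorphism $H^2(\widehat V,k^\times)\cong\Hom(\Lambda^2\widehat V,k^\times)$ recalled in Section~\ref{subsec_explicit_abelian} (or by taking $c_0$ to be an explicit bilinear form inducing $b$). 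What your approach buys is conceptual transparency and generality: it eliminates the computer check, and it isolates exactly why the parity of the acting group matters, since averaging over an orbit of odd size $n$ returns $b^n=b$ while for the order-two action of Lemma~\ref{lem-no-F} it can only produce $b^2=1$ --- which is precisely the obstruction the paper exhibits there. The paper's construction, on the other hand, has the minor advantage of producing the completely explicit cocycle that is then fed into the formula for the twist $F$ in the proof of Proposition~\ref{prop-a4}; your $c$ would serve equally well for that purpose once $c_0$ is chosen explicitly.
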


\begin{proof} We only need to set the value of
$\lambda = c(\widehat{e}_1, \widehat{e}_1)$, and 
$\mu =  c(\widehat{e}_1, \widehat{e}_2)$. 
All other values of~$c$ are dictated by the requirement that it be normalized, 
$\sigma$-invariant, and subject to the condition $c(y,x) = b(x,y)\, c(x,y)$. 
We proceed by inspection and set $\lambda = -1$, $\mu = 1$. 
We checked (with the help of a computer) that $c$ is indeed a two-cocycle.
\end{proof}

\begin{prop}\label{prop-a4} The group $\H^2_\ell(A_4)$ has order two.
\end{prop}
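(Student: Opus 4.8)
The plan is to run the map $\Theta$ of Theorem~\ref{thm_theta} and pin down $\H^2_\ell(A_4)$ by squeezing it between an upper bound coming from injectivity and a lower bound coming from an explicit twist. Concretely, I will first show $\Int_k(A_4)=\Inn(A_4)$ so that $\Theta$ is injective, then compute $\B(A_4)$ to get $|\H^2_\ell(A_4)|\le 2$, and finally use Lemma~\ref{inv_cocycle} to exhibit a twist realizing the non-trivial element of $\B(A_4)$.

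First I would determine $\Int_k(A_4)/\Inn(A_4)$. As $k$ is algebraically closed, by~\eqref{IntAut} it suffices to compute $\Aut_{\rm c}(A_4)$. The automorphism group of $A_4$ is $S_4$ acting by conjugation, with $\Inn(A_4)=A_4$ of index two; a representative of the outer class is conjugation by a transposition. Such a conjugation interchanges the two conjugacy classes of $3$-cycles of $A_4$ (for instance it sends $(1,2,3)$ to $(1,3,2)$), hence is not class-preserving. Therefore $\Aut_{\rm c}(A_4)=\Inn(A_4)$, so $\Int_k(A_4)=\Inn(A_4)$, and by Corollary~\ref{coro2} the map $\Theta:\H^2_\ell(A_4)\to\B(A_4)$ is injective.

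Next I would compute $\B(A_4)$. The normal subgroups of $A_4$ are $\{1\}$, $V$, and $A_4$, of which only $\{1\}$ and $V$ are abelian; the trivial subgroup supplies the base point. For $A=V\cong\Z/2\times\Z/2$, as recorded in Section~\ref{dihedral} there is a unique non-trivial non-degenerate alternating bilinear form $b$ on $\widehat{V}$, and since it is the only such form it is automatically preserved by the $G$-action. Thus $\B(A_4)=\{1,(V,b)\}$ has exactly two elements, and the injectivity of $\Theta$ gives $|\H^2_\ell(A_4)|\le 2$.

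The surjectivity of $\Theta$ is the main obstacle, and I expect it to be the crux: since $A_4$ has even order, Theorem~\ref{thm_theta}\,(c) does not apply, and the naive construction of an invariant twist fails exactly as it did for $\Z/2\wr\Z/2$ in Lemma~\ref{lem-no-F}. The decisive input is Lemma~\ref{inv_cocycle}, which provides a $\sigma$-invariant normalized two-cocycle $c$ on $\widehat{V}$ whose associated form is $b$. Following the recipe of Section~\ref{subsec_explicit_abelian}, $c$ determines an element $F\in k[V]\otimes k[V]$; because $V$ is abelian this $F$ is automatically a twist, and because $V$ acts trivially on itself by conjugation while the $\sigma$-invariance of $c$ accounts for the remaining invariance, $F$ is $G$-invariant, hence an invariant twist on $k[A_4]$ supported by $V$. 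By construction $R_F=R(V,b)$, so $\Theta(F)=(V,b)$. Thus $(V,b)$ lies in the image of $\Theta$, the map $\Theta$ is a bijection of sets, and $|\H^2_\ell(A_4)|=|\B(A_4)|=2$.
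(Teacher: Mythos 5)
Your proof is correct and follows essentially the same route as the paper: injectivity of $\Theta$ from $\Int_k(A_4)=\Inn(A_4)$, the count $|\B(A_4)|=2$, and the realization of $(V,b)$ via the invariant cocycle of Lemma~\ref{inv_cocycle}. Your treatment of the first step is in fact slightly more careful than the paper's, which asserts that all automorphisms of $A_4$ are inner (false, since $\Out(A_4)\cong\Z/2$); what is true, and what you prove, is that the class-preserving ones are inner, which is all that is needed.
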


\begin{proof} It is an elementary exercise to prove that all
automorphisms of~$A_4$ are inner, so that $\Theta$ is injective. 
The subgroup~$V$ is the only abelian normal subgroup, and again we see
that $\B(G)$ has precisely two elements, the non-trivial one being
$(V,b)$ with~$b$ as above. Thus $\H^2_\ell(G)$ has at most two elements.

A non-trivial twist~$F$ is afforded by Lemma~\ref{inv_cocycle}. 
Indeed, applying the recipe of Section~\ref{subsec_explicit_abelian},
assuming that $\widehat{e}_i$ is taken to be the character of~$V$ 
whose kernel is generated by~$e_i$, 
and writing $e_3 = e_1e_2$, the cocycle~$c$ exhibited in the proof of the lemma yields
\begin{eqnarray*}
4F 
& = & 1 \otimes 1 - (e_1 \otimes e_1 +  e_2 \otimes e_2 + e_3 \otimes e_3) \\
&& +  (1 \otimes e_1 +  e_1 \otimes 1) + 
(1 \otimes e_2 +  e_2 \otimes 1) + (1 \otimes e_3 +  e_3 \otimes 1) \\
&& +  (e_1 \otimes e_2 - e_2 \otimes e_1) 
+ ( e_2 \otimes e_3 -  e_3 \otimes e_2)
+ (e_3 \otimes e_1-  e_1 \otimes e_3) \, .
\end{eqnarray*}
By construction $F$ is $A_4$-invariant and maps to~$(V,b)$ under the map~$\Theta$.
\end{proof}

\subsection{A group considered by G. E. Wall}\label{Wall}

Our final example will be the group~$G$ generated by three generators
$s$, $t$, $u$ subject to the relations
\[
s^2=t^2=u^8= 1 \, , \quad st=ts \, , \quad sus^{-1} = u^3 \, , \quad 
tut^{-1} = u^5 \, .
\] 
It is isomorphic to the semi-direct product $\Z/8 \rtimes \Aut(\Z/8)$.

This group was considered by Wall \cite{Wal}, who proved that 
$\Aut_c(G) / \Inn(G) = \ZZ/2$, 
the non-trivial element being represented by 
the automorphism~$\alpha $ 
given by $\alpha (s) = u^4s$, $\alpha(t) = u^4t$, and $\alpha(u) = u$.

By~\eqref{IntAut} the automorphism~$\alpha$ is the conjugation
by an element $a$ of the normalizer~$N_k(G)$ of~$G$ in~$k[G]^{\times}$.
A computer search for~$a$ yielded the element 
\[
a = \frac{1}{2} \, (1 + u^4) + \frac{\sqrt 2}{4} \, u \, (1 - u^2 - u^4 + u^6) \, .
\]
One checks directly that $a^2 = 1$ (so~$a$ is invertible) and
\[
as =u^4s a \, , \quad at=u^4ta \, , \quad au = ua \, , 
\]
so that conjugation by~$a$ does indeed induce the automorphism~$\alpha $.

It follows from Proposition~\ref{prop-Hgaugequiv} that $\H^2_\ell(G)$
has a subgroup of order~$2$, generated by $F = (a\otimes a) \, \Delta(a^{-1})$.
The symmetric invariant twist~$F$ is determined by
\begin{eqnarray*}
8 F \!\!\!\!\!\!\!\!\!
&&  \!\! 
= 2 \, (u_{00} + u_{44}) + (u_{11} + u_{33} + u_{55} + u_{77}) \\
&& {} + u_{01} + u_{03} + u_{04} + u_{05} + u_{07}
+ u_{12} + u_{17} + u_{25} + u_{35} + u_{36} + u_{67} \\
&& {} + u_{10} + u_{30} + u_{40} + u_{50} + u_{70}
+ u_{21} + u_{71} + u_{52} + u_{53} + u_{63} + u_{76} \\
&& {} - (u_{13} + u_{14} + u_{15} + u_{16} + u_{23} + u_{27}
+ u_{34} + u_{37} + u_{45} + u_{47} + u_{56} + u_{57}) \\
&& {} - (u_{31} + u_{41} + u_{51} + u_{61} + u_{32} + u_{72}
+ u_{43} + u_{73} + u_{54} + u_{74} + u_{65} + u_{75}) \, ,
\end{eqnarray*}
where we set $u_{ij} = u^i \otimes u^j$ ($i,j \in \{0, 1, \ldots, 7\}$).

Let us now describe~$\B(G)$. The group $G$ has only one abelian
normal subgroup~$A$ of square order, 
namely the copy of $\Z/2\times \Z/2$ generated by~$t$ and~$u^4$. 
On~$\widehat{A}$ there is only one non-degenerate alternating bilinear form~$b$, 
and it is invariant under all automorphisms of~$\widehat{A}$, 
as already observed a couple of times. 
As a result, $\B(G)$ has exactly two elements (counting the trivial one).

It follows from Theorem \ref{thm_theta} and from $\Int_k(G)/\Inn(G) = \ZZ/2$
that $\H^2_\ell(G)$ has order~$4$ or~$2$
according as $\Theta$ is surjective or not. 
In this instance we were not able to conclude.  We can at least observe that
the action of~$G$ on~$A$ is such that Lemma~\ref{lem-no-F} applies,
from which we deduce that there is no invariant twist $F\in k[A]\otimes k[A]$ 
such that $\Theta(F) = (A,b)$. However, we do not know whether
there is an $F\in k[G]\otimes k[G]$ with this property.

\section*{Acknowledgements}

The present work is part of the project ANR BLAN07-3$_-$183390 
``Groupes quantiques~: techniques galoisiennes et d'int\'egration" funded
by Agence Nationale de la Recherche, France.

We are grateful to Eli Aljadeff and Julien Bichon for several useful discussions.

One of the starting points of this work was a series of computer
calculations, giving in particular $\H^2_\ell(A_4) = \ZZ/2$ and
$\H^2_\ell(Q_8) = 1$ directly from the definition. Computers have also
proved useful on several other occasions. We have relied exclusively
on the open-source SAGE Computer Algebra Package, and would like to
thank its creators for keeping this wonderful software freely
available.

\end{document}